\newcommand{\sep}{\: \mid \: }
\newcommand{\N}{{\ensuremath{\mathbb N}}}
\newcommand{\E}{{\ensuremath{\mathbb E}}}
\newcommand{\Pro}{{\ensuremath{\mathbb P}}}
\newcommand{\R}{{\ensuremath{\mathbb R}}}
\newcommand{\D}{\nabla}
\newcommand{\ub}[1]{^{(#1)}}
\def\rd{{\bf d}}
\newcommand{\uu}[1]{^{\underline{#1}}}
\newcommand{\lv}{\left\lVert}
\newcommand{\rv}{\right\rVert}
\newcommand{\1}{\mathbf{1}}
\newcommand{\bg}{\bigotimes}
\newcommand{\eps}{\varepsilon}
\newcommand{\ii}{\mathbf{i}}
\newcommand{\ai}{a_\ii}
\newcommand{\pp}{\mathcal{P}}
\newcommand{\nor}[1]{\lv #1 \rv_{\mathcal{P}'\sep \mathcal{P} }}
\newcommand{\ind}[1]{{i_{I_{#1}}}}
\newcommand{\x}[1]{x^1_\ind{1} \cdots x^{#1}_\ind{#1}}
\newcommand{\al}[1]{\alpha_A \left( #1 \right)}
\newcommand{\bb}[1]{\beta_A \left(#1 \right)}
\newcommand{\bt}{\bigotimes}
\newcommand{\ten}{\otimes}
\newcommand{\aaa}{\alpha}
\newcommand{\be}{\beta}
\newcommand{\los}[2]{\bt_{k=#1}^{#2} \left(x^k (1-\1_I(k))+G^k\1_I(k) \right)}
\newcommand{\rh}{\rho^{\textbf{y},I}_A}
\newcommand{\sr}{\Delta^{\textbf{y},I}_A}
\newcommand{\xx}{\textup{\bf x}}
\newcommand{\xxx}{\tilde{\textbf{x}}}
\newcommand{\ff}{F^{\textbf{y},I}_A}
\newcommand{\y}{\textbf{y}}
\newcommand{\dd}{\hat{\Delta}}
\newcommand{\ha}[1]{\hat{\aaa}_A\left( #1 \right)}
\newcommand{\haa}{\hat{\aaa}_A}
\newcommand{\pg}{g^1_{i_1}\cdots g^d_{i_d}}
\newcommand{\altnor}[1]{| \! | \! | #1 | \! | \! |_{\pp}}
\newcommand{\robn}[1]{| \! | \! | #1 | \! | \! |}
\newcommand{\id}{{i_1,\ldots,i_d}}
\newcommand{\iid}{{i_1,\ldots,i_{d+1}}}
\newcommand{\gdd}{g^1_{i_1}\cdots g^d_{i_d}}
\newcommand{\rhp}{\hat{\rho}_A}
\newcommand{\mm}{\mathcal{M}}
\newcommand{\ccc}{\mathcal{C}}
\newcommand{\robnn}[1]{\left\langle #1 \right\rangle }
\newcommand{\rr}{\mathcal{R}}
\newtheorem{twie}{Theorem}[section]
\newtheorem{twr}[twie]{Theorem}
\newtheorem{lem}[twie]{Lemma}
\newtheorem{prep}[twie]{Proposition}
\newtheorem{prop}[twie]{Proposition}
\newtheorem{ex}[twie]{Example}
\newtheorem{cor}[twie]{Corollary}
\newtheorem{rem}[twie]{Remark}
\newtheorem{con}[twie]{Conjecture}
\begin{document}

\title{Moments of Gaussian chaoses in Banach spaces}
\author{Radosław Adamczak, Rafał Latała, Rafał Meller}
\date{}
\address{Institute of Mathematics, University of Warsaw, Banacha 2, 02--097 Warsaw, Poland.}
\email{r.adamczak@mimuw.edu.pl, r.latala@mimuw.edu.pl, r.meller@mimuw.edu.pl}
\thanks{ The authors were supported by National Science Centre, Poland grants 2015/18/E/ST1/00214 (R.A.),
2015/18/A/ST1/00553 (R.L.) and 2018/29/N/ST1/00454 (R.M.)}

\maketitle

\begin{abstract}
We derive moment and tail estimates for Gaussian chaoses  of arbitrary order with values in Banach spaces. We formulate a conjecture regarding two-sided estimates and show that it holds in a certain class of Banach spaces including $L_q$ spaces. As a
corollary we obtain two-sided bounds for moments of chaoses with values in $L_q$ spaces based on exponential random variables.
\end{abstract}

\section{Introduction}

Multivariate polynomials in Gaussian variables have been extensively studied at least since the work of Wiener in the 1930s. They have found numerous applications in the theory of stochastic integration and Malliavin calculus \cite{JansonGHS,NourdinPeccati,Nualart}, functional analysis \cite{Hytonen}, limit theory for $U$-statistics \cite{7} or long-range dependent processes \cite{Taqqu}, random graph theory \cite{JansonGHS}, and more recently computer science \cite{DeServedio,LarsenNelson,MOR,ODonnell}. While early results considered mostly polynomials with real coefficients, their vector-valued counterparts also appear naturally, e.g.,  in the context of stochastic integration in Banach spaces \cite{vNW}, in the study of weak limits of U-processes \cite{7}, as tools in characterization of various geometric properties of Banach spaces \cite{Hytonen,Pisier,PisierVol} or in analysis of empirical covariance operators \cite{A,Wahl}. Apart from applications, the theory of Gaussian polynomials has been studied for its rich intrinsic structure, with interesting interplay of analytic, probabilistic, algebraic and combinatorial phenomena, leading to many challenging problems. For a comprehensive presentation of diverse aspects of the theory we refer to the monographs \cite{7,Hytonen,JansonGHS,15}.

An important aspect of the study of Gaussian polynomials is the order of their tail decay and growth of moments. In the real valued case the first estimates concerning this question, related to the hypercontractivity of the Ornstein-Uhlenbeck semigroup,  were obtained by Nelson \cite{Nelson}. For homogeneous tetrahedral (i.e., affine in each variable) forms of arbitrary fixed degree two-sided estimates on the tails and moments were obtained in \cite{Latgaus} (in particular generalizing the well-known Hanson-Wright inequality for quadratic forms). In \cite{AW} it was shown that the results of \cite{Latgaus} in fact allow to obtain such estimates for all polynomials of degree bounded from above. Two sided estimates for polynomials with values in a Banach space have been obtained independently by  Borell \cite{5}, Ledoux \cite{14}, Arcones-Gin\'e \cite{3}. They are expressed in terms of suprema of certain empirical processes (see formula \eqref{a1} below), which in general may be difficult to estimate (even in the real valued case).

In a recent paper \cite{wek} we considered Gaussian quadratic forms with coefficients in a Banach space and obtained upper bounds on their tails and moments, expressed in terms of quantities which are easier to deal with. In the real valued case our estimates reduce to the Hanson-Wright inequality, and for a large class of Banach-spaces (related to Pisier's Gaussian property $\alpha$ and containing all type 2 spaces) they may be reversed. In particular for $L_q$ spaces with $q < \infty$ they yield two-sided estimates expressed in terms of deterministic quantities. In the present work we generalize these estimates to polynomials of arbitrary degree.

Before presenting our main theorems (which requires an introduction of a rather involved notation) let us describe the setting and discuss in more detail some of the results mentioned above.

To this aim consider a Banach space $(F,\lv \cdot \rv)$. A (homogeneous, tetrahedral) $F$-valued Gaussian chaos of order $d$ is a random variable defined as
\begin{equation}\label{eq:hom-tet-chaos}
S=\sum_{1\le i_1< i_2\ldots< i_d\le n} a_{\id} g_{i_1}\cdots g_{i_d},
\end{equation}
where $a_\id \in F$ and $g_1,\ldots,g_n$ are i.i.d.standard Gaussian variables. As explained above the goal of this paper is to derive estimates on moments (defined as $\lv S \rv_p:=(\E \lv S \rv^p)^{1/p}$) and tails of $S$, more precisely to establish upper bounds which for some classes of Banach paces, including $L_q$ spaces, can be reversed (up to constants depending only on $d$ and the Banach space, but not on $n$ or $a_\id$). We restrict here to random variables of the form \eqref{eq:hom-tet-chaos}, however it turns out that estimates on their moments will in fact allow to deduce moment and tail bounds for arbitrary polynomials in Gaussian random variables as well as for homogeneous tetrahedral polynomials in i.i.d. symmetric exponential random variables.

In the sequel  we will consider mainly decoupled chaoses
\[
S'=\sum_{i_1,\ldots, i_d} a_{i_1,\ldots,i_d} g^1_{i_1}\cdots g^d_{i_d},
\]
where $(g^k_i)_{i,k \in \N}$ are independent $\mathcal{N}(0,1)$ random variables -- under natural
symmetry assumptions, moments and tails of $S,S'$ are comparable up to constants depending only on $d$ (cf. \cite{pen,kwa}).

For $d=1$ and any $p\geq 1$ an easy application of Gaussian concentration (see, e.g. \cite{Ta_book}) and integration by parts gives
\begin{align}\label{eq:d=1}
\lv \sum_i a_i g_i \rv_p
&=\left( \E \lv \sum_i a_i g_i \rv^p \right)^{1/p}\sim \E \lv \sum_i a_i g_i \rv+\sup_{\varphi \in F^*, \lv \varphi \rv \leq 1} \lv \sum_i \varphi(a_i)g_i \rv_p\\
&\sim \E \lv \sum_i a_i g_i \rv+\sqrt{p} \sup_{x\in B^n_2} \lv \sum_i a_i x_i \rv,\nonumber
\end{align}
where $F^*$ is the dual space and $\sim$ stands for a comparison up to universal multiplicative constants.

An iteration of the above inequality yields for chaoses of order $2$,
\begin{align}\label{a12}
\lv \sum_{i,j}a_{ij}g_i g'_j \rv_p\sim&
\E \lv \sum_{i,j}a_{ij}g_i g'_j \rv+\sqrt{p} \E \sup_{x \in B^n_2} \left( \lv \sum_{i,j} a_{ij} g_i x_j \rv+\lv \sum_{i,j} a_{ij} x_i g_j \rv \right)\\
&+p\sup_{x,y \in B^n_2} \lv \sum_{i,j} a_{ij}x_i y_j \rv. \nonumber
\end{align}
For chaoses of higher order one gets an estimate
\begin{align}\label{a1}
\lv \sum_{i_1,\ldots,i_d} a_\id \gdd \rv_p \sim^d  \sum_{J\subset [d]} p^{d/2} \E \sup \lv \sum_{i_1,\ldots,i_d} a_\id \prod_{j \in J} x^j_{i_j} \prod_{j \in [d]\setminus J} g^j_{i_j} \rv,
\end{align}
where  the supremum is taken over $x^1,\ldots,x^n$ from the Euclidean unit sphere and $\sim^a$ stands for comparison up to constants depending only on the parameter $a$. To the best of our knowledge the above inequality was for the first time established in \cite{5} and subsequently reproved in various context by several authors \cite{3,14,15}.

The estimate \eqref{a1} gives precise dependence on $p$, but unfortunately is expressed in terms of expected  suprema of certain stochastic processes, which are hard to estimate. In many situations this precludes effective applications.
Let us note that even for $d=1$, the  estimate \eqref{eq:d=1} involves the expectation of a norm of a Gaussian random vector. Estimating such a quantity in general Banach spaces is a difficult task, which requires investigating the geometry of the unit ball of $F^\ast$ (as described by the celebrated majorizing measure theorem due to Fernique and Talagrand). Therefore, in general one cannot hope to get rid of certain expectations in the estimates for moments. Nevertheless, in certain classes of Banach spaces (such as, e.g., Hilbert spaces, or more generally type 2 spaces) expectations of Gaussian chaoses can be easily estimated.
The difficult part (also for $d=2$ and mentioned class of Banach spaces) is to estimate the terms in \eqref{a12} and \eqref{a1} which involve additional suprema over products of unit balls.  Even for $d=2$ and  a Hilbert space, the term
$\E \sup_{x \in B^n_2} \lv \sum_{i,j} a_{ij} g_i x_j \rv$ can be equivalently rewritten as the expected operator norm of a certain random matrix. Such quantities are known to be hard to estimate. Therefore, it is natural to seek inequalities which are expressed in terms of deterministic quantities and expectations of some $F$-valued polynomial chaoses, but do not involve expectations of additional suprema of such polynomials.
This was the motivation behind the article \cite{wek}, concerning the case $d=2$ and containing the following bound, valid for  $p \ge 1$ (\cite[Theorem 4]{wek}),
\begin{align*}
\lv \sum_{i,j} a_{i,j} g_i g'_j \rv_p
\leq & C \Bigg(\E \lv \sum_{i,j} a_{i,j} g_i g'_j \rv + \E \lv \sum_{i,j} a_{i,j}g_{ij} \rv \\
&+p^{1/2}\sup_{x \in B^n_2}\E   \lv \sum_{i,j} a_{ij} g_i x_j \rv
+p^{1/2}\sup_{x \in B^{n^2}_2} \lv \sum_{i,j} a_{ij} x_{ij}\rv \\&
+p \sup_{x,y \in B^n_2} \lv \sum_{i,j} a_{ij} x_i y_j \rv\Bigg).
\end{align*}

It can  be shown that in general this inequality cannot be reversed.  Hovewer, it turns out to be two-sided in a certain class of Banach spaces containing $L_q$ spaces (see Section \ref{sec:two-sided} below).
This observation gives rise  to the question of obtaining similar results for arbitrary $d$.
Building on ideas and techniques developed in \cite{Latgaus} we are able to give an answer to it (Theorem \ref{thm:gl}). Similarly as in \cite{wek} the heart of the problem is to estimate the expected  supremum of a certain Gaussian process indexed by a product set.

\medskip

The paper is organized as follows. In the next section we set up the notation and formulate the main results, in particular the pivotal bound for moments of homogeneous tetrahedral Gaussian chaoses in arbitrar Banach space (Theorem \ref{thm:gl}). We also present its consequences: tail and moment estimates for arbitrary Gaussian polynomials, two-sided bounds in special classes of Banach spaces, inequalities for tetrahedral homogeneous forms in i.i.d. symmetric exponential variables.
In Section \ref{sek3}, in Theorem \ref{maintheorem}, we formulate a key inequality for the supremum of a certain Gaussian processes and derive certain entropy bounds to be used in its proof, presented in Section \ref{sek4}. In Section \ref{sek5} we use Theorem \ref{maintheorem} to prove Theorem \ref{thm:gl} from which we deduce all the remaining claims of Section \ref{dwa2}. The Appendix contains certain basic facts concerning Gaussian processes and Gaussian polynomials used throughout the article.

\section{Notation and main results}\label{dwa2}
We write $[n]$ for the set $\{1,\ldots,n\}$. Throughout the article $C$ (resp. $C(\alpha)$) will denote an absolute constant (resp. a constant which may depend on $\alpha$) which may differ at each occurrence.  By $A$ we typically denote
a finite multi-indexed matrix $(a_\id)_{1\leq i_1,\ldots, i_d\leq n}$ of order $d$ with values in a normed space $(F,\lv  \cdot \rv)$. If $\ii = (i_1,\ldots,i_d) \in [n]^{d}$ and $I \subset [d]$, then we define $i_I:=(i_j)_{j \in I}$.

If $U$ is a finite set then $|U|$ stands for its cardinality and by $\pp(U)$ we denote the family of (unordered) partitions of $U$ into nonempty, pairwise disjoint sets. Note that if $U=
\emptyset$ then $\pp(U)$ consists only of the empty partition $\emptyset$.

 With a slight abuse of notation we write $(\pp,\pp ') \in \pp(U)$ if $\pp \cup \pp '\in \pp(U)$ and $\pp \cap \pp ' =\emptyset$.

Let $\pp=\{I_1,\ldots,I_k\},\ \pp '=\{J_1,\ldots,J_m \}$ be such that $(\pp, \pp ') \in \pp([d])$ . Then we define
\begin{align}
&\nor{A}:=\sup \left\{ \E \lv \sum_\id a_\id \prod_{r=1}^k x^r_{i_{I_r}}  \prod_{l =1 }^m g^{l}_{i_{J_l}}  \rv \ \Big{|}\ \forall_{r\leq k} \sum_{i_{I_r}} \left(  x^r_{i_{I_r}} \right)^2 \leq 1 \right\},\label{wspol} \\
&\altnor{A}:=\sup \left\{\E \lv \sum_\id a_\id \prod_{r=1}^k x^r_{i_{I_r}} \prod_{l \in [d]\setminus (\bigcup \pp)} g^l_{i_l} \rv \ \Big{|} \ \forall_{r\leq k} \sum_{i_{I_r}} \left(  x^r_{i_{I_r}} \right)^2 \leq 1\right\}.
\end{align}
We do not exclude the situation that $\pp '$ or $\pp$ is an empty partition. In the first case  $\nor{A}=\altnor{A}$ is defined in non-probabilistic terms. Another case when $\nor{A}=\altnor{A}$ is when $\mathcal{P}'$ consists of singletons only.

In particular for $d=3$ we have (note that to shorten the notation we suppress some brackets and write e.g.
$\robn{A}_{\{2\},\{3\}}$ and $\lv A \rv_{\{1\}  \sep \{2\},\{3\} }$,  instead of $\robn{A}_{\{\{2\},\{3\}\}}$
and $\lv A \rv_{\{\{1\}\}  \sep \{\{2\},\{3\}\} }$)
\[
\lv A \rv_{\emptyset \sep  \{1,2,3\}} =\robn{A}_{\{1,2,3\}}
=\sup_{\sum_{i,j,k} x^2_{ijk} \leq 1} \lv \sum a_{ijk}x_{ijk} \rv,
\]
\[
\lv A \rv_{\emptyset \sep \{1\},\{2,3\}}=\robn{A}_{\{1\},\{2,3\}}
= \sup_{ \sum_i x^2_i\leq 1, \sum_{jk}y_{jk}^2\leq 1 } \lv \sum_{ijk}a_{ijk}x_iy_{jk}\rv.
\]
\[
 \lv A \rv_{\emptyset \sep \{1\},\{2\},\{3\}}=\robn{A}_{\{1\},\{2\},\{3\}}
= \sup_{ \sum_i x^2_i\leq 1, \sum_{j}y_{j}^2\leq 1,\sum_k z_k^2\leq 1 } \lv \sum_{ijk}a_{ijk}x_iy_{j}z_k\rv.
\]
\[
\lv A \rv_{\{1,2\},\{3\} \sep \emptyset }  = \E \lv\sum_{i,j,k} a_{ijk} g_{ij} g'_k \rv,
\]
\[
\lv A \rv_{\{1\}  \sep \{2\},\{3\} }= \robn{A}_{\{2\},\{3\}}
=\sup_{\sum_{j} x^2_j \leq 1, \sum y^2_k \leq 1} \E \lv \sum_{i,j,k} a_{ijk} g_i x_j y_k \rv,
\]
\[
\|A\|_{\{1\},\{2\},\{3\} \sep\emptyset} = \robn{A}_{\emptyset} = \E \lv \sum_{ijk} a_{ijk} g_ig_j'g_k''\rv,
\]
\[
\|A\|_{\{1\},\{3\} \sep\{2\}} = \robn{A}_{\{2\}}= \sup_{\sum_j x^2_j \leq 1} \E \lv \sum_{i,j,k} a_{ijk} g_i x_j g'_k \rv.
\]

The main result is the following moment estimate of the variable $S'$.

\begin{twr}
\label{thm:gl}
Assume that  $A=(a_\id)_\id$ is a finite matrix with values in a normed space $(F,\lv \cdot \rv)$. Then for any $p\geq 1$,
\begin{align}
\label{ine:gl}
\frac{1}{C(d)} \sum_{J\subset [d]} \sum_{\pp \in \pp(J)}p^{{|\pp |}/{2}} \altnor{A}
&\leq \lv \sum_\id a_\id \pg \rv_p \\
\notag
&\phantom{aaaaaaaaaaaa}\leq C(d) \sum_{(\pp,\pp ') \in \pp([d])} p^{|\pp|/2} \nor{A}.
\end{align}
\end{twr}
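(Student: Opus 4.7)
The plan is to derive the upper bound in \eqref{ine:gl} by combining the Borell--Arcones--Gin\'e--Ledoux inequality \eqref{a1} with Theorem~\ref{maintheorem}, the technical heart of Section~\ref{sek3}, and to obtain the lower bound by conditioning on part of the Gaussians, dualizing to scalar chaoses, and invoking the two-sided real-valued estimates of~\cite{Latgaus}.

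For the upper bound, I start from \eqref{a1}, which yields
\[
\lv \sum_\id a_\id \pg \rv_p \le C(d)\sum_{J\subseteq [d]} p^{|J|/2}\, E_J, \qquad E_J := \E\sup \lv \sum_\id a_\id \prod_{j\in J}x^j_{i_j}\prod_{j\notin J}g^j_{i_j}\rv,
\]
where the supremum runs over unit vectors $x^j\in B^n_2$. Theorem~\ref{maintheorem} controls each $p^{|J|/2}E_J$ by $C(d)\sum_{\pp\in\pp(J),\,\pp'\in\pp([d]\setminus J)}p^{|\pp|/2}\nor{A}$. Since every ordered pair $(\pp,\pp')\in\pp([d])$ is uniquely of this form with $J=\bigcup\pp$, summation over $J$ reconstitutes precisely the right-hand side of \eqref{ine:gl}. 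The essential gain -- reducing the $p$-exponent from $|J|/2$ to the possibly much smaller $|\pp|/2$ -- is exactly what the chaining behind Theorem~\ref{maintheorem} supplies.

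For the lower bound, I fix $J\subseteq[d]$ and $\pp=\{I_1,\ldots,I_k\}\in\pp(J)$ and condition on $(g^l)_{l\notin J}$. The conditional variable $S'\mid(g^l)_{l\notin J}$ is an $F$-valued Gaussian chaos of order $|J|$ in $(g^l)_{l\in J}$, with coefficients $c_{i_J}:=\sum_{i_{[d]\setminus J}}a_\id\prod_{l\notin J}g^l_{i_l}$. For any $\phi\in F^*$ with $\lv\phi\rv\le1$, the real chaos $\phi(S'\mid\cdot)=\sum_{i_J}\phi(c_{i_J})\prod_{l\in J}g^l_{i_l}$ is a real tetrahedral chaos of order $|J|$; the scalar lower bound from~\cite{Latgaus} applied with partition $\pp$, followed by supremum over $\phi$ (using $\sup_\phi|\phi(Y)|=\lv Y\rv$), gives pointwise in $(g^l)_{l\notin J}$
\[
\lv S'\mid(g^l)_{l\notin J}\rv_p \gtrsim_d p^{|\pp|/2}\sup_{x^r}\lv\sum_\id a_\id\prod_r x^r_{i_{I_r}}\prod_{l\notin J}g^l_{i_l}\rv.
\]
Taking $L^p$-norms in $(g^l)_{l\notin J}$ (by Fubini one has $\lv S'\rv_p=\lv\,\lv S'\mid(g^l)_{l\notin J}\rv_p\rv_{L^p}$), bounding the outer $L^p$-norm below by the $L^1$-norm, and using $\E\sup_{x^r}\lv\cdots\rv\ge\sup_{x^r}\E\lv\cdots\rv=\altnor{A}$, we conclude $\lv S'\rv_p\gtrsim_d p^{|\pp|/2}\altnor{A}$. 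Summing over the finite collection of pairs $(J,\pp)$ yields the lower estimate in \eqref{ine:gl}.

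The main obstacle is Theorem~\ref{maintheorem} itself: controlling the expected supremum of an $F$-valued Gaussian process indexed by a product of Euclidean unit balls by the hybrid norms $\nor{A}$. Its proof, deferred to Section~\ref{sek4}, rests on carefully tailored entropy estimates developed in Section~\ref{sek3} together with an inductive Talagrand-type chaining argument over the block structure of the partitions~$\pp$ and~$\pp'$.
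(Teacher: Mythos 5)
Your lower bound argument is essentially the paper's: condition on part of the Gaussian vectors, dualize with $\varphi\in F^*$, apply the real-valued two-sided estimate of \cite{Latgaus} (Theorem \ref{momrealgaus}) to the conditional scalar chaos, recover the norm from the supremum over $\varphi$, and pass the remaining expectation through with $(\E X^p)^{1/p}\ge \E X$ and $\E\sup\ge\sup\E$. That part is correct.

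The upper bound, however, has a genuine gap. You invoke \eqref{a1} and then claim that Theorem \ref{maintheorem} controls $p^{|J|/2}E_J$ for \emph{every} $J\subseteq[d]$. But Theorem \ref{maintheorem} (equivalently Theorem \ref{thm:proc}) bounds the expected supremum only in the case where exactly \emph{one} Gaussian vector sits outside the supremum, i.e.\ $|[d]\setminus J|=1$. For $|[d]\setminus J|\ge 2$ the quantity $E_J=\E\sup_x\lv\sum_\ii a_\ii\prod_{j\in J}x^j_{i_j}\prod_{j\notin J}g^j_{i_j}\rv$ involves several independent Gaussian vectors outside the supremum, and since $\sup_x\E\lv\cdot\rv\le\E\sup_x\lv\cdot\rv$, it \emph{dominates} the corresponding norms $\nor{A}$; bounding it from above by such norms is a strictly stronger statement than Theorem \ref{maintheorem} and is established nowhere in the paper. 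One cannot simply absorb the extra Gaussian coordinates into an auxiliary norm on $F^{n^{|J^c|-1}}$, because the expectations in $E_J$ are outside the supremum while the expectation defining such a norm would sit inside it. The paper circumvents exactly this difficulty by arguing inductively on $d$: it applies the induction hypothesis conditionally on $g^d$, producing random norms $\lv\sum_{i_d}a_\ii g_{i_d}\rv_{\pp'\sep\pp}$ in which all but one expectation already sits inside the supremum; it then splits their $L_p$-norms via Gaussian concentration (Lemma \ref{kon}) into a mean plus $\sqrt{p}$ times a deterministic norm, and finally applies Theorem \ref{thm:proc} to the mean, with the ambient Banach space taken to be $F^{n^{|J|}}$ equipped with the expectation-norm $\robn{(a_{i_J})_{i_J}}=\E\lv\sum_{i_J}a_{i_J}\prod_l g^l_{i_{J_l}}\rv$. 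Your one-line reduction of $E_J$ to Theorem \ref{maintheorem} hides precisely this inductive machinery, so as written the upper bound is not proved.
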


The lower bound in \eqref{ine:gl} motivates the following conjecture (we leave it to the reader to verify that in general Banach spaces it is impossible to reverse the upper bound even for $d=2$).

\begin{con}
Under the assumption of Theorem \ref{thm:gl} we have
\begin{equation}
\lv \sum_\id a_\id \pg \rv_p \leq C(d)\sum_{J\subset [d]} \sum_{\pp \in \pp(J)}p^{{|\pp |}/{2}} \altnor{A}. \label{wz}
\end{equation}

\end{con}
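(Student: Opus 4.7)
I aim to establish the pointwise estimate $p^{|\pp|/2} \altnor{A} \leq C(d) \lv S' \rv_p$ for each $J \subseteq [d]$ and $\pp = \{I_1,\ldots,I_m\} \in \pp(J)$; summing over the finitely many pairs $(J,\pp)$ (a count depending only on $d$) then produces the LHS of \eqref{ine:gl}. The plan is to condition on the outer Gaussians $(g^l_{i_l})_{l \in [d]\setminus J}$ and write $S' = \sum_{i_J} Y_{i_J} \prod_{l \in J} g^l_{i_l}$, where $Y_{i_J} := \sum_{i_{[d]\setminus J}} a_\id \prod_{l \in [d]\setminus J} g^l_{i_l}$ is the $F$-valued random tensor in the outer Gaussians. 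For each $\varphi \in F^*$ with $\lv \varphi \rv \leq 1$, I apply Lata\l{}a's moment comparison for scalar Gaussian chaoses \cite{Latgaus} to the conditional scalar chaos $\varphi(S') = \sum_{i_J}\varphi(Y_{i_J}) \prod_{l \in J} g^l_{i_l}$; this lower-bounds its conditional $L_p$-norm by $c(d) p^{|\pp|/2}$ times the deterministic partition-$\pp$ norm of $(\varphi(Y_{i_J}))_{i_J}$. Taking $\sup_\varphi$ and invoking Hahn--Banach duality turns the scalar partition norm into $\sup_{(x^r)} \lv \sum_{i_J} Y_{i_J} \prod_r x^r_{i_{I_r}} \rv$. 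Finally, Fubini together with Jensen's inequality (at each fixed choice of $(x^r)$) replaces the remaining $L_p$-norm by an $L_1$-norm and produces precisely $\altnor{A}$.

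\textbf{Upper bound.} The starting point is the classical Borell--Arcones--Gin\'e--Ledoux estimate \eqref{a1}, which expresses $\lv S' \rv_p$ as a sum over $J \subseteq [d]$ of terms of the form $p^{|J|/2} \E \sup_{x^j \in B^n_2,\, j \in J} \lv \sum_\id a_\id \prod_{j \in J} x^j_{i_j} \prod_{j \in [d]\setminus J} g^j_{i_j} \rv$. For each such term one must bound the expected supremum by a linear combination of the \emph{collapsed} quantities $\nor{A}$, in which several individual coordinates are merged into single tensor indices. This is precisely the content of the key Theorem \ref{maintheorem}, formulated in Section \ref{sek3} and proved in Section \ref{sek4}. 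Once it is in hand, substitution into \eqref{a1} together with a combinatorial accounting of the $p$-exponents (the factor $p^{|J|/2}$ from Borell combines with the $\pp$-dependent factors from Theorem \ref{maintheorem} to give $p^{|\pp|/2}$) yields the upper bound in \eqref{ine:gl}.

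\textbf{Main obstacle.} Everything of substance is concentrated in Theorem \ref{maintheorem}, the higher-order analogue of the central estimate of \cite{wek}. One has to control the supremum of a Gaussian process on the product of Euclidean balls $\prod_{j \in J} B^n_2$ by a combination of the collapsed norms $\nor{A}$ in which blocks of $\pp$ are treated as single tensor coordinates. I expect the proof to proceed by generic chaining, using admissible sequences on $\prod_{j \in J} B^n_2$ adapted to the family of partition-indexed metrics read off from $A$; the entropy estimates for these metrics have to be balanced so that each scale contributes a term of the form $p^{|\pp|/2}\nor{A}$. The delicate matching of partition cardinalities to powers of $p$, which in \cite{wek} exploited the comparatively simple partition lattice of $[2]$, introduces genuinely new combinatorial difficulties for arbitrary $d$ and is where I expect the bulk of the technical effort to lie.
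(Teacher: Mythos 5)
There is a fundamental gap: the statement you are asked to prove is a \emph{conjecture} which the paper does not prove, and your argument in fact establishes a different (weaker) inequality, namely the upper bound of Theorem \ref{thm:gl}. The right-hand side of \eqref{wz} involves only the norms $\altnor{A}$ with $\pp\in\pp(J)$, in which every coordinate outside $J$ carries its own independent single-index Gaussian $g^l_{i_l}$. Your upper-bound step feeds the Borell--Arcones--Gin\'e--Ledoux estimate \eqref{a1} into Theorem \ref{maintheorem}, but the output of Theorem \ref{maintheorem} is a combination of the norms $\nor{A}$ indexed by \emph{pairs} of partitions $(\pp,\pp')\in\pp([d])$, where the blocks $J_l$ of $\pp'$ may have size larger than one and contribute Gaussians $g^l_{i_{J_l}}$ indexed by whole tuples. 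Terms such as $\lv A\rv_{\{1,2\}\sep\emptyset}=\E\lv\sum_{i,j}a_{ij}g_{ij}\rv$ therefore genuinely appear in the bound you obtain and do not occur on the right-hand side of \eqref{wz}. Passing from $\nor{A}$ to $\altnor{A}$ requires an inequality of the form $\E\lv\sum_{i,j}b_{ij}g_{ij}\rv\le K\,\E\lv\sum_{i,j}b_{ij}g_ig'_j\rv$, which is exactly condition \eqref{warunek1} (Gaussian property $(\alpha+)$) and \emph{fails} in general Banach spaces; this is the content of Proposition \ref{takie} and Corollary \ref{cor:special-spaces}, which prove \eqref{wz} only under that extra hypothesis. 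Remark \ref{rem11} states explicitly that in full generality \eqref{wz} is known only for $d=2$ and only with an additional $\ln p$ factor.

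Your lower-bound paragraph is not relevant to the statement (the conjecture is a one-sided upper bound; the lower bound is part of Theorem \ref{thm:gl} and is already proved in the paper). In short, nothing in your proposal addresses the actual open difficulty --- eliminating the ``coupled'' Gaussian terms $g_{i_{J_l}}$ with $|J_l|\ge 2$ from the upper bound without assuming \eqref{warunek1} --- and no proof of \eqref{wz} for general $F$ and general $d$ is currently known.
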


\begin{ex}
In particular for $d=3$,  Theorem \ref{thm:gl} yields for symmetric matrices
\begin{align*}
\frac{1}{C}S_1\leq\lv \sum_{ijk} a_{ijk} g^1_i g^2_j g^3_k \rv_p \leq C(S_1+S_2),
\end{align*}
where (we recall) C is a numerical constant and
\begin{align*}
S_1&:=\E \lv \sum_{ijk} a_{ijk} g^1_i g^2_j g^3_k   \rv
\\
&+p^{1/2}\left(\sup_{\|x\|_2\leq 1} \E \lv \sum_{ijk} a_{ijk} g^1_{i}g^2_j x_k   \rv+\sup_{\|x\|_2\leq 1} \E \lv \sum_{ijk} a_{ijk} g_i x_{jk}  \rv
+\sup_{\|x\|_2\leq 1}  \lv \sum_{ijk} a_{ijk}  x_{ijk} \rv\right)
\\
&+p\left(\sup_{\|x\|_2,\|y\|_2\leq 1} \E \lv \sum_{ijk} a_{ijk} g_i x_j y_k  \rv+\sup_{\|x\|_2,\|y\|_2\leq 1}  \lv \sum_{ijk} a_{ijk}  x_{ij} y_k \rv \right)\\
&+p^{3/2} \sup_{\|x\|_2,\|y\|_2,\|z\|_2\leq 1}  \lv \sum_{ijk} a_{ijk} x_i y_j z_k  \rv
\\
S_2&:=\E \lv \sum_{ijk} a_{ijk} g_{ijk}  \rv+\E \lv \sum_{ijk} a_{ijk}  g^1_{ij} g^2_k \rv+p^{1/2} \sup_{\|x\|_2\leq 1} \E \lv \sum_{ijk} a_{ijk} g^1_{ij} x_k   \rv.
\end{align*}
\end{ex}

\begin{rem}\label{rem11}
Unfortunately we are able to show \eqref{wz} only for $d=2$ and with an additional factor $\ln p$ (cf. \cite{wek}). It is likely that by a  modification
 of our proof one can show \eqref{wz} for arbitrary $d$ with an additional factor $(\ln  p)^{C(d)}$.
\end{rem}

By a standard application of Chebyshev and Paley-Zygmund inequalities, Theorem \ref{thm:gl} can be expressed in terms of tails.

\begin{twr}\label{thm:og}
Under the assumptions of Theorem \ref{thm:gl} the following two inequalities hold.
 For any $t>C(d) \sum_{\pp ' \in \pp([d])} \lv A \rv_{\pp '|\emptyset}$,
\begin{align*}
\Pro \left(\lv \sum_\id a_\id \pg \rv \geq t  \right)
&\leq  2\exp \left(-\frac{1}{C(d)} \min_{\substack{(\pp,\pp ') \in \pp([d])\\ |\pp|>0}} \left(\frac{t}{\nor{A}} \right)^{2/|\pp|}  \right),
\end{align*}
and for any $t\geq 0$,
\begin{align*}
\Pro \left(\lv \sum_\id a_\id \pg \rv \geq \frac{1}{C(d)} \E \left\| \sum_{i_1,\ldots,i_d} a_{\id} \pg \right\| + t  \right)
\\
\geq \frac{1}{C(d)}\exp\left(-C(d) \min_{\emptyset\neq J\subset [d]} \min_{\pp \in \pp(J)} \left(\frac{t}{\altnor{A}} \right)^{2/|\pp|} \right) .
\end{align*}
\end{twr}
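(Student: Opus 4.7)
The plan is the standard conversion between moments and tails: Chebyshev for the upper bound, Paley--Zygmund for the lower bound, with the moment exponent $p$ optimized so that the dominant term in Theorem \ref{thm:gl} matches $t$. Write $S := \sum_\id a_\id \pg$ throughout.

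For the upper tail, I apply Chebyshev in the form $\Pro(\lv S\rv \geq e\lv S\rv_p) \leq e^{-p}$ and substitute the right-hand side of \eqref{ine:gl}. The sum splits into the $|\pp|=0$ deterministic contribution (controlled by the hypothesis $t > C(d)\sum_{\pp'}\lv A\rv_{\pp' | \emptyset}$) and the rest. For the latter I choose
$$p := \frac{1}{C_1(d)} \min_{(\pp,\pp')\in\pp([d]),\, |\pp|>0} \left(\frac{t}{\nor{A}}\right)^{2/|\pp|}$$
with $C_1(d)$ large enough that each surviving summand $eC(d)\, p^{|\pp|/2}\nor{A}$ contributes at most $t$ divided by the number of summands; the hypothesis on $t$ also ensures $p\geq 1$.

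For the lower tail, I apply Paley--Zygmund to $Y := \lv S\rv^p$: $\Pro(Y \geq \tfrac12 \E Y) \geq \tfrac{1}{4}\lv S\rv_p^{2p}/\lv S\rv_{2p}^{2p}$. The auxiliary fact needed is the Banach-valued Kahane--Khintchine comparison $\lv S\rv_{2p} \leq C(d)\lv S\rv_p$ for Gaussian chaos of order $d$, a standard consequence of hypercontractivity applied coordinate by coordinate (already recorded in the Appendix). This yields $\Pro(\lv S\rv \geq \tfrac12 \lv S\rv_p) \geq c \exp(-C_2(d)p)$. I then invoke the lower bound in \eqref{ine:gl}; the $J=\emptyset$ summand equals $\E\lv S\rv$, since for $\pp=\emptyset$ the quantity $\altnor{A}$ is precisely $\E\lv S\rv$. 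Choosing
$$p := \max\!\left(1,\ \min_{\emptyset\neq J \subset[d],\ \pp\in\pp(J)} \left(\frac{C_3(d)\, t}{\altnor{A}}\right)^{2/|\pp|}\right)$$
forces $\tfrac12 \lv S\rv_p \geq \tfrac{1}{C(d)}\E\lv S\rv + t$; substituting into Paley--Zygmund finishes the proof, with the case when the inner minimum is below $1$ absorbed into the constant.

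The computations are routine; the only real obstacle is the bookkeeping of the various $C(d)$-dependent constants, which must chain cleanly through the hypothesis on $t$, the split of the sum, the hypercontractive ratio, and the choice of $p$. Degenerate vanishing norms are handled by the convention that $(t/\altnor{A})^{2/|\pp|}$ equals $+\infty$ when $\altnor{A}=0$, so such terms simply drop out of the minima.
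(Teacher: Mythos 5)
Your proposal is correct and follows essentially the same route as the paper: Chebyshev applied to the upper moment bound of Theorem \ref{thm:gl} with $p$ chosen as the stated minimum for the upper tail, and Paley--Zygmund combined with the hypercontractive comparison of $\lv S\rv_{2p}$ and $\lv S\rv_p$ (Theorem \ref{gaushiper}) together with the lower moment bound for the lower tail. The only differences are cosmetic (the Paley--Zygmund threshold $\tfrac12\E\lv S\rv^p$ versus the paper's $2^{-p}\E\lv S\rv^p$, and the explicit handling of small $p$ by absorbing it into constants), so nothing of substance is missing.
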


In  view of \eqref{a1} and \cite{Latgaus} it is clear that to prove
Theorem \ref{thm:gl} one needs to estimate suprema of some Gaussian processes.
The next statement is the key element of the proof of the upper bound in
\eqref{ine:gl}.
\begin{twr}\label{thm:proc}
Under the assumptions of Theorem \ref{thm:gl} we have for any $p\geq 1$
\begin{align}
\E \sup_{(x^2,\ldots,x^d) \in (B^n_2)^{d-1}} \lv \sum_{i_1,\ldots,i_d} a_{i_1,\ldots,i_d} g_{i_1} \prod_{k=2}^{d} x^k_{i_k} \rv \leq C(d) \sum_{(\pp,\pp ')\in \pp([d])} p^{\frac{|\pp|+1-d}{2}}\nor{A}. \label{nieprzet}
\end{align}
\end{twr}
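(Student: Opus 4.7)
My plan is to attack \eqref{nieprzet} by combining Talagrand's generic chaining for Banach-valued Gaussian processes with induction on $d$. Write
\[
Z_x := \sum_\id a_\id g_{i_1} \prod_{k=2}^d x^k_{i_k}, \qquad x = (x^2,\ldots,x^d) \in T:=(B^n_2)^{d-1}.
\]
The case $d=1$ is vacuous: the supremum disappears and the right-hand side reduces to the single term $p^{0}\nor{A}=\E\lv \sum_{i_1} a_{i_1} g_{i_1} \rv$ for $(\pp,\pp')=(\emptyset,\{\{1\}\})$, which matches the left-hand side.

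For $d \geq 2$, for each fixed $x \in T$ the Banach-valued Gaussian variable $Z_x$ satisfies, by Borell--TIS, a subgaussian concentration inequality around $\E\lv Z_x \rv$ with weak variance
\[
\sigma(x)^2 = \sup_{\varphi \in B_{F^*}} \sum_{i_1} \varphi\!\left(\sum_{i_2,\ldots,i_d} a_\id \prod_{k=2}^d x^k_{i_k}\right)^{\!2}.
\]
For $x,y \in T$ I would decompose the increment telescopically,
\[
Z_x - Z_y = \sum_{k=2}^d \Delta_k(x,y),
\]
where $\Delta_k$ interpolates between $x^j$ and $y^j$ only at position $k$. Each $\Delta_k$ is again a Gaussian vector in $F$ whose expected norm and weak variance are controlled by norms $\nor{A'}$ of a chaos of order $d-1$ obtained by freezing the remaining coordinates, so by the inductive hypothesis they themselves decompose into sums of appropriate $\nor{A}$. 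Equipping $T$ with the family of pseudo-distances induced by these $\Delta_k$'s and applying generic chaining in each direction then yields a $\gamma_2$-type bound whose contribution along direction $k$ may be reduced, via dual Sudakov together with standard entropy estimates for products of Euclidean balls, to a Gaussian chaos of one lower order.

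Collecting all contributions, the chaining sum should collapse into
\[
C(d) \sum_{(\pp,\pp') \in \pp([d])} p^{(|\pp|+1-d)/2} \nor{A},
\]
the factor $p^{(|\pp|+1-d)/2}$ arising as the product of $p^{|\pp|/2}$ coming from classical generic chaining and $p^{(1-d)/2}$ from the $(d-1)$-fold sup over unit balls already present on the left-hand side. The main obstacle will be the combinatorial bookkeeping: one has to match every pair $(\pp,\pp') \in \pp([d])$ to exactly one term produced by the induction (which delivers sums over partitions of $[d]\setminus\{k\}$ for each interpolating direction $k$), accounting for the fact that the index $1$ -- though always ``Gaussian'' on the left-hand side -- may end up either in $\pp$ or in $\pp'$ on the right-hand side. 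A further delicate point, foreshadowed by Remark \ref{rem11}, is to set the chaining scales sharply enough to avoid a logarithmic loss; this will require the tightly calibrated chaining technology from \cite{Latgaus} rather than a naive chaining estimate.
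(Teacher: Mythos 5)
Your plan correctly identifies the general territory (a Lat\-a{\l}a-style chaining argument combined with induction on $d$), but as written it has a gap at precisely the point where the real work happens, and the mechanism you propose for closing it would fail. The central obstacle is that the entropy numbers of $(B^n_2)^{d-1}\times B_{F^*}$ in the natural $L_2$-metric of the process admit only a bound of the form $\sqrt{\log N(U,\rho_A,\delta)}\le C(d)\sum_{k=1}^{d-1} a_k\,\delta^{-1/k}+\dots$, whose $k=1$ term $\delta^{-1}$ is \emph{not integrable} near $0$; consequently neither Dudley's bound nor a ``$\gamma_2$-type'' generic chaining estimate closes, no matter how the scales are calibrated. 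The paper's resolution is not a sharper chaining sum but a structurally different scheme: (a) an \emph{amplified} induction hypothesis, valid for arbitrary subsets $U$ of the index set and containing an extra diameter term $\sqrt{p}\,\Delta_A(U)$; (b) a recursive partition-and-translate step in which $U$ is split into $\exp(C(d)2^{2l}p)$ pieces using simultaneously the entropy of $\rho_A$ and the entropy of an auxiliary norm $\hat{\alpha}_A$ built from the norms $\nor{\sum_{i_j}a_{\ii}x_{i_j}^j}$ of lower-order matrices; the pieces are then recentered at points of $U$, and the translation error is controlled by applying the induction hypothesis (in $d$) to the frozen matrices $A(\mathbf{y},I)$; (c) a second induction on the cardinality of $U$ (the quantities $c_U(r,l)$), which is what ultimately sums the geometric series of scales. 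None of these three ingredients appears in your proposal, and the telescopic decomposition $Z_x-Z_y=\sum_k\Delta_k(x,y)$ with ``chaining in each direction'' is not an adequate substitute: it produces a family of metrics but no way to assemble them without reintroducing the divergent $\delta^{-1}$ term. Your heuristic that $p^{(1-d)/2}$ comes ``from the $(d-1)$-fold sup over unit balls'' has no mechanism behind it; in the actual proof the negative powers of $p$ arise because the partition scale is taken to be $\eps\sim 2^{-l}p^{-1/2}$ inside a functional that is a polynomial of degree $d-1$ in $\eps$.

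A second, smaller but genuine issue is the base of the induction. The key entropy estimate that makes the inductive step work (the analogue of Proposition \ref{prop}) is simply false for $d=2$, because the relevant partitions $\pp\in\pp([d]\setminus\{l\})$ with $|\pp|=d-2$ do not exist (Remark \ref{niezachodzi}); this is why the paper imports the case $d=2$ wholesale from \cite{wek} and starts the induction at $d=3$. Your plan treats $d=1$ as the (vacuous) base case and proceeds uniformly, which cannot work: the step from $d=1$ to $d=2$ requires a different argument. Finally, note that the reduction to a real-valued process (replacing $\lv\cdot\rv$ by a supremum over $t\in B_{F^*}$ and treating $A$ as a $(d+1)$-indexed real matrix) is what allows the Gaussian-process machinery to be applied at all; your sketch keeps the Banach norm throughout and invokes Borell--TIS per point, which gives concentration for each fixed $x$ but contributes nothing toward controlling the supremum over $x$.
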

 We postpone proofs of the above results till Section \ref{sec:proofs} and discuss now
some of their consequences.

\subsection{Two-sided estimates in special classes of Banach spaces}\label{sec:two-sided}
\paragraph{}

We start by introducing a class of normed spaces for which the estimate \eqref{ine:gl} is two-sided. To this end we restrict our attention to normed spaces $(F,\lv  \cdot \rv)$ which satisfy the following condition:
 there exists a constant $K=K(F)$ such that for any $n\in \N$ and any matrix $(b_{i,j})_{i,j\leq n}$ with values in $F$,
\begin{align}\label{warunek1}
\E \lv \sum_{i,j} b_{i,j} g_{i,j} \rv \leq K \E \lv \sum_{i,j} b_{i,j} g_i g'_j \rv.
\end{align}

This property appears in the literature under the name \emph{Gaussian property ($\alpha+$)} (see \cite{vNW}) and is closely related to Pisier's contraction property \cite{Pisier}. It has found applications, e.g., in the theory of stochastic integration in Banach spaces. We refer to \cite[Chapter 7]{Hytonen} for a thorough discussion and examples, mentioning only that \eqref{warunek1} holds for Banach spaces of type 2, and for Banach lattices \eqref{warunek1} is equivalent to finite cotype.

\begin{rem}
By considering $n=1$ it is easy to see that $K \ge \sqrt{\pi/2} > 1$.
\end{rem}

A simple inductive argument and \eqref{warunek1} yield that for any $d, n\in \N$ and any $F$-valued matrix $(b_{i_1,\ldots,i_d})_{i_1,\ldots,i_d\leq n}$,
\begin{align}\label{warunek2}
\E \lv \sum_\ii b_\ii g_\ii \rv \leq K^{d-1} \E \lv \sum_\ii b_\ii \pg \rv,
\end{align}
where we recall that  $\ii=(i_1,\ldots,i_d) \in [n]^d$.
It turns out that under the condition \eqref{warunek1} our bound   \eqref{ine:gl} is actually two-sided.

\begin{prep}\label{takie}
Assume that $(F,\lv  \cdot \rv)$ satisfies \eqref{warunek1} and $(\pp,\pp ') \in \pp([d])$. Then
$$
\nor{A}\leq K^{|\bigcup \pp '|-|\pp '|} \altnor{A}.
$$
\end{prep}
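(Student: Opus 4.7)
\emph{Proof strategy.} The plan is to apply the iterated decoupling inequality \eqref{warunek2} once for each block $J_l$ of the partition $\pp'$, replacing each multi-indexed Gaussian $g^l_{i_{J_l}}$ by a fully decoupled product $\prod_{j\in J_l}\tilde g^{l,j}_{i_j}$. The $l$-th step will cost a factor $K^{|J_l|-1}$, so the total cost accumulates to
\[
\prod_{l=1}^m K^{|J_l|-1}=K^{\sum_{l=1}^m(|J_l|-1)}=K^{|\bigcup\pp'|-|\pp'|},
\]
which is exactly the exponent appearing in the claim.

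To execute this, write $\pp=\{I_1,\ldots,I_k\}$, $\pp'=\{J_1,\ldots,J_m\}$ and fix vectors $(x^r)_{r=1}^k$ admissible in the supremum defining $\nor{A}$. First I would condition on $g^2,\ldots,g^m$. Grouping the sum according to $i_{J_1}$, the expression inside the norm becomes $\sum_{i_{J_1}}b_{i_{J_1}}g^1_{i_{J_1}}$, where the $F$-valued coefficients $b_{i_{J_1}}$ depend on the conditioned variables but not on $g^1$. Since $(g^1_{i_{J_1}})_{i_{J_1}\in[n]^{|J_1|}}$ is an i.i.d.\ $\mathcal{N}(0,1)$ family indexed by a product set, \eqref{warunek2} with $d$ replaced by $|J_1|$ applies conditionally, and taking the outer expectation yields
\[
\E \lv \sum_\id a_\id \prod_{r=1}^k x^r_{i_{I_r}} \prod_{l=1}^m g^l_{i_{J_l}} \rv \leq K^{|J_1|-1}\, \E \lv \sum_\id a_\id \prod_{r=1}^k x^r_{i_{I_r}} \prod_{j\in J_1}\tilde g^{1,j}_{i_j}\prod_{l=2}^m g^l_{i_{J_l}} \rv,
\]
where the $(\tilde g^{1,j})_j$ are fresh independent standard Gaussian families. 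Iterating the same argument for $l=2,\ldots,m$ would give
\[
\E \lv \sum_\id a_\id \prod_{r=1}^k x^r_{i_{I_r}} \prod_{l=1}^m g^l_{i_{J_l}} \rv \leq K^{|\bigcup\pp'|-|\pp'|}\,\E \lv \sum_\id a_\id \prod_{r=1}^k x^r_{i_{I_r}} \prod_{l=1}^m\prod_{j\in J_l}\tilde g^{l,j}_{i_j} \rv.
\]

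To conclude, since $\bigcup\pp'=[d]\setminus\bigcup\pp$, relabeling the independent Gaussians $\{\tilde g^{l,j}:l\in[m],\,j\in J_l\}$ as $(g^{l'}_{i_{l'}})_{l'\in[d]\setminus\bigcup\pp}$ turns the right-hand side into exactly the expression defining $\altnor{A}$ at the same $(x^r)$. Taking the supremum over admissible $(x^r)$ completes the argument. I do not anticipate a real obstacle beyond the bookkeeping of conditional applications of \eqref{warunek2}; since that inequality is stated for arbitrary deterministic $F$-valued coefficient arrays, its conditional use is justified by applying it pointwise in the realization of the other Gaussians and invoking Fubini.
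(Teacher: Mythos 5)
Your proposal is correct and follows essentially the same route as the paper: condition on the Gaussians attached to the other blocks, apply the iterated inequality \eqref{warunek2} to the block being decoupled via Fubini (incurring a factor $K^{|J_l|-1}$), and iterate over the blocks of $\pp'$ — the paper merely packages this iteration as an induction on the number of blocks of size at least $2$. The bookkeeping and the final identification with $\altnor{A}$ are exactly as in the paper's argument.
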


\begin{proof}
Let $\pp '=(J_1,\ldots,J_k),\ \pp=(I_1,\ldots,I_m)$. Then $|\bigcup \pp '|-|\pp '|=\sum_{l=1}^k(|J_l|-1)$.
The proof is by induction on $s:=| \{l \ : \ |J_l| \geq 2 \} |$. If $s=0$ the assertion follows by the definition
of $\altnor{A}$.
Assume that the statement holds for $s$ and $| \{l \ : \ |J_l| \geq 2 \} |=s+1$. Without loss of generality $|J_1|\geq 2$. Combining Fubini's Theorem with \eqref{warunek2} we obtain
\begin{align*}
&\nor{A}=\sup \left\{\E^{(G^2,\ldots,G^m)} \E^{G^1}\lv\sum_\ii a_\ii \prod_{r=1}^m x^r_{i_{I_r}} g^1_{i_{J_1}}\prod_{r=2}^k g^r_{i_{J_r}}\rv \ \Big{|} \ \forall_{r\leq m} \sum_{i_{ I_r}}  \left(x^r_{i_{I_r}}\right)^2 \leq 1 \right\} \\
&\leq K^{|J_1|-1} \sup \left\{\E^{(G^2,\ldots,G^m)} \E^{G'}\lv\sum_\ii a_\ii \prod_{r=1}^m x^r_{i_{I_r}} \prod_{j \in J_1} (g')^j_{i_j}\prod_{r=2}^k g^r_{i_{J_r}}\rv \ \Big{|} \ \forall_{r\leq m} \sum_{i_{ I_r}}  \left(x^r_{i_{I_r}}\right)^2 \leq 1 \right\} \\
&\leq K^{|\bigcup \pp '|-|\pp '|} \altnor{A},
\end{align*}
where $G^l=(g^l_{i_{I_l}})_{i_{I_l}}$, $G'=((g')^j_{i_j})_{j \in J_1,i_{J_1}}$ and in the last inequality we used the induction assumption.
\end{proof}
The following corollary is an obvious consequence of Proposition \ref{takie} and Theorems \ref{thm:gl}, \ref{thm:og}.

\begin{cor}\label{cor:special-spaces}
For any normed space $(F,\lv \cdot \rv)$ satisfying \eqref{warunek1} we have for $p\geq 1$,
\begin{equation}
\lv \sum_\id a_\id \pg \rv_p\leq C(d) K^{d-1} \sum_{T\subset [d]}\sum_{\pp \in \pp(T)} p^{|\pp|/2} \altnor{A},\nonumber
\end{equation}
and for   $t>C(d) K^{d-1}\E \lv \sum_\ii a_\ii \pg \rv$,
\begin{equation}
\Pro \left(\lv \sum_\id a_\id \pg \rv \geq t  \right)
\leq 2\exp\left(-\frac{1}{C(d)}K^{2-2d} \min_{\emptyset\neq J\subset [d]} \min_{\pp \in \pp(J)} \left(\frac{t}{\altnor{A}} \right)^{2/|\pp|} \right). \nonumber
\end{equation}
\end{cor}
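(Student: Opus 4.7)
The proof will be a direct combination of Proposition \ref{takie} (together with its iterated form \eqref{warunek2}) applied to the upper bounds of Theorems \ref{thm:gl} and \ref{thm:og}. The key bookkeeping observation is that the index set $\pp([d])$ decomposes according to $J:=\bigcup \pp\subseteq [d]$: every pair $(\pp,\pp')\in\pp([d])$ is uniquely determined by $J$, a partition $\pp\in\pp(J)$, and $\pp'\in\pp([d]\setminus J)$. Moreover, either $\pp'=\emptyset$ (so that the exponent in Proposition \ref{takie} is $0$) or $|\pp'|\ge 1$ and $|\bigcup\pp'|\le d$; in both cases $|\bigcup\pp'|-|\pp'|\le d-1$, which gives the uniform inequality $\nor{A}\le K^{d-1}\altnor{A}$.

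For the moment estimate, I will start from the upper bound in \eqref{ine:gl}, substitute $\nor{A}\le K^{d-1}\altnor{A}$, and then reorganize the sum over $(\pp,\pp')$ by first fixing $J\subseteq [d]$ and $\pp\in\pp(J)$ and then summing over $\pp'\in\pp([d]\setminus J)$. Since $\altnor{A}$ depends only on $\pp$, the inner sum equals the Bell number $B_{d-|J|}\le B_d$, which is absorbed into $C(d)$.

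For the tail estimate, I will invoke the first inequality of Theorem \ref{thm:og}. Using the bound on $\nor{A}$ above together with $|\pp|\ge 1$ and $K\ge 1$, one has
\[
\left(\frac{t}{\nor{A}}\right)^{2/|\pp|}\ge K^{-2(d-1)/|\pp|}\left(\frac{t}{\altnor{A}}\right)^{2/|\pp|}\ge K^{2-2d}\left(\frac{t}{\altnor{A}}\right)^{2/|\pp|}.
\]
Because $\altnor{A}$ does not involve $\pp'$, and because every non-empty $\pp\in\pp(J)$ with $J\subseteq[d]$ non-empty extends to some $(\pp,\pp')\in\pp([d])$ (take $\pp'$ to be any partition of $[d]\setminus J$, including the empty one), the minimum over pairs $(\pp,\pp')$ with $|\pp|>0$ collapses to the minimum over non-empty $J\subseteq[d]$ and $\pp\in\pp(J)$ claimed in the corollary. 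Finally, I will re-express the admissibility threshold on $t$ by applying Proposition \ref{takie} with $\pp=\emptyset$: this yields $\lv A\rv_{\pp'\sep\emptyset}\le K^{d-|\pp'|}\E\lv\sum_\id a_\id \pg\rv\le K^{d-1}\E\lv\sum_\id a_\id \pg\rv$, and summing over the finitely many $\pp'\in\pp([d])$ produces the required bound $t>C(d)K^{d-1}\E\lv\sum_\ii a_\ii \pg\rv$.

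I do not anticipate any genuine obstacle; once Proposition \ref{takie} is available, the corollary reduces to careful bookkeeping. The one item requiring attention is the uniform estimate $|\bigcup\pp'|-|\pp'|\le d-1$, which must be checked by separating the case $\pp'=\emptyset$ from $\pp'\neq\emptyset$.
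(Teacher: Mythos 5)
Your proposal is correct and follows exactly the route the paper takes: the authors state the corollary as an immediate consequence of Proposition \ref{takie} combined with Theorems \ref{thm:gl} and \ref{thm:og}, and your write-up simply makes explicit the bookkeeping (the uniform bound $|\bigcup\pp'|-|\pp'|\le d-1$, the reindexing of $\pp([d])$ by $J=\bigcup\pp$, and the conversion of the admissibility threshold) that the paper leaves to the reader. No gaps.
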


Thanks to infinite divisibility of Gaussian variables, the above corollary can be in fact generalized to arbitrary polynomials in Gaussian variables, as stated in the following theorem.

\begin{twr}\label{thm:general-polynomials}Let $F$ be a Banach space. If $G$ is a standard Gaussian vector in $\R^n$ and $f\colon \R^n \to F$ is a polynomial of degree $D$, then for all $p \ge 2$,
\begin{align}\label{eq:general-poly-1}
&\|f(G) - \E f(G)\|_p\\
&\ge \frac{1}{C(D)} \Big(\E \|f(G) - \E f(G)\|+ \sum_{1\le d\le D}\sum_{\emptyset \neq T\subset [d]} \sum_{\pp \in \pp(T)} p^{\frac{|\pp|}{2}} \altnor{\E \D^df(G)}\Big)\nonumber
\end{align}
and for all $t > 0$,
\begin{align}\label{eq:general-poly-2}
\Pro\Big(\|f(G) - \E f(G)\| \ge \frac{1}{C(D)} (\E \|f(G) - \E f(G)\| + t)\Big) \ge \frac{1}{C(D)}\exp\Big(-C(D) \eta_f(t)\Big),
\end{align}
where
\begin{align*}
\eta_f(t) = \min_{1\le d\le D}\min_{\emptyset \neq T\subset [d]} \min_{\pp \in \pp(T)} \Big(\frac{t}{\altnor{\E \D^d f(G)}}\Big)^{2/|\pp|}.
\end{align*}
Moreover, if $F$ satisfies \eqref{warunek1}, then for all $p \ge 1$,
\begin{align}\label{eq:general-poly-3}
&\|f(G) - \E f(G)\|_p\\
&\le C(D)K^{D-1} \Big(\E \|f(G) - \E f(G)\|+ \sum_{1\le d\le D}\sum_{\emptyset \neq T\subset [d]} \sum_{\pp \in \pp(T)} p^{\frac{|\pp|}{2}} \altnor{\E \D^df(G)}\Big)\nonumber
\end{align}
and for all $t \ge C(D) K^{D-1}\E \|f(G) - \E f(G)\|$,
\begin{align}\label{eq:general-poly-4}
\Pro\Big(\|f(G) - \E f(G)\| \ge t)\Big) \le 2\exp\Big(-C(D)^{-1} K^{2-2D}\eta_f(t)\Big).
\end{align}
\end{twr}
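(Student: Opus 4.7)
The plan is to reduce Theorem \ref{thm:general-polynomials} to the homogeneous tetrahedral case already covered by Theorem \ref{thm:gl} and Corollary \ref{cor:special-spaces}, by decomposing $f(G)-\E f(G)$ into its Wiener chaos components.

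First I would invoke the Wiener--Hermite chaos decomposition, which for the polynomial $f$ of degree $D$ reduces to a finite sum
$$f(G)-\E f(G)=\sum_{d=1}^{D}h_d,\qquad h_d=\frac{1}{d!}I_d\bigl(\E\D^d f(G)\bigr),$$
where $I_d(B)$ is the $d$-th multiple Wiener--It\^o integral of the symmetric $d$-tensor $B$ (in finite dimension a finite Hermite-polynomial sum). The identity $\E[f(G)\cdot\partial^\alpha\text{-Hermite}(G)]=\E\partial^\alpha f(G)$ makes the dependence on $\E\D^d f(G)$ transparent. Applying the classical Kwapie\'n / de la Pe\~na--Montgomery--Smith decoupling and symmetrization estimates (cf.\ \cite{pen,kwa}), the norm of $h_d$ in $L^p(\Omega;F)$ is comparable, up to constants depending only on $d$, to that of the fully decoupled tetrahedral chaos of order $d$ with coefficient tensor $\tfrac{1}{d!}\E\D^d f(G)$, that is, to the random variable to which Theorem \ref{thm:gl} applies.

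For the upper bound \eqref{eq:general-poly-3} I would then apply Corollary \ref{cor:special-spaces} to each decoupled $h_d$, obtaining
$$\|h_d\|_p\le C(d)K^{d-1}\sum_{T\subset[d]}\sum_{\pp\in\pp(T)}p^{|\pp|/2}\altnor{\E\D^d f(G)},$$
and sum over $d=1,\ldots,D$; the term $\E\|f(G)-\E f(G)\|$ on the right of \eqref{eq:general-poly-3} is then absorbed for free since $\E\|f(G)-\E f(G)\|\le\sum_d\|h_d\|_p$. For the lower bound \eqref{eq:general-poly-1} I would isolate $h_d$ from the sum via the Ornstein--Uhlenbeck semigroup: since $T_t h_{d'}=e^{-d't}h_{d'}$, picking $D$ distinct times $t_1,\ldots,t_D$ and inverting the resulting Vandermonde system expresses $h_d$ as $\sum_{k=1}^D\lambda_{d,k}(D)\,T_{t_k}(f(G)-\E f(G))$; as $T_t$ is a contraction on $L^p(\Omega;F)$ this yields $\|h_d\|_p\le C(D)\|f(G)-\E f(G)\|_p$. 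Combined with the Banach-space-general lower bound in Theorem \ref{thm:gl} applied to the decoupled version of $h_d$ and with the trivial $\E\|f(G)-\E f(G)\|\le\|f(G)-\E f(G)\|_p$, this gives \eqref{eq:general-poly-1} after summing over $d$ and absorbing the resulting factor $D+1$ into $C(D)$. The tail bounds \eqref{eq:general-poly-2} and \eqref{eq:general-poly-4} then follow from \eqref{eq:general-poly-1} and \eqref{eq:general-poly-3} by the usual Chebyshev / Paley--Zygmund argument, exactly as Theorem \ref{thm:og} is deduced from Theorem \ref{thm:gl}.

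The main obstacle, hidden in the decomposition step, is to verify that the decoupled tetrahedral chaos representing $h_d$ really carries coefficient tensor $\tfrac{1}{d!}\E\D^d f(G)$ with the norms $\nor{\cdot}$ and $\altnor{\cdot}$ matching those appearing on the right of \eqref{eq:general-poly-1} and \eqref{eq:general-poly-3}. The subtlety is that $I_d(B)$ is a Wick-ordered product, which on repeated indices differs from the ordinary monomial by lower-order Hermite corrections; one must check that these corrections live entirely in the strictly lower chaoses $h_{d'}$, $d'<d$, so that they do not inflate the coefficient of $h_d$. A cleaner route bypassing Wick calculus altogether is to use infinite divisibility directly: write $G\stackrel{d}{=}\tfrac{1}{\sqrt N}\sum_{k=1}^N G^{(k)}$ with $G^{(k)}$ i.i.d.\ standard Gaussians, expand $f(G)$ as a polynomial in $G^{(1)},\ldots,G^{(N)}$, identify the fully decoupled multilinear component of each degree $d\le D$ (whose coefficient tensor is exactly $\tfrac{1}{d!}\E\D^d f(G)$ by computing the mean in the remaining copies), and pass to the limit $N\to\infty$ so that the non-multilinear remainders vanish in probability and the multilinear components converge to the $h_d$'s.
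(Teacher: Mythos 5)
Your proposal is correct, and the route you ultimately recommend is the paper's own: Proposition \ref{prop:splitting-derivatives} is proved exactly by the blow-up $g = N^{-1/2}\sum_{j\le N} g_{j,N}$ (realized via Brownian increments and the Hermite representation of $f$), extraction of the homogeneous tetrahedral parts by Kwapie\'n's Theorem \ref{thm_Kwapien}, decoupling via Theorem \ref{thm:decoupling}, and a passage to the limit in which the diagonal corrections vanish and the coefficient tensor is identified as $\frac{1}{d!}\E \D^d f(G)$; the theorem then follows from Theorem \ref{thm:gl}, Corollary \ref{cor:special-spaces} and the Chebyshev/Paley--Zygmund argument of Theorem \ref{thm:og}, just as you describe. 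You also correctly isolate the one real subtlety: the decoupling results of \cite{kwa,pen} apply to tetrahedral (off-diagonal) forms, so $I_d(\E\D^d f(G))$ cannot be decoupled directly when the kernel charges the diagonal --- circumventing this is precisely the purpose of the approximation argument. The only genuine divergence is your Ornstein--Uhlenbeck/Vandermonde device for isolating $h_d$ in the lower bound; it is a valid Gaussian-specific substitute for Kwapie\'n's lemma (by Mehler's formula $T_t$ is a contraction on $L^p(\Omega;F)$ for any Banach space, and the eigenvalues $e^{-dt}$ are distinct), but note that after isolating $h_d$ this way you still need the two-sided comparison of the Wick-ordered $h_d$ with its decoupled version, so the blow-up step is not actually avoided. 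One small slip: in \eqref{eq:general-poly-3} the expectation term is not ``absorbed for free'' by $\E\|f(G)-\E f(G)\|\le \sum_d \|h_d\|_p$; what is needed there is the reverse inequality $\sum_d \E\|\langle \E\D^d f(G), G_1\otimes\cdots\otimes G_d\rangle\| \le C(D)\,\E\|f(G)-\E f(G)\|$, i.e.\ the $p=1$ case of the two-sided comparison, which your decomposition does in fact deliver.
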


The above theorem is an easy consequence of results for homogeneous decoupled chaoses and the following proposition, the proof of which (as well as the proof of the theorem) will be presented in Section \ref{sec:proofs}.

\begin{prep}\label{prop:splitting-derivatives} Let $F$ be a Banach space, $G$ a standard Gaussian vector in $\R^n$ and $f\colon \R^n \to F$ be a polynomial of degree $D$. Then for $p\ge 1$,
\begin{displaymath}
  \|f(G) - \E f(G)\|_p \sim^D \sum_{d=1}^D \Big \|\sum_{i_1,\ldots,i_d=1}^n a^d_{i_1,\ldots,i_d} g_{i_1}^1\cdots g_{i_d}^d\Big\|_p,
\end{displaymath}
where the $d$-indexed $F$-valued matrices $A_d = (a_{i_1,\ldots,i_d})_{i_1,\ldots,i_d\le n}$ are defined as $A_d = \E \nabla^d f(G)$.
\end{prep}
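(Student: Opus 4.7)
The plan has three steps: (a) decompose $f(G)-\E f(G)$ into its Wiener chaos components $P_d f(G)$, $d=1,\ldots,D$, and show their $L^p$ norms are comparable (up to $D$-dependent constants) to that of $f(G)-\E f(G)$; (b) identify the symmetric kernel of $P_d f$ with $\tfrac{1}{d!}\E\nabla^d f(G) = \tfrac{1}{d!}A_d$ via Gaussian integration by parts; (c) apply the standard decoupling inequalities to pass from the Hermite-type chaos to the decoupled tetrahedral one.

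For step (a), I would exploit the Mehler formula: if $G'$ is an independent copy of $G$, then for every $s\in[-1,1]$,
\begin{align*}
\sum_{d=0}^D s^d P_d f(G) = \E_{G'}\bigl[f(sG + \sqrt{1-s^2}\,G')\bigr],
\end{align*}
and since $sG+\sqrt{1-s^2}\,G' \stackrel{d}{=} G$, Jensen's inequality yields $\|\sum_d s^d P_d f(G)\|_p \le \|f(G)\|_p$ for every such $s$. Evaluating this $F$-valued polynomial in $s$ at $D+1$ distinct points and inverting the associated Vandermonde matrix (Lagrange interpolation) gives $\|P_d f(G)\|_p \le C(D)\|f(G)\|_p$, which applied to $f-\E f$ (noting $P_d(f-\E f)=P_d f$ for $d\ge 1$) yields $\sum_{d=1}^D \|P_d f(G)\|_p \le C(D)\|f(G)-\E f(G)\|_p$. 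The reverse direction is the triangle inequality.

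For step (b), $P_d f(G)$ is a polynomial of pure Hermite-degree $d$, so in the multivariate Hermite basis $P_d f(G) = \sum_{|\alpha|=d} c_\alpha H_\alpha(G)$ with $c_\alpha \in F$. Combining the orthogonality relation $\E[H_\alpha(G)H_\beta(G)]=\alpha!\,\delta_{\alpha,\beta}$ with the Gaussian integration-by-parts identity $\E[H_\alpha(G)f(G)]=\E[\partial^\alpha f(G)]$ (which follows from Stein's lemma applied $|\alpha|$ times) gives $c_\alpha = \tfrac{1}{\alpha!}\E[\partial^\alpha f(G)]$. Equivalently, $\tfrac{1}{d!}A_d$ is the symmetric kernel of $P_d f$.

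For step (c), the classical decoupling theorem (\cite{pen,kwa}), which applies in the Banach-valued setting as its proof relies only on conditional expectations of multi-indexed sums, gives
\begin{align*}
\|P_d f(G)\|_p \sim^d \Big\|\sum_{i_1,\ldots,i_d=1}^n (A_d)_{i_1,\ldots,i_d}\, g^1_{i_1}\cdots g^d_{i_d}\Big\|_p,
\end{align*}
the factor $1/d!$ being absorbed into the $d$-dependent constant. Combining (a)--(c) yields the proposition. The main obstacle is step (a): one must control each chaos projection uniformly in $p$, since plain hypercontractivity would introduce an unwanted $p^{D/2}$ loss. The Mehler averaging plus polynomial interpolation argument circumvents this cleanly and appears to be the only non-routine ingredient.
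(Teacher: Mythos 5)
Your steps (a) and (b) are correct, and step (a) is a genuinely different route from the paper's: instead of approximating $f$ by tetrahedral polynomials in Brownian increments and invoking Kwapie\'n's inverse triangle inequality (Theorem \ref{thm_Kwapien}), you extract the chaos projections $P_d f$ directly from the Mehler/Ornstein--Uhlenbeck contraction $\|\sum_d s^d P_d f(G)\|_p\le\|f(G)\|_p$ by Lagrange interpolation at $D+1$ points. This is clean, loses nothing in $p$, and works for Banach-valued $f$ because the semigroup is given by an integral formula to which Jensen's inequality applies. The identification $c_\alpha=\frac{1}{\alpha!}\E \partial^\alpha f(G)$ in step (b) is also fine and matches the paper's computation \eqref{eq:Df-and-b}.

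The gap is in step (c). Theorem \ref{thm:decoupling} (and the results of \cite{pen,kwa} that it specializes) compares $\sum_{i_1,\ldots,i_d} a_{i_1,\ldots,i_d}X_{i_1}\cdots X_{i_d}$ with its decoupled version only under the hypothesis that $a$ vanishes on generalized diagonals, i.e.\ for tetrahedral forms. But $P_d f(G)=\sum_{|\alpha|=d}c_\alpha H_\alpha(G)$ is not a tetrahedral form in $g_1,\ldots,g_n$: whenever some $\alpha_i\ge 2$ it contains the factor $h_{\alpha_i}(g_i)$, a genuine higher power of a single coordinate. So the theorem you cite does not apply to it as written, and the asserted equivalence $\|P_d f(G)\|_p\sim^d\|\langle A_d,G_1\otimes\cdots\otimes G_d\rangle\|_p$ --- decoupling for Wick products, where moreover the decoupled side retains nonzero diagonal entries --- is a different (true but nontrivial) statement needing its own proof. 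This is precisely where the bulk of the paper's argument lives: each $h_k(g_i)$ is written as an $L^p$ limit of $N^{-k/2}\sum g^{(i)}_{j_1,N}\cdots g^{(i)}_{j_k,N}$ over pairwise distinct $j_1,\ldots,j_k\in[N]$ (Brownian increments), producing honest tetrahedral forms in $nN$ variables to which Theorem \ref{thm:decoupling} does apply; one then checks, via the convolution property of Gaussians and an $L^2$ bound on the discarded diagonal entries, that the decoupled blown-up forms converge to $\frac{1}{d!}\langle A_d,G_1\otimes\cdots\otimes G_d\rangle$. You need either to import this construction (after which your step (a) could replace the paper's use of Kwapie\'n's theorem) or to cite a decoupling result stated directly for multiple Wiener--It\^o integrals; as it stands, step (c) is not justified.
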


\subsection{$L_q$ spaces}
It turns out that    $L_q$ spaces satisfy  \eqref{warunek1} and as a result
upper and lower bounds in \eqref{ine:gl} are comparable. Moreover, as is shown in Lemma \ref{det} below,  in this case one may express all the parameters without any expectations. For the sake of brevity, we will focus on moment estimates, clearly tail bounds follow from them by standard arguments (cf. the proof of Theorem \ref{thm:og}).

\begin{prop}
\label{ft:Lq}
For $q \ge 1$ the space $L_q(X,\mu)$ satisfies \eqref{warunek1} with $K=C\sqrt{q}$.
\end{prop}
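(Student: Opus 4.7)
The strategy is to bridge both sides of \eqref{warunek1} with a single deterministic quantity, namely the ``square function''
\[
M := \Bigl\| \Bigl(\textstyle\sum_{i,j} b_{i,j}^{\,2}\Bigr)^{1/2} \Bigr\|_{L_q(X,\mu)}.
\]
The upper bound on the Gaussian sum of single index uses Jensen plus Fubini; the lower bound on the decoupled chaos uses Minkowski's integral inequality combined with a scalar (i.e.\ fixed-$x$) real Kahane--Khintchine estimate for Gaussian chaos of order $2$. Crucially, I will apply moment equivalence at the \emph{pointwise scalar} level, never at the Banach-space level, since each scalar Kahane--Khintchine costs only a universal constant, whereas invoking it for $L_q$-valued chaos would lose an extra factor of $q$.

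\textbf{Step 1 (upper bound, producing $\sqrt q \cdot M$).} Since $q\ge 1$, Jensen gives
\[
\E \Bigl\| \sum_{i,j} b_{i,j} g_{i,j} \Bigr\|_{L_q} \le \Bigl( \E \Bigl\| \sum_{i,j} b_{i,j} g_{i,j} \Bigr\|_{L_q}^{q}\Bigr)^{1/q}.
\]
Swapping expectation and integration via Fubini, and using that for each fixed $x\in X$ the variable $\sum_{i,j} b_{i,j}(x) g_{i,j}$ is centered Gaussian with variance $\sum_{i,j} b_{i,j}(x)^2$, the right-hand side equals $(\E |g|^q)^{1/q}\cdot M$, which is at most $C\sqrt q\cdot M$ by the standard asymptotics of Gaussian absolute moments.

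\textbf{Step 2 (lower bound, producing $c\, M$ from the decoupled chaos).} Minkowski's integral inequality (valid for $q\ge 1$) yields
\[
\E \Bigl\| \sum_{i,j} b_{i,j} g_i g'_j \Bigr\|_{L_q} \ge \Bigl( \int_X \Bigl(\E \Bigl|\sum_{i,j} b_{i,j}(x) g_i g'_j\Bigr|\Bigr)^{q} d\mu(x)\Bigr)^{1/q}.
\]
For each $x$, the quantity inside the integral is a \emph{real}-valued decoupled Gaussian chaos of order $2$, so by Kahane--Khintchine (equivalently, hypercontractivity) its $L_1$ and $L_2$ norms are equivalent up to a universal constant, giving
\[
\E \Bigl|\sum_{i,j} b_{i,j}(x) g_i g'_j\Bigr| \ge c\Bigl(\sum_{i,j} b_{i,j}(x)^2\Bigr)^{1/2}.
\]
Substituting yields $\E \bigl\| \sum_{i,j} b_{i,j} g_i g'_j \bigr\|_{L_q}\ge c\, M$.

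\textbf{Conclusion and main point.} Combining Steps 1 and 2 gives \eqref{warunek1} with $K = C\sqrt q$. There is no real obstacle; the whole proof is a couple of lines once one has chosen the correct intermediate quantity $M$. The only subtlety worth highlighting is that one must apply moment comparison only to \emph{real-valued} chaoses inside the integral over $X$: a naive argument that first passed to $L_q$-valued moment equivalence for the decoupled chaos would contribute an extra factor of $q$ (since second-order Gaussian chaos is hypercontractive with $p$-factor $p$, not $\sqrt p$), spoiling the target constant $\sqrt q$.
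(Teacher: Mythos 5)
Your proof is correct, but it takes a genuinely different route from the paper. The paper's proof is a two-line citation: it invokes \cite[Theorem 7.1.20]{Hytonen}, which says that a space of type $2$ with constant $T$ satisfies \eqref{warunek1} with $K=T$, and then uses the classical fact that the type $2$ constant of $L_q$ is of order $\sqrt q$. You instead give a direct, self-contained computation specific to $L_q$: both sides of \eqref{warunek1} are compared to the square function $M=\bigl\|(\sum_{i,j}b_{i,j}^2)^{1/2}\bigr\|_{L_q}$, the upper bound via Jensen, Fubini and the scalar Gaussian moment $(\E|g|^q)^{1/q}\le C\sqrt q$, and the lower bound via Minkowski's integral inequality plus the universal $L_1$--$L_2$ equivalence for \emph{real-valued} order-$2$ decoupled chaos. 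All steps check out, and your remark about applying moment equivalence only pointwise in $x$ (to avoid the order-$2$ hypercontractive factor $q$) is exactly the right precaution. Your argument has two advantages: it is elementary and avoids the external type-$2$ machinery, and it is valid verbatim for the whole range $q\ge 1$, whereas the paper's proof as literally stated leans on $L_q$ being of type $2$, which fails for $1\le q<2$ (in that range the paper's claim still holds for other reasons, e.g.\ finite cotype of the lattice $L_q$, but the type-$2$ argument does not apply directly). What the paper's approach buys in exchange is brevity and the observation that the estimate holds in any type $2$ space, not just $L_q$; in fact this is the same mechanism behind the surrounding discussion of Gaussian property $(\alpha+)$ in Section \ref{sec:two-sided}.
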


\begin{proof} From \cite[Theorem 7.1.20]{Hytonen} it follows that if $F$ is of type 2 with constant $T$, then it satisfies \eqref{warunek1} with $K = T$, while it is well known that the type 2 constant of $L_q(X,\mu)$ is of order
$\sqrt{q}$.
\end{proof}

%\begin{proof}
%Observe that \eqref{warunek1} reads as
%$$
%\E \left( \int_X \left| \sum_{i,j} b_{ij}(x) g_{ij} \right|^q d\mu(x) \right)^{1/q} \leq K \E \left( \int_X \left| \sum_{i,j} b_{ij}(x) g_i g'_j \right|^q d\mu(x) \right)^{1/q},
%$$
%where $(b_{ij})_{ij}$ is a matrix with values in $L_q(X,d\mu)$.
%Jensen's inequality and Fubini's theorem imply
%\begin{align*}
%\E \left( \int_X \left| \sum_{i,j} b_{ij}(x) g_{ij} \right|^q d\mu(x) \right)^{1/q}
%&\leq \left(  \int_X \E  \left| \sum_{i,j} b_{ij}(x) g_{ij} \right|^q d\mu(x) \right)^{1/q}
%\\
%&\phantom{aaaaa}\leq C \sqrt{q} \left(\int_X \left(\sum_{i,j} b^2_{ij}(x) \right)^{q/2} d\mu(x)   \right)^{1/q}.
%\end{align*}

%On the other hand Theorem \ref{gaushiper} {\blue(applied with $p=1$, $d=2$ and $F=L_q$)} yields
%\begin{align*}
%Cq\E \left( \int_X \left| \sum_{i,j} b_{ij}(x) g_i g'_j \right|^q d\mu(x) \right)^{1/q} &\geq  \left(\int_X \E \left| \sum_{i,j} b_{ij}(x) g_i g'_j \right|^q d\mu(x)  %\right)^{1/q} \\
%&\geq C^{-1}\sqrt{q} \left(\int_X \left(\sum_{i,j} b^2_{ij}(x) \right)^{q/2} d\mu(x)   \right)^{1/q},
%\end{align*}
%where in the last inequality we used Corollary \ref{lgaus}.
%\end{proof}

For a multi-indexed matrix $A$ of order $d$ with values in $L_q(X,\mu)$ and $J\subset [d]$, $\pp=(I_1,\ldots,I_k) \in \pp([J])$ we define
$$
\altnor{A}^{L_q}=\sup \left\{\lv  \sqrt{\sum_{i_{[d]\setminus J}} \left(\sum_{i_J} a_\ii \prod_{r=1}^k x^r_{i_{I_r}} \right)^2} \rv_{L_q}  \ \Big{|} \ \forall_{r\leq k} \sum_{i_{I_r}} \left(  x^r_{i_{I_r}} \right)^2 \leq 1\right\} .
$$

For $J = [d]$ and $\pp \in \pp([d])$ we obviously have $\altnor{A}^{L_q} = \altnor{A}$. The following lemma asserts that for general $J$ the corresponding two norms are comparable.
\begin{lem}\label{det}
For any $J \subsetneq [d]$, $\pp=(I_1,\ldots,I_k) \in \pp(J)$ and any multi-indexed matrix $A$ of order $d$ with values in $L_q(X,\mu)$ we have
\begin{align*}
C(d)^{-1}q^{\frac{1-d+|J|}{2}} \altnor{A}^{L_q}\leq \altnor{A}\leq C(d)q^{\frac{d-|J|}{2}}\altnor{A}^{L_q}.
\end{align*}
\end{lem}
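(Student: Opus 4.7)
The plan is to reduce the lemma to a pointwise comparison after fixing admissible vectors $x^1,\ldots,x^k$, then handle the upper and lower bounds separately using standard pointwise Gaussian chaos inequalities. Concretely, I would fix $(x^r)_{r=1}^k$ with $\sum_{i_{I_r}}(x^r_{i_{I_r}})^2\le 1$, set $m:=d-|J|\ge 1$, and introduce the $L_q$-valued contractions
\[
b_{i_{[d]\setminus J}}:=\sum_{i_J}a_\ii\prod_{r=1}^k x^r_{i_{I_r}}\in L_q(X,\mu)
\]
together with the decoupled Gaussian chaos
\[
Z:=\sum_{i_{[d]\setminus J}} b_{i_{[d]\setminus J}}\prod_{l\in[d]\setminus J}g^l_{i_l}
\]
of order $m$, with values in $L_q$. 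Since the suprema defining $\altnor{A}$ and $\altnor{A}^{L_q}$ range over the same admissible set, the lemma reduces to establishing $\E\lv Z\rv_{L_q}\sim \lv(\sum b^2)^{1/2}\rv_{L_q}$ up to the claimed $q$-dependent factors, uniformly in $(x^r)$.

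For the upper bound I would apply Jensen and Fubini to obtain
\[
\E\lv Z\rv_{L_q}\le\bigl(\E\lv Z\rv_{L_q}^q\bigr)^{1/q}=\Big(\int_X\E|Z(y)|^q\,d\mu(y)\Big)^{1/q},
\]
and then use, pointwise in $y$, Nelson's hypercontractive inequality for the scalar decoupled Gaussian chaos $Z(y)$ of order $m$: $(\E|Z(y)|^q)^{1/q}\le C(m)q^{m/2}(\E Z(y)^2)^{1/2}=C(m)q^{m/2}(\sum b(y)^2)^{1/2}$. Integrating over $y$ produces the upper bound $C(d)q^{m/2}\altnor{A}^{L_q}$.

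For the lower bound I would invoke the generalised Minkowski integral inequality (valid since $q\ge 1$) to swap the order of the outer expectation and the $L_q(\mu)$-norm,
\[
\E\lv Z\rv_{L_q(\mu)}=\int_\Omega\lv Z(\omega,\cdot)\rv_{L_q(\mu)}\,d\Pro(\omega)\ge\Big(\int_X(\E|Z(y)|)^q\,d\mu(y)\Big)^{1/q},
\]
and then invoke, for each fixed $y$, the reverse Kahane--Khintchine bound $\E|Z(y)|\ge c(m)(\E Z(y)^2)^{1/2}=c(m)(\sum b(y)^2)^{1/2}$ for scalar Gaussian chaos of order $m$, a standard consequence of hypercontractivity combined with Paley--Zygmund that is recalled in the Appendix. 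Integrating yields $\E\lv Z\rv_{L_q}\ge c(d)\altnor{A}^{L_q}$, and taking the supremum over $(x^r)$ in fact gives a bound strictly stronger than the one claimed (since $q^{(1-d+|J|)/2}\le 1$ for $q\ge 1$ and $|J|\le d-1$).

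I do not expect any genuine obstacle: once one recognises that the generalised Minkowski integral inequality is the correct device for swapping $\E$ and $\lv\cdot\rv_{L_q}$ in the lower bound, everything reduces to well-known scalar pointwise Gaussian chaos moment estimates. If anything, the observation worth highlighting is that the proof actually delivers a $q$-free lower bound, and the $q^{(1-d+|J|)/2}$ factor in the statement is there only for symmetry with the upper bound.
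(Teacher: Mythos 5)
Your proof is correct, and while your upper bound is exactly the paper's argument (Jensen, Fubini, then the pointwise moment bound for the scalar chaos of order $m=d-|J|$, as in Corollary \ref{lgaus}), your lower bound takes a genuinely different and in fact sharper route. The paper first applies hypercontractivity (Theorem \ref{gaushiper} with $p=1$) to the \emph{$L_q$-valued} chaos to pass from $\E\lv Z\rv_{L_q}$ to $(\E\lv Z\rv_{L_q}^q)^{1/q}$ at a cost of $q^{-m/2}$, then uses Fubini and the pointwise lower bound of Corollary \ref{lgaus} with $p=q$ to gain back a factor $\sqrt q$, which is exactly where the exponent $\frac{1-d+|J|}{2}$ comes from. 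You instead bypass the Banach-space hypercontractivity entirely by applying the generalised Minkowski integral inequality to swap $\E$ with $\lv\cdot\rv_{L_q(\mu)}$ (valid since $q\ge 1$, and in the correct direction), and then only need the $q$-free pointwise comparison $\E|Z(y)|\ge c(m)(\E Z(y)^2)^{1/2}$. This yields $\altnor{A}\ge c(d)\,\altnor{A}^{L_q}$ with no power of $q$, which is strictly stronger than the stated lower bound whenever $d-|J|\ge 2$ (and coincides with it when $d-|J|=1$); propagated through Theorem \ref{Lq} it would also improve the $q^{\frac{1-d}{2}}$ prefactor there. The only points worth making explicit in a write-up are that the pointwise reverse bound follows from Theorem \ref{gaushiper} with $p=1,q=2$ (so the constant depends only on $m$), and that for $1\le q<2$ the hypercontractive step in the upper bound should be replaced by the trivial Jensen comparison with the second moment.
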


\begin{proof}
By Jensen's inequality and  Corollary \ref{lgaus} we get
\begin{align*}
\altnor{A}&\leq \sup \left\{ \left(\int_X \E \left|\sum_\ii a_\ii(x) \prod_{j \in [d]\setminus J} g^j_{i_j} \prod_{r=1}^k x^r_{i_{I_r}} \right|^q d\mu(x) \right)^{1/q} \ \Big{|} \ \forall_{r\leq k} \sum_{i_{I_r}} \left(  x^r_{i_{I_r}} \right)^2 \leq 1 \right\} \\
&\leq C(d) q^{\frac{d-|J|}{2}} \sup \left\{ \lv \sqrt{\sum_{i_{[d]\setminus J}} \left(\sum_{i_J} a_\ii  \prod_{r=1}^k x^r_{i_{I_r}}  \right)^2}\rv_{L_q} \ \Big{|} \ \forall_{r\leq k} \sum_{i_{I_r}} \left(  x^r_{i_{I_r}} \right)^2 \leq 1 \right\}.
\end{align*}
 On the other hand Theorem \ref{gaushiper} (applied with $p=1$) and Corollary \ref{lgaus}  yield
\begin{align*}
\altnor{A}
&\geq \frac{q^{\frac{|J|-d}{2}}}{C(d)} \sup \left\{ \left(\E \lv\sum_\ii a_\ii(x) \prod_{j \in [d]\setminus J} g^j_{i_j} \prod_{r=1}^k x^r_{i_{I_r}} \rv_{L_q}^q \right)^{1/q} \ \Big{|} \ \forall_{r\leq k} \sum_{i_{I_r}} \left(  x^r_{i_{I_r}} \right)^2 \leq 1 \right\}
\\
&\geq  \frac{q^{\frac{1-d+|J|}{2}}}{C(d)}\sup \left\{\lv  \sqrt{\sum_{i_{[d]\setminus J}} \left(\sum_{i_J} a_\ii \prod_{r=1}^k  x^r_{i_{I_r}}\right)^2} \rv_{L_q}  \ \Big{|} \ \forall_{r\leq k} \sum_{i_{I_r}} \left(  x^r_{i_{I_r}} \right)^2 \leq 1\right\}.
\end{align*}
\end{proof}

\begin{twr}\label{Lq}
Let $q \ge 1$ and let $A=(a_\id)_\id$ be a matrix with values in $ L_q(X,\mu) $. Then for any $p\geq 1$ we have
\begin{align*}
\frac{1}{C(d)} q^{\frac{1-d}{2}}\sum_{J\subset [d]} \sum_{\pp\in \pp([J])} p^{\frac{|\pp|}{2}}\altnor{A}^{L_q}
&\leq
\lv \sum_\id a_\id \pg \rv_p
\\
&\phantom{aaaaaaaa}
\leq C(d)q^{ d-\frac{1}{2}} \sum_{J\subset [d]} \sum_{\pp \in \pp([J])} p^{\frac{|\pp|}{2}} \altnor{A}^{L_q}.
\end{align*}

\end{twr}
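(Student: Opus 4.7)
The proof is essentially a direct combination of three ingredients already in hand: the upper bound of Corollary \ref{cor:special-spaces} applied to $L_q$ (which satisfies \eqref{warunek1} with constant $K=C\sqrt{q}$ by Proposition \ref{ft:Lq}), the lower bound of Theorem \ref{thm:gl}, and the comparison between $\altnor{A}$ and $\altnor{A}^{L_q}$ provided by Lemma \ref{det}. No new analytic ideas are needed; the whole argument is just careful bookkeeping of the powers of $q$.

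For the upper bound I would first invoke Corollary \ref{cor:special-spaces} with $K=C\sqrt{q}$ to obtain
\begin{equation*}
\Big\| \sum_\id a_\id \pg \Big\|_p \leq C(d)\,q^{(d-1)/2} \sum_{J\subset [d]}\sum_{\pp \in \pp(J)} p^{|\pp|/2} \altnor{A}.
\end{equation*}
Then I would substitute the right-hand inequality of Lemma \ref{det}, namely $\altnor{A}\leq C(d) q^{(d-|J|)/2}\altnor{A}^{L_q}$ for $J\subsetneq [d]$ (with equality for $J=[d]$). The resulting exponent of $q$ is at most $(d-1)/2+(d-|J|)/2=(2d-1-|J|)/2$, and this is maximized at $|J|=0$, giving $q^{d-1/2}$, which is exactly the exponent in the statement.

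For the lower bound I would start from the left-hand inequality of Theorem \ref{thm:gl},
\begin{equation*}
\Big\| \sum_\id a_\id \pg \Big\|_p \geq \frac{1}{C(d)} \sum_{J\subset [d]} \sum_{\pp \in \pp(J)}p^{|\pp|/2}\, \altnor{A},
\end{equation*}
and substitute the left-hand inequality of Lemma \ref{det}, $\altnor{A}\geq C(d)^{-1} q^{(1-d+|J|)/2}\altnor{A}^{L_q}$. This exponent is minimized at $|J|=0$, yielding a uniform factor $q^{(1-d)/2}$ that can be pulled outside the sum.

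The only step requiring any thought is verifying the edge case $J=[d]$ in both directions, where Lemma \ref{det} is formulated for $J\subsetneq [d]$; but in that case $\altnor{A}=\altnor{A}^{L_q}$ by definition, so the inequalities hold trivially (indeed better than claimed). I do not foresee a substantive obstacle: once Corollary \ref{cor:special-spaces}, Proposition \ref{ft:Lq}, and Lemma \ref{det} are in place, Theorem \ref{Lq} is a one-line consequence modulo reading off the extremal value of $|J|$ in each direction.
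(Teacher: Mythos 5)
Your proposal is correct and matches the paper's proof, which derives Theorem \ref{Lq} exactly as an immediate consequence of Theorem \ref{thm:gl}, Corollary \ref{cor:special-spaces}, Proposition \ref{ft:Lq} and Lemma \ref{det}. Your bookkeeping of the powers of $q$ (worst case $|J|=0$ in both directions, yielding $q^{d-1/2}$ and $q^{(1-d)/2}$) and your handling of the edge case $J=[d]$ are both accurate.
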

\begin{proof}
This is an obvious consequence of Theorem \ref{thm:gl}, Corollary \ref{cor:special-spaces}, Proposition \ref{ft:Lq} and Lemma \ref{det}.
\end{proof}

Using Proposition \ref{prop:splitting-derivatives} we can extend the above result to general polynomials.

\begin{twr}
Let $G$ be a standard Gaussian vector in $\R^n$ and $f\colon \R^n \to L_q(X,\mu)$ ($q \ge 1$) a polynomial of degree $D$. Then for $p \ge 1$, we have
\begin{align*}
\frac{1}{C(D)} \sum_{d=1}^D q^{\frac{1-d}{2}} &\sum_{J\subset [d]} \sum_{\pp \in \pp([J])} p^{\frac{|\pp|}{2}} \altnor{\E \D^d f(G)}^{L_q}
\leq
\|f(G) - \E f(G)\|_p
\\
 &\le C(D)\sum_{d=1}^D q^{d - \frac{1}{2}} \sum_{J\subset [d]} \sum_{\pp \in \pp([J])} p^{\frac{|\pp|}{2}} \altnor{\E \D^d f(G)}^{L_q}.
\end{align*}
\end{twr}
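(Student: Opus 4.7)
The plan is to combine Proposition \ref{prop:splitting-derivatives} with Theorem \ref{Lq} applied term by term. No new technical ideas should be needed beyond bookkeeping on constants depending on $D$ and $q$.

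First I would invoke Proposition \ref{prop:splitting-derivatives}, which reduces the problem of controlling $\|f(G) - \E f(G)\|_p$ to controlling the sum $\sum_{d=1}^D \lv \sum_\id a^d_\id g^1_{i_1}\cdots g^d_{i_d} \rv_p$, where $A_d = \E \nabla^d f(G)$ is an $L_q$-valued matrix of order $d$. Crucially this is an equivalence up to constants $C(D)$, so both the upper and lower bound in the target statement will follow by treating each homogeneous decoupled chaos separately and then summing.

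Next, for each fixed $d \in \{1,\ldots,D\}$ I apply Theorem \ref{Lq} to the matrix $A_d$. This immediately yields
\begin{align*}
\frac{1}{C(d)} q^{\frac{1-d}{2}} \sum_{J\subset [d]} \sum_{\pp\in \pp(J)} p^{\frac{|\pp|}{2}} \altnor{A_d}^{L_q}
&\leq
\lv \sum_\id a^d_\id g^1_{i_1}\cdots g^d_{i_d} \rv_p \\
&\leq C(d) q^{d-\frac{1}{2}} \sum_{J\subset [d]} \sum_{\pp\in \pp(J)} p^{\frac{|\pp|}{2}} \altnor{A_d}^{L_q}.
\end{align*}
Summing these bounds over $d=1,\ldots,D$ and absorbing $C(d)$ into $C(D)$ gives exactly the right-hand and left-hand sides of the claimed inequality, since the $q$-exponents appearing in Theorem \ref{Lq} (namely $(1-d)/2$ on the lower side and $d-1/2$ on the upper side) match the ones in the target.

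The only point that requires a brief comment is the lower bound. Proposition \ref{prop:splitting-derivatives} gives $\|f(G) - \E f(G)\|_p \gtrsim_D \sum_{d=1}^D \lv \sum_\id a^d_\id g^1_{i_1}\cdots g^d_{i_d} \rv_p$, and this sum dominates each individual term, so in particular each $q^{(1-d)/2}\sum_{J,\pp} p^{|\pp|/2}\altnor{A_d}^{L_q}$ is controlled; summing these lower bounds and losing a further factor of $D$ (absorbed into $C(D)$) yields the desired estimate. Since the proof is essentially a direct assembly of Proposition \ref{prop:splitting-derivatives}, Theorem \ref{Lq} and the triangle inequality, I do not foresee any genuine obstacle; the only mildly delicate point is verifying that the constants depend solely on $D$ (and not separately on each $d\le D$), which is immediate because $d$ ranges over a finite set determined by $D$.
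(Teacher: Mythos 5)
Your proposal is correct and is exactly the argument the paper intends: the theorem is stated immediately after Theorem \ref{Lq} with the remark that it follows from Proposition \ref{prop:splitting-derivatives}, i.e., by decomposing $\|f(G)-\E f(G)\|_p$ into the decoupled homogeneous chaoses with coefficient matrices $\E\nabla^d f(G)$ and applying Theorem \ref{Lq} to each term. The bookkeeping of constants over the finitely many $d\le D$ is as routine as you indicate.
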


\begin{ex}
Consider a general polynomial of degree $3$, i.e.,
\begin{displaymath}
  f(G) = \sum_{i,j,k=1}^n a_{ijk}g_ig_jg_k + \sum_{i,j=1}^n b_{ij}g_i g_j + \sum_{i=1}^n c_i g_i + d,
\end{displaymath}
where the coefficients $a_{ijk}, b_{ij}, c_{i}, d$ take values in a Banach space and the matrices $(a_{ijk})_{ijk}$, $(b_{ij})_{ij}$ are symmetric.
Then one checks that
\begin{align*}
\E \nabla f(G) &= \Big(c_i + 3\sum_{j=1}^n a_{ijj}\Big)_{i=1}^n,\\
\E \nabla^2 f(G) & = 2(b_{ij})_{i,j=1}^n,\\
\E \nabla^3 f(G) & = \nabla^3 f(G) = 6(a_{ijk})_{i,j,k=1}^n.
\end{align*}
\end{ex}

\subsection{Exponential variables}
Theorem \ref{Lq} together with Lemma \ref{lem95} allows us to obtain inequalities for chaoses based on i.i.d standard symmetric exponential random variables (i.e. variables with density $2^{-1} \exp(-|t|)$) which are denoted by  $(E^i_j)_{i,j \in \N}$ below. Similarly as in the previous Section we concentrate only on the moment estimates.

\begin{prop}
\label{wyk}
Let $A=(a_\id)_\id$ be a matrix with values in $ L_q(X,\mu) $. Then for any $p\geq 1, \ q\geq 2$ we have
\begin{align*}
 \lv \sum_\ii a_\ii \prod_{k=1}^d E_{i_k}^{k} \rv_p \sim^{d,q}
 \sum_{I\subset [d]}\sum_{J\subset [d]\setminus I}
\sum_{\pp\in \pp([d]\setminus (I\cup J))} p^{|I|+|\pp|/2} \max_{i_{I}} \robn{(a_{\id})_{i_{I^c}}}^{L_q}_{\pp}.
\end{align*}
One can take $C^{-1}(d) q^{1/2-d} $ in the lower bound and $C(d) q^{2d-  1/2}$ in the upper bound.
%\begin{multline*}
%C^{-1}(d) q^{1/2-d} \sum_{J \subset [d], I \subset [d]\setminus J} \sum_{\pp \in \pp([d]\setminus (I\cup J))} p^{|I|+|\pp|/2} \max_{i_{I}} \robn{(a_{\id})_{i_{I^c}}}^{L^q}_{\pp}\\
% \le \lv \sum_\ii a_\ii \prod_{k=1}^d E^i_{i_k} \rv_p \\
% \leq C(d) q^d \sum_{J \subset [d], I \subset [d]\setminus J} \sum_{\pp \in \pp([d]\setminus (I\cup J))} p^{|I|+|\pp|/2} \max_{i_{I}} \robn{(a_{\id})_{i_{I^c}}}^{L_q}_{\pp}.
%\end{multline*}
\end{prop}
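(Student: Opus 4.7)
My strategy is to combine Lemma \ref{lem95}, which compares moments of tetrahedral chaoses in i.i.d.\ symmetric exponentials with moments of Gaussian chaoses after "freezing" some coordinates, with the two-sided $L_q$-bound on Gaussian chaoses established in Theorem \ref{Lq}. Specifically, I expect Lemma \ref{lem95} to yield a reduction of the form
\[
\lv \sum_\ii a_\ii \prod_{k=1}^d E^k_{i_k} \rv_p \sim^{d,q} \sum_{I \subset [d]} p^{|I|} \max_{i_I} \lv \sum_{i_{I^c}} a_\ii \prod_{k \in I^c} g^k_{i_k} \rv_p,
\]
the heuristic being that in each coordinate $k$ the exponential $E^k_{i_k}$ contributes either a "peak" worth $p$ in moments (localizing the index $i_k$ to a single value, hence the $\max_{i_I}$) or a Gaussian-type fluctuation worth $\sqrt{p}$.

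Once this reduction is in place, for each $I \subset [d]$ and each fixed $i_I$ the inner quantity is a homogeneous tetrahedral Gaussian chaos of order $|I^c|=d - |I|$ associated with the $L_q$-valued matrix $(a_\ii)_{i_{I^c}}$. Applying Theorem \ref{Lq} to each such chaos gives
\[
\lv \sum_{i_{I^c}} a_\ii \prod_{k \in I^c} g^k_{i_k} \rv_p \sim^{d,q} \sum_{J' \subset I^c} \sum_{\pp \in \pp(J')} p^{|\pp|/2} \robn{(a_\ii)_{i_{I^c}}}^{L_q}_{\pp}.
\]
Taking $\max_{i_I}$ and commuting it with the finite sum over $J'$ and $\pp$ costs only a $d$-dependent constant in the upper direction (since the max of a sum of nonnegatives is dominated by the sum of the maxes) while in the lower direction one simply retains a single term. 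Reindexing via $J := I^c \setminus J'$ so that $J' = [d] \setminus (I \cup J)$, and then summing over $I \subset [d]$, recovers the right-hand side of the proposition exactly.

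The main obstacle I foresee is the careful bookkeeping of the $q$-exponents. Theorem \ref{Lq} contributes a $q$-factor of $q^{(d-|I|-1)/2}$ in the upper direction and $q^{(1-d+|I|)/2}$ in the lower, whose worst values (attained at $|I|=0$) are $q^{d-1/2}$ and $q^{(1-d)/2}$ respectively. The target constants $q^{2d-1/2}$ (upper) and $q^{1/2-d}$ (lower) then force Lemma \ref{lem95} itself to carry an extra multiplicative factor of $q^{d}$ on the upper side and $q^{-d/2}$ on the lower side, consistent with an underlying representation of symmetric exponentials through pairs of Gaussian variables that effectively doubles the chaos order in the comparison. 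Once this constant tracking is in place, the remainder of the argument is a routine combinatorial rearrangement.
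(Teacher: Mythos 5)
There is a genuine gap at the very first step. The reduction you posit,
\[
\lv \sum_\ii a_\ii \prod_{k=1}^d E^k_{i_k} \rv_p \sim^{d,q} \sum_{I \subset [d]} p^{|I|} \max_{i_I} \lv \sum_{i_{I^c}} a_\ii \prod_{k \in I^c} g^k_{i_k} \rv_p,
\]
does not follow from Lemma \ref{lem95} and is not available as a black box. Lemma \ref{lem95} compares only \emph{first} moments of \emph{order-one} sums $\sum_i v_i\eps_i Y_i$ for the three choices of $Y_i$; it contains no $p^{|I|}\max_{i_I}$ structure whatsoever. In fact, once Theorem \ref{Lq} is granted, your displayed reduction is (up to $q$-dependent constants) \emph{equivalent} to the proposition you are trying to prove, so assuming it begs the question: already for $d=1$ and real coefficients it is the nontrivial Gluskin--Kwapie\'n bound $\lv\sum a_iE_i\rv_p\sim p\max_i|a_i|+\sqrt p\,\lv a\rv_2$, and for Banach-space-valued chaoses of higher order no such statement is known in general. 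Your own bookkeeping betrays the problem: you are forced to attribute $q$-dependent factors to Lemma \ref{lem95}, which has universal constants.

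The paper's route is different and shows where the real work lies. Lemma \ref{lem95} is used only to replace each $E^k_{i_k}$ by a product $g^k_{i_k}(g')^k_{i_k}$ of two independent Gaussians (applying the first-moment comparison coordinatewise in a suitable Banach space of $p$-integrable functions), which converts the order-$d$ exponential chaos into a \emph{decoupled Gaussian chaos of order $2d$} with the diagonal matrix $\hat a_{i_1,\ldots,i_{2d}}=a_\id\1_{\{i_1=i_{d+1},\ldots,i_d=i_{2d}\}}$. Theorem \ref{Lq} applied to this order-$2d$ chaos produces exactly the constants $q^{(1-2d)/2}=q^{1/2-d}$ and $q^{2d-1/2}$. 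The remaining (and longest) part of the proof is the combinatorial--analytic reduction of the norms $\altnor{\hat A}^{L_q}$ of the diagonal matrix to the quantities $\max_{i_I}\robn{(a_\id)_{i_{I^c}}}^{L_q}_{\pp}$: this requires introducing the family $\mm([d])$ of set systems with multiplicity at most two, and a convexity argument (convexity of $v\mapsto\int_X(\sum_i f_i^2 v_i)^{q/2}d\mu$ and its bilinear analogue) to show one may restrict to the subfamily $\ccc$ where indices either appear in a $\max$ or in a genuine partition block. None of this is ``routine rearrangement,'' and your proposal does not supply a substitute for it.
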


\begin{ex}
If $d=2$ then Proposition \ref{wyk} reads for a symmetric matrix $A=(a_{ij})_{ij}$ as
\begin{align*}
\lv \sum_{ij} a_{ij} E^1_i E^2_j \rv_p
\sim^q& p^2\max_{i,j} \lv a_{ij}\rv_{L_q}
+p^{3/2} \max_i \sup_{x \in B^n_2} \lv \sum_j a_{ij} x_j \rv_{L_q}
\\
&+p\left( \max_{x,y \in B^n_2} \lv \sum_{ij} a_{ij} x_i y_j \rv_{L_q}
+\max_i \lv \sqrt{\sum_j a^2_{ij}} \rv_{L_q}\right)
\\
&+p^{1/2}\left(
\sup_{x \in B^n_2} \lv \sqrt{\sum_i \left(\sum_j a_{ij} x_j \right)^2 } \rv_{L_q}
+\sup_{x \in B^{n^2}_2} \lv \sum_j a_{ij} x_{ij} \rv_{L_q}\right)
\\
&+\lv \sqrt{\sum_{ij} a^2_{ij}} \rv_{L_q}.
\end{align*}
\end{ex}

The proof of Proposition \ref{wyk} is postponed till Section \ref{sec:proofs}.

\section{Reformulation of Theorem \ref{thm:proc} and entropy estimates} \label{sek3}

Let us rewrite Theorem \ref{thm:proc} in a different language. We may assume that $F=\R^m$ for some
finite $m$ and $a_{i_1,\ldots,i_d}=(a_{i_1,\ldots,i_d,i_{d+1}})_{i_{d+1}\leq m}$. For this reason from now on the multi-index $\ii$ will take values in $[n]^d\times [m]$ and all summations over $\ii$ should be understood as summations over this set. Accordingly, the matrix $A$ will be treated as a $(d+1)$-indexed matrix with real coefficients. Let $T=B_{F^*}$ be the unit ball in the dual space $F^*$ (where duality is realised on $\R^m$ through the standard inner product).
In the sequel we will therefore assume that $T$ is a fixed nonempty symmetric bounded subset of $\R^m$.

In this setup we have
\begin{align}
\E \sup_{ (x^2,\ldots,x^d) \in (B^n_2)^{d-1}} \lv \sum_\ii a_\ii g_{i_1} \prod_{k=2}^{d} x^k_{i_k} \rv
=\E \sup_{ (x^2,\ldots,x^d) \in (B^n_2)^{d-1}} \sup_{t \in T} \sum_\ii a_\ii g_{i_1} \prod_{k=2}^{d} x^k_{i_k} t_{i_{d+1}}\nonumber
\\
\nor{A}=\sup \left\{ \E  \sup_{t \in T} \sum_\ii \ai \prod_{r=1}^k x^r_{i_{I_r}}  \prod_{s =1 }^l g^{s}_{i_{J_s}}  t_{i_{d+1}} \ \Big{|}\ \forall_{j=1\ldots,k} \sum_{i_{I_j}} \left(x^{(j)}_{i_{I_j}}\right)^2=1 \right\},
\label{eq:new-norm-definition}
\end{align}
where $\pp=(I_1,\ldots,I_k),\pp '=(J_1,\ldots,J_l)$, $(\pp',\pp) \in \pp([d])$.

To make the notation more compact we define
$$
s_k(A)= \sum_{\stackrel{(\pp,\pp ') \in \pp([d]) }{|\pp|=k}} \nor{A}.
$$
As we will see in Section \ref{sec:proofs}, to prove Theorem \ref{thm:proc} it suffices to show the following.

\begin{twr}\label{maintheorem}
For any $p\geq 1$  we have
\begin{align}
\E \sup_{(x^2,\ldots,x^d,t) \in (B^n_2)^{d-1}\times T} \sum_\ii a_\ii g_{i_1} \prod_{k=2}^{d} x^k_{i_k} t_{i_{d+1}} \leq C(d) \sum_{k=0}^{d} p^{\frac{k+1-d}{2}}s_k(A) . \label{ala}
\end{align}
\end{twr}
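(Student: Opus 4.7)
The plan is to analyze the Gaussian process
\begin{equation*}
X_{(\mathbf{x},t)} := \sum_\ii a_\ii g_{i_1} \prod_{k=2}^d x^k_{i_k} t_{i_{d+1}},\qquad (\mathbf{x},t) \in \mathcal{T}:=(B^n_2)^{d-1}\times T,
\end{equation*}
by generic chaining. Since the process is linear in the single Gaussian block $(g_{i_1})_{i_1}$, its canonical pseudometric is
\begin{equation*}
\rho\big((\mathbf{x},t),(\mathbf{x}',t')\big)^{2} = \sum_{i_1} \Big(\sum_{i_2,\ldots,i_{d+1}} a_\ii \Big(\prod_{k=2}^d x^k_{i_k} t_{i_{d+1}} - \prod_{k=2}^d (x')^k_{i_k} t'_{i_{d+1}}\Big)\Big)^{2},
\end{equation*}
so Talagrand's majorizing measure theorem reduces the problem to constructing, for every $p\ge 1$, an admissible sequence of partitions of $\mathcal{T}$ whose chaining functional is controlled by $\sum_{k=0}^d p^{(k+1-d)/2} s_k(A)$.

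I would build this sequence as a product of admissible sequences, one for each factor $B^n_2$ and one for $T$, following the strategy of \cite{Latgaus}. At scale $s$ (with $|\mathcal{A}_s| \le 2^{2^s}$) I would group elements of a single ball $B^n_2$ according to the pattern of their ``large'' coordinates --- a subset whose allowed size depends on $s$ and on the parameter $p$ --- and refine this by discretising the projection onto the large block on an $\eps_s$-net, while controlling the orthogonal complement in $\ell_2$. Doing this simultaneously for $x^2,\ldots,x^d$ produces a partition of $(B^n_2)^{d-1}$ whose blocks carry labels $I_2,\ldots,I_d$, and an analogous partition of $T$ provides a label $I_{d+1}$. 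Telescoping the increment $\rho$ across the $d$ factors and then splitting each factor, via Cauchy--Schwarz, into a discretised ``large-coordinate'' part and a Gaussian-norm-controlled ``small-coordinate'' part, one obtains the block diameter as a sum indexed by pairs $(\pp,\pp')\in \pp([d])$: the blocks of $\pp$ collect the indices that are treated on a net, while the blocks of $\pp'$ collect the indices handled by the expected-norm bound --- exactly matching the definition of $\nor{A}$ in \eqref{eq:new-norm-definition}.

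The parameter $p$ sets the threshold between two regimes in the admissible sequence. At scales $s\le\log_2 p$ the partitions are coarse and favour the ``large-coordinate'' (discretised) blocks, contributing powers of $p$ of the form $p^{|\pp|/2}$; at scales $s>\log_2 p$ one switches to the finer, standard entropy-based nets. Multiplying the scale-$s$ block diameter by $2^{s/2}$ and summing over $s$ yields, after grouping the contributions by $k=|\pp|$, the desired bound $C(d)\sum_{k=0}^d p^{(k+1-d)/2} s_k(A)$.

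The main obstacle will be the combinatorial bookkeeping: verifying that the product of the per-factor partitions is genuinely admissible (cardinality at most $N_s=2^{2^s}$) and, more importantly, that the telescoped pseudo-distance distributes cleanly over the blocks of $\pp \cup \pp'$ so that each ``large-coordinate'' pattern is charged to the correct $\nor{A}$. The entropy estimates set up in Section \ref{sek3} are designed precisely for this --- they quantify the number of $\eps$-nets needed to approximate each coordinate-pattern piece of $B^n_2$ in a form that is additive over the factors --- so once they are in place, the chaining argument in Section \ref{sek4} should reduce to assembling the pieces.
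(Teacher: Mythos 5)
Your high-level plan (chaining with respect to the pseudometric $\rho_A$, with partitions whose resolution switches at scale $p$) points in the right direction, but it glosses over the two places where the Banach-valued case genuinely departs from the real-valued argument of \cite{Latgaus}, and these are exactly where the work lies. First, the factor $T$ is \emph{not} a Euclidean ball but an arbitrary bounded symmetric subset of $\R^m$ (the dual unit ball), so the ``large-coordinate pattern plus net on the large block'' decomposition you propose for each factor simply has no analogue for $T$. The paper controls $T$ only through Sudakov minoration applied to the norm $\beta_A$, producing nets $T_{\bt x^k,\eps}$ that \emph{depend on the point} $(x^2,\ldots,x^d)$; packaging these point-dependent nets into the measure $\mu^d_{\eps,T}$ and running a volumetric duality argument (Lemma \ref{miarkul}, Corollary \ref{gorsz}) is what makes the entropy bounds possible. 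A product of per-factor admissible sequences cannot produce this, and it is also the mechanism by which the expectation over the Gaussian blocks indexed by $\pp'$ enters $\nor{A}$ --- those terms are not obtainable from discretisation alone.

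Second, your sketch contains no induction on the order of the chaos, whereas the paper's proof is essentially inductive: the entropy functionals $W^U_I$, $V^U_I$ and the norm $\haa$ are themselves expected suprema of \emph{lower-order} Banach-valued chaoses, and they can only be converted into the quantities $s_k(A)$ by applying Theorem \ref{maintheorem} (equivalently Theorem \ref{oszglow}) to matrices of order $<d+1$ (Lemmas \ref{poprawka} and \ref{przesun}), with the base case $d+1=3$ imported from \cite{wek}. Relatedly, the entropy bound of Corollary \ref{gorsz} contains a $\delta^{-1}$ term whose Dudley integral diverges, so the chaining cannot be assembled by ``multiplying the block diameter by $2^{s/2}$ and summing over $s$'' as you describe; the paper instead runs a recursive translation-and-partition scheme (the functional $c_U(r,l)$, the shifted sets $(\y_i,0)+U_i$, and Lemma \ref{rozbij}) in which the problematic term is absorbed into the inductive hypothesis. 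Without the point-dependent Sudakov nets on $T$ and without the induction on $d$, the proposal does not yield the stated bound.
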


To estimate the supremum of a centered Gaussian process $(G_{v})_{v\in V}$ one needs to study the distance
on $V$ given by $d(v,v'):=(\E |G_v-G_{v'}|^2)^{1/2}$ (cf. \cite{Ta_book}). In the case of the Gaussian process from \eqref{ala} this distance is defined on $(B_2^n)^{d-1}\times T\subset \R^{n(d-1)}\times \R^m$
by the formula
\begin{align}
\label{eq:distance-definition}
&\rho_A((x^2,\ldots,x^d,t),(y^2,\ldots,y^d,t'))\\
&:=\left(\sum_{i_1} \left(\sum_{i_2,\ldots,i_{d+1}} a_\iid \left(\prod_{k=2}^d y^k_{i_k}t'_{i_{d+1}}-\prod_{k=2}^d x^k_{i_k}t_{i_{d+1}} \right) \right)^{2} \right)^{1/2}\nonumber\\
&=\al{\left(\bt_{k=2}^{d} x^k\right) \ten t - \left(\bt_{k=2}^{d} y^k\right) \ten t'},\nonumber
\end{align}
where $\left(\bt_{k=2}^{d} x^k \right) \ten t=(x^2_{i_2}\cdots x^d_{i_d}t_{i_{d+1}})_{i_2,\ldots,i_{d+1}}\in \R^{n^{d-1}m}$ and $\alpha_A$ is a norm defined on $(\R^n)^{\otimes (d-1)} \otimes \R^m \simeq \R^{n^{d-1}m}$ given by
\begin{align}
\label{eq:norm-definition}
\al{\textbf{x}}&:=\sqrt{\sum_{i_1}\left(\sum_{i_{[d+1]\setminus \{1\} }}a_\ii \textbf{x}_{i_{[d+1]\setminus \{1\}}} \right)^2}.
\end{align}

We will now provide estimates for the entropy numbers $N(U, \rho_A, \eps)$ for $\eps > 0$ and $U\subset (B^n_2)^{d-1}\times T$ (recall that $N(S,\rho,\eps)$ is the minimal number of closed balls with diameter $\eps$ in metric $\rho$ that cover the set $S$). To this end let us introduce some new notation. From now on $G_n = (g_1,\ldots,g_n)$ and $G^i_n=(g^i_1,\ldots,g^i_n)$ stand for independent standard Gaussian vectors in $\R^n$. For $s>0$, $U=\{ (x^2,\ldots,x^{d},t) \in U \} \subset \left(\R^{n}\right)^{d-1} \times T$  we set
\begin{align}\label{loc}
W^U_d(\alpha_A,s):=\sum_{k=1}^{d-1} s^k \sum_{I\subset \{2,\ldots,d \}: |I|=k} W^U_I(\alpha_A),
\end{align}
where
$$W^U_I(\alpha_A):=\sup_{(x^2,\ldots,x^{d+1},t) \in U} \E \al{\left(\los{2}{d}\right) \ten t}. $$

We define a norm $\beta_A$ on   $ (\R^n)^{\otimes (d-1)}\simeq \R^{n^{d-1}} $ by (recall that we assume symmetry of the set $T$)
\begin{align}\label{eq:definition-beta}
\bb{\textbf{y}}:=
\E \sup_{t \in T} \sum_\ii a_\ii g_{i_1} \textbf{y}_{i_{[d]\setminus \{1\}}} t_{i_{d+1}}
=\E \sup_{t \in T} \left| \sum_\ii a_\ii g_{i_1} \textbf{y}_{i_{[d]\setminus \{1\}}} t_{i_{d+1}} \right|.
\end{align}

Following \eqref{loc} we denote
\begin{align}\label{eq:definition-V}
V^U_d(\be_A,s):=\sum_{k=0}^{d-1} s^{k+1} \sum_{I\subset \{2,\ldots,d \}: |I|=k} V^U_I(\be_A),
\end{align}
where
$$
V^U_I(\be_A):=\sup_{(x^2,\ldots,x^d,t) \in U} \E \bb{\los{2}{d}}.
$$
Let us note that $V^U_I(\be_A)$ depends on the set $U$ only through its projection on the first $d-1$ coordinates.

We have
\begin{equation}\label{zaw}
V^U_d(\beta_A,s)\geq s \cdot V^U_{\emptyset}(\be_A)=s \cdot \sup_{(x^2,\ldots,x^d,t) \in U}  \bb{\bt_{k=2}^d x^k}.%=s \cdot \sup_{(x^2,\ldots,x^d,t) \in U} \E \sup_{t' \in U} \sum_\ii a_\ii g_{i_1} \prod_{k=2}^d x^k_{i_k} t'_{i_{d+1}}. \label{zaw}
\end{equation}
Observe that by the classical Sudakov minoration (see Theorem \ref{norsud}), for any $(x^k) \in \R^n$, $k=2,\ldots,d$ there exists $T_{\bt x^k,\eps} \subset T$ such that $|T_{\bt x^k,\eps}|\leq \exp(C \eps^{-2})$ and
$$
\forall_{t \in T} \exists_{t' \in T_{\bt x^k,\eps}} \al{\bt_{k=2}^d x^k \ten (t-t')} \leq \eps \bb{\bt_{k=2}^d x^k }.
$$
We define a measure $\mu^d_{\eps,T}$ on $\R^{(d-1)n} \times T$ by the formula
$$
\mu^d_{\eps,T}(C):=\int_{\R^{(d-1)n}} \sum_{t \in T_{\bt x^k,\eps}} \1_{C}\big((x^2,\ldots,x^d,t)\big) d\gamma_{(d-1)n,\eps}((x^k)_{k=2,\ldots,d}),
$$
where $\gamma_{n,t}$ is the distribution of $tG_n=t(g_1,\ldots,g_n)$. Clearly,
\begin{equation}
\label{ogrmiary}
\mu^d_{\eps,T}((\R^{d-1})^n \times T) \leq e^{C \eps^{-2}}.
\end{equation}

To bound $N(U, \rho_A, \eps)$ for $\eps > 0$ and $U\subset (B^n_2)^{d-1}\times T$ we need two lemmas.

\begin{lem}\cite[Lemma 2]{Latgaus}
\label{entgausog}
For any $\textbf{x}=(x^1,\ldots,x^d) \in (B^n_2)^d$, norm $\alpha '$ on $\R^{n^d}$ and $\eps>0$ we have
$$
\gamma_{dn,\eps}\left( B_{\alpha '}(\textbf{x},r(4\eps,\alpha')) \right) \geq 2^{-d} \exp(-d\eps^{-2}/2),
$$
where
$$
B_{\alpha '}(\textbf{x},r(\eps,\alpha'))=\left\{\textbf{y}=(y^1,\ldots,y^d) \in (\R^n)^{d} \ \mid \
\alpha ' \left(\bigotimes_{k=1}^d x^k-\bigotimes_{k=1}^d y^k\right) \leq r(\eps,\alpha ') \right\},
$$
and
$$
r(\eps,\alpha ')=\sum_{k=1}^d \eps^k \sum_{I \subset [d]:\ |I|=k}
\E \alpha '\left(\bigotimes_{k=1}^d \left(x^k (1-\1_{k \in I})+G^k \1_{k \in I} \right) \right).
$$
\end{lem}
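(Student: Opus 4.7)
The plan is to lower bound $\gamma_{dn,\eps}(B_{\alpha'}(\textbf{x},r(4\eps,\alpha')))$ by a Cameron--Martin-type change of variables, a sign-flip symmetrization, and a single Markov step. Writing $Y=\eps G\sim\gamma_{dn,\eps}$ with $G$ standard Gaussian on $(\R^n)^d$, I would substitute $Y^k=x^k+\eps Z^k$ in the density integral. Using $\|x^k+\eps z^k\|^2=\|x^k\|^2+2\eps\langle x^k,z^k\rangle+\eps^2\|z^k\|^2$ and $\sum_k\|x^k\|^2\le d$, this yields
$$
\gamma_{dn,\eps}(B_{\alpha'}(\textbf{x},r))\ge e^{-d/(2\eps^2)}\,\E\bigl[\1_{\tilde A}(Z)\,e^{-\sum_k\langle x^k,Z^k\rangle/\eps}\bigr],
$$
where $r=r(4\eps,\alpha')$ and $\tilde A=\{z:\alpha'(\bigotimes_k x^k-\bigotimes_k(x^k+\eps z^k))\le r\}$.

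The next step is to pass to a smaller event $\tilde A'\subset\tilde A$ invariant under each sign flip $Z^k\mapsto-Z^k$. Expanding
$$
\bigotimes_k(x^k+\eps z^k)-\bigotimes_k x^k=\sum_{\emptyset\ne I\subset[d]}\eps^{|I|}\bigotimes_k\bigl(x^k\1_{k\notin I}+z^k\1_{k\in I}\bigr)
$$
and applying the triangle inequality for $\alpha'$, I would take $\tilde A'$ to be the event that the sum of the $\alpha'$-norms of the summands on the right is at most $r$. Because $\alpha'$ is a norm, flipping the sign of any single $z^k$ multiplies the summand indexed by $I$ globally by $\pm 1$ and leaves its $\alpha'$-norm unchanged, so $\tilde A'$ is invariant under every coordinate sign flip. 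Averaging the exponential weight over uniform $\sigma\in\{\pm1\}^d$, which is legitimate because $Z\stackrel{d}{=}\sigma Z$, replaces $e^{-\sum_k\langle x^k,Z^k\rangle/\eps}$ by $\prod_k\cosh(\langle x^k,Z^k\rangle/\eps)\ge 1$, giving
$$
\E\bigl[\1_{\tilde A'}(Z)\,e^{-\sum_k\langle x^k,Z^k\rangle/\eps}\bigr]\ge \Pro(Z\in\tilde A').
$$

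For the final step I would invoke Markov's inequality. The mean of the quantity defining $\tilde A'$ is $\mu=\sum_{\emptyset\ne I\subset[d]}\eps^{|I|}\E\alpha'(\bigotimes_k(x^k\1_{k\notin I}+G^k\1_{k\in I}))$, while the threshold is $r(4\eps,\alpha')=\sum_{\emptyset\ne I}4^{|I|}\eps^{|I|}\E\alpha'(\cdots)\ge 4\mu$, so $\Pro(Z\in\tilde A')\ge 3/4\ge 2^{-d}$, and multiplying the three estimates completes the argument. The one non-routine point is organising the triangle-inequality expansion so that every summand is \emph{manifestly} invariant under each coordinate sign flip separately, not merely under the global map $Z\mapsto -Z$: only this finer invariance allows $\E\prod_k\cosh(\langle x^k,Z^k\rangle/\eps)\ge 1$ to absorb the unbounded exponential weight produced by the Cameron--Martin shift. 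The constant $4$ in $r(4\eps,\alpha')$ is then precisely what is needed for the Markov step to succeed without any further concentration input.
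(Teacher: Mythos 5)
Your argument is correct and complete: the Cameron--Martin change of variables with $\sum_k\|x^k\|^2\le d$, the multilinear expansion plus triangle inequality to pass to a sub-event that is invariant under each coordinate sign flip separately, the $\cosh$-symmetrization to absorb the unbounded weight $e^{-\langle x,Z\rangle/\eps}$, and the Markov step using $r(4\eps,\alpha')\ge 4\,r(\eps,\alpha')$ all check out, and $3/4\ge 2^{-d}$ closes the bound. The paper itself gives no proof of this lemma (it is quoted from \cite[Lemma 2]{Latgaus}), and your proof is essentially the standard translation--symmetrization--Chebyshev argument of that reference, so there is nothing to flag.
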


\begin{lem}\label{miarkul}
For any $(\xx,t)=(x^2,\ldots,x^d,t) \in \left( B^n_2 \right)^{d-1} \times T$ and $\eps>0$ we have
$$
\mu^d_{\eps,T}\left(B\left( (\xx,t),\rho_A, W^{\{(\xx,t)\}}_d(\alpha_A,8\eps)+V^{\{(\xx,t)\}}_d(\be_A,8\eps) \right) \right)\geq c^d \exp\left(- C(d)\eps^{ -2}\right).
$$
%where
%\begin{align*}
%f(\eps)=\sum_{I\subsetneq \{2,\ldots,d\}} (4\eps)^{d-1-|I|} \E \al{\bt_{k=2}^d \left(x^k \1_I(k)+G^k(1-\1_I(k)) \right)\ten t}\\
%+\sum_{I \subset \{2,\ldots,d\}} (4\eps)^{d-|I|} \E \bb{\bt_{k=2}^d \left(x^k \1_I(k)+G^k(1-\1_I(k)) \right)\ten t}.
%\end{align*}
\end{lem}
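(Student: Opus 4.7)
The plan is to reduce the statement to Lemma~\ref{entgausog} applied to a carefully chosen auxiliary norm on $\R^{n(d-1)}$, combined with the Sudakov net $T_{\bt y^k,\eps}$ built into the definition of $\mu^d_{\eps,T}$. Concretely, I will exhibit a set $G\subset\R^{(d-1)n}$ with $\gamma_{(d-1)n,\eps}(G)\ge c^d\exp(-C(d)\eps^{-2})$ such that for every $\y\in G$ some $t'\in T_{\bt y^k,\eps}$ lies inside the target $\rho_A$-ball around $(\xx,t)$; the measure $\mu^d_{\eps,T}$ of this ball will then be at least $\gamma_{(d-1)n,\eps}(G)$, as desired.

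The key step is to apply Lemma~\ref{entgausog} (with $d$ replaced by $d-1$) to the norm
\[
\alpha'(\y):=\alpha_A(\y\otimes t)+\eps\,\beta_A(\y)\qquad \text{on }\R^{n^{d-1}},
\]
where the $\eps$-weighting of $\beta_A$ is crucial, see the last paragraph. The lemma furnishes $G:=B_{\alpha'}(\xx,r(4\eps,\alpha'))$ with $\gamma_{(d-1)n,\eps}(G)\ge 2^{-(d-1)}\exp(-(d-1)\eps^{-2}/2)$. Linearity of expectation splits the radius as $r(4\eps,\alpha')=W^{\{(\xx,t)\}}_d(\alpha_A,4\eps)+\eps\,r(4\eps,\beta_A)$, and a direct comparison of the polynomial forms of $r(\cdot,\beta_A)$ and $V^{\{(\xx,t)\}}_d(\beta_A,\cdot)$ yields the identity $\eps\,r(4\eps,\beta_A)=V^{\{(\xx,t)\}}_d(\beta_A,4\eps)/4-\eps\beta_A(\bt_{k=2}^d x^k)$, so in particular $r(4\eps,\alpha')\le W^{\{(\xx,t)\}}_d(\alpha_A,4\eps)+V^{\{(\xx,t)\}}_d(\beta_A,4\eps)/4$.

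For $\y\in G$, the definition of $\alpha'$ gives simultaneously $\alpha_A\big((\bt_{k=2}^d x^k-\bt_{k=2}^d y^k)\otimes t\big)\le r(4\eps,\alpha')$ and $\beta_A\big(\bt_{k=2}^d x^k-\bt_{k=2}^d y^k\big)\le r(4\eps,\alpha')/\eps$. Picking $t'\in T_{\bt y^k,\eps}$ with $\alpha_A\big(\bt_{k=2}^d y^k\otimes(t-t')\big)\le\eps\beta_A(\bt_{k=2}^d y^k)$ (possible by construction of the Sudakov net) and using the triangle inequality for $\alpha_A$ and $\beta_A$, one obtains
\[
\rho_A((\xx,t),(\y,t'))\le \alpha_A\big((\bt_{k=2}^d x^k-\bt_{k=2}^d y^k)\otimes t\big)+\eps\beta_A(\bt_{k=2}^d y^k)\le 2\,r(4\eps,\alpha')+\eps\beta_A(\bt_{k=2}^d x^k).
\]
Substituting the formula for $r(4\eps,\alpha')$ and employing the termwise bounds $W^U_d(\alpha_A,4\eps)\le W^U_d(\alpha_A,8\eps)/2$ and $V^U_d(\beta_A,4\eps)\le V^U_d(\beta_A,8\eps)/2$ (both immediate from the polynomial dependence on $s$), the right-hand side is at most $W^{\{(\xx,t)\}}_d(\alpha_A,8\eps)+V^{\{(\xx,t)\}}_d(\beta_A,8\eps)$, which concludes the argument.

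The main delicacy is the $\eps$-weighting inside $\alpha'$. The naive choice $\alpha_A(\cdot\otimes t)+\beta_A$ would force a term of order $V^{\{(\xx,t)\}}_d(\beta_A,4\eps)/(4\eps)$ in the upper bound on $\rho_A$; for small $\eps$ this exceeds $V^{\{(\xx,t)\}}_d(\beta_A,8\eps)$ by a factor of order $\eps^{-1}$, reflecting the extra power of $s$ in the definition of $V^U_d$ as compared with $r(\cdot,\beta_A)$. The prefactor $\eps$ on $\beta_A$ in $\alpha'$ is precisely what cancels this mismatch and produces the required scaling.
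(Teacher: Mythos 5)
Your proof is correct and follows essentially the same route as the paper: both arguments apply Lemma \ref{entgausog} to the auxiliary norm $\alpha_A(\cdot\otimes t)+\eps\beta_A(\cdot)$, use the Sudakov net $T_{\bt y^k,\eps}$ to select $t'$, and absorb the leftover term $\eps\beta_A(\bt_{k=2}^d x^k)$ via \eqref{zaw} before doubling the scale from $4\eps$ to $8\eps$. Your explicit identity for $r(4\eps,\alpha')$ is a slightly more detailed bookkeeping of what the paper does implicitly when it defines the set $U$ by the inequality $\alpha_A((\bt x^k-\bt y^k)\otimes t)+\eps\beta_A(\bt x^k-\bt y^k)\leq W^{\{(\xx,t)\}}_d(\alpha_A,4\eps)+V^{\{(\xx,t)\}}_d(\be_A,4\eps)$, but the substance is identical.
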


\begin{proof}
Fix  $(\xx,t)\in \left( B^n_2 \right)^{d-1} \times T$, $\eps>0$ and consider
\begin{align*}
U=\Bigg\{(y^2,\ldots,y^d) \in \R^{(d-1)n} \colon\  &\al{\left(\bt_{k=2}^d x^k-\bt_{k=2}^d y^k \right)\ten t } + \varepsilon \bb{\bt_{k=2}^d x^k-\bg_{k=2}^d y^k}
\\
 & \leq W^{\{(\xx,t)\}}_d(\alpha_A,4\eps)+V^{\{(\xx,t)\}}_d(\be_A,4\eps)  \Bigg\}.
\end{align*}
For any $(y^2,\ldots,y^d) \in U$ there exists $t' \in T_{\bt y^k,\eps}$ such that $ \al{\bt_{k=2}^d y^k \ten (t-t')} \leq \eps \bb{\bt_{k=2}^d y^k }$. By the triangle inequality,
\begin{align*}
&\al{\bg_{k=2}^d x^k \ten t - \bg_{k=2}^d y^k \ten t'}
\\
&\phantom{aaaaa}
\leq \al{\left( \bg_{k=2}^d x^k -\bg_{k=2}^d y^k \right) \ten t}+\al{\bg_{k=2}^d y^k \ten (t-t')}
\\
%&\leq  \al{\left( \bg_{k=2}^d x^k -\bg_{k=2}^d y^k \right) \ten t} +\eps \bb{\bt_{k=2}^d y^k }\\
&\phantom{aaaaa}
\leq\al{\left( \bg_{k=2}^d x^k -\bg_{k=2}^d y^k \right) \ten t} +\eps \bb{\bg_{k=2}^d x^k-\bt_{k=2}^d y^k }+\eps \bb{\bg_{k=2}^d x^k }
\\
&\phantom{aaaaa}
\leq W^{\{(\xx,t)\}}_d(\alpha_A,4\eps)+2V^{\{(\xx,t)\}}_d(\be_A,4\eps)
\leq  W^{\{(\xx,t)\}}_d(\alpha_A,8\eps)+V^{\{(\xx,t)\}}_d(\be_A,8\eps),
\end{align*}
where in the third inequality we used \eqref{zaw}. Thus,
\begin{multline*}
\mu^d_{\eps,T}\left(B\left( (\xx,t),\rho_A, W^{\{(\xx,t)\}}_d(\alpha_A,8\eps)
+V^{\{(\xx,t)\}}_d(\be_A,8\eps) \right) \right)
\\
\geq \gamma_{(d-1)n,\eps}(U)\geq c^d \exp(-C(d) \eps^{ -2}),
\end{multline*}
where the last inequality follows by Lemma \ref{entgausog}  applied to the norm $\alpha_A(\cdot \otimes t) + \varepsilon\beta_A( \cdot)$.

\end{proof}

\begin{cor}\label{gorsz}
For any $\eps,\delta>0$ and $U\subset (B^n_2)^{d-1}\times T$ we have
\begin{align}
N\left(U,\rho_A,W^U_d(\alpha_A,\eps)+V^U_d(\be_A,\eps)\right) \leq \exp(C(d) \eps^{-2}) \label{k1}
\end{align}
and
\begin{align}
\label{k2}
&\sqrt{\log N(U,\rho_A,\delta)}
\\
\notag
&\leq C(d) \left(\sum_{k=1}^{d-1} \left(\sum_{\stackrel{I\subset \{2,\ldots,d\}}{|I|=k}}  W^U_I(\alpha_A)\right)^{\frac{1}{k}}\delta^{-\frac{1}{k}}+\sum_{k=0}^{d-1} \left(\sum_{\stackrel{I \subset \{2,\ldots,d\}}{|I|=k}}V^U_I(\beta_A) \right)^{\frac{1}{k+1}} \delta^{-\frac{1}{k+1}}\right).
\end{align}
\end{cor}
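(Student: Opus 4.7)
The plan is to prove \eqref{k1} by a measure/packing argument combining Lemma \ref{miarkul} with the total mass bound \eqref{ogrmiary}, and then to derive \eqref{k2} by inverting the resulting radius–entropy relation.

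For \eqref{k1}, given $\eps > 0$ I would first set $R_\eps := W^U_d(\alpha_A, 8\eps) + V^U_d(\beta_A, 8\eps)$ and take a maximal $2R_\eps$-separated family $\{v_i\}_{i=1}^N \subset U$ in the pseudometric $\rho_A$, so that the $\rho_A$-balls $B(v_i, R_\eps)$ are pairwise disjoint. The key point is that $W^{\{v_i\}}_I(\alpha_A) \leq W^U_I(\alpha_A)$ and $V^{\{v_i\}}_I(\beta_A) \leq V^U_I(\beta_A)$, so Lemma \ref{miarkul} supplies the uniform lower bound
\[
\mu^d_{\eps, T}\bigl(B(v_i, R_\eps)\bigr) \geq c^d \exp\bigl(-C(d)\eps^{-2}\bigr),
\]
while \eqref{ogrmiary} caps the total mass at $\exp(C\eps^{-2})$. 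Summing disjoint contributions yields $N \leq \exp(C'(d)\eps^{-2})$, and covering numbers are dominated by packing numbers, so the same bound governs $N(U, \rho_A, 2R_\eps)$. To replace $2R_\eps$ by $W^U_d(\alpha_A, \eps) + V^U_d(\beta_A, \eps)$ I would invoke the scaling inequality $W^U_d(\alpha_A, c\eps) \geq c\, W^U_d(\alpha_A, \eps)$ for $c \geq 2$ (valid term by term, and analogously for $V^U_d$), which gives $2R_\eps \leq W^U_d(\alpha_A, 16\eps) + V^U_d(\beta_A, 16\eps)$; a rescaling $\eps \mapsto \eps/16$ then absorbs the prefactor into $C(d)$.

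To deduce \eqref{k2}, the plan is to choose, for given $\delta > 0$, the largest admissible $\eps > 0$ satisfying $W^U_d(\alpha_A, \eps) + V^U_d(\beta_A, \eps) \leq \delta$, so that \eqref{k1} yields $\sqrt{\log N(U, \rho_A, \delta)} \leq C(d)/\eps$. A sufficient condition is that each of the $2d-1$ summands of
\[
W^U_d(\alpha_A, \eps) + V^U_d(\beta_A, \eps) = \sum_{k=1}^{d-1} \eps^k \sum_{|I|=k} W^U_I(\alpha_A) + \sum_{k=0}^{d-1} \eps^{k+1} \sum_{|I|=k} V^U_I(\beta_A)
\]
be bounded by $\delta/(2d)$, which amounts to
\[
\eps \leq \min_{k}\Bigl\{\Bigl(\tfrac{\delta}{2d\sum_{|I|=k} W^U_I(\alpha_A)}\Bigr)^{1/k},\ \Bigl(\tfrac{\delta}{2d\sum_{|I|=k} V^U_I(\beta_A)}\Bigr)^{1/(k+1)}\Bigr\}.
\]
Taking $\eps$ equal to this minimum makes $1/\eps$ the corresponding maximum of reciprocals, which is bounded by their sum; plugging into $C(d)/\eps$ reproduces the right-hand side of \eqref{k2}.

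I do not anticipate any serious obstacle: both steps are essentially routine given Lemma \ref{miarkul} and the total-mass bound \eqref{ogrmiary}. The only mildly delicate point is the bookkeeping of numerical constants — the inner scale $8\eps$ in Lemma \ref{miarkul} and the factor $2$ from the packing–covering comparison — which the homogeneity of $W^U_d(\alpha_A, \cdot)$ and $V^U_d(\beta_A, \cdot)$ in the scale parameter absorbs cleanly into $C(d)$.
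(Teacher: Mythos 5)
Your proposal follows essentially the same route as the paper: a packing argument in which Lemma \ref{miarkul} gives a uniform lower bound on the $\mu^d_{\eps,T}$-measure of disjoint $\rho_A$-balls, \eqref{ogrmiary} caps the total mass, and the homogeneity of $s\mapsto W^U_d(\alpha_A,s)$, $s\mapsto V^U_d(\beta_A,s)$ (all powers of $s$ are $\ge 1$) absorbs the resulting numerical factors; the derivation of \eqref{k2} by bounding each of the $2d-1$ summands by $\delta/(2d)$ and inverting is exactly the intended ``easy implication.''

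There is one small but genuine oversight in the step ``summing disjoint contributions yields $N\le \exp(C'(d)\eps^{-2})$.'' What the mass comparison actually gives is $N\le c^{-d}\exp(C'(d)\eps^{-2})$, and the multiplicative prefactor $c^{-d}>1$ can be absorbed into the exponential only when $\eps^{-2}$ is bounded away from $0$, i.e.\ when $\eps$ is bounded above. Since \eqref{k1} is asserted for \emph{all} $\eps>0$, the regime of large $\eps$ needs a separate argument; the paper handles $\eps\ge 8$ by observing via \eqref{zaw} that then $W^U_d(\alpha_A,\eps)+V^U_d(\beta_A,\eps)$ already exceeds $\mathrm{diam}(U,\rho_A)$, so that $N=1$. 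Adding this two-line case distinction closes the gap; everything else in your argument, including the bookkeeping of the scales $8\eps$, $16\eps$ and the packing-versus-covering comparison, is correct.
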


\begin{proof}
It suffices to show \eqref{k1}, since it easily implies \eqref{k2}. Consider first $\eps\leq 8$. Obviously,
$W^U_d(\alpha_A,\eps)+V^U_d(\be_A,\eps)\geq \sup_{(\xx,t) \in U } (W^{\{(\xx,t)\}}_d(\alpha_A,\eps)
+V^{\{(\xx,t)\}}_d(\be_A,\eps))$. Therefore, by Lemma \ref{miarkul} (applied with $\eps/16$)
we have for any $(\xx,t)\in U$,
\begin{equation}
\label{gorsz1}
\mu^d_{\eps,T}\left(B\left( (\xx,t),\rho_A, W^U_d(\alpha_A,\eps/2)+V^U_d(\be_A,\eps/2) \right) \right)
\geq C(d)^{-1} \exp\left(-C(d)\eps^{-2}\right).
\end{equation}
Suppose that there exist $(\xx_1,t_1),\ldots,(\xx_N,t_N) \in U$ such that
$\rho_A ( (\xx_i,t_i),(\xx_j,t_j))> W^U_d(\alpha_A,\eps)+V^U_d(\be_A,\eps) \geq 2W^U_d(\alpha_A,\eps/2)+2V^U_d(\be_A,\eps/2)$ for $i \neq j$.
Then the sets $B\left( (\xx_i,t_i),\rho_A, W^U_d(\alpha_A,\eps/2)+V^U_d(\be_A,\eps/2) \right) $ are disjoint, so by
\eqref{ogrmiary} and \eqref{gorsz1}, we obtain $N \leq  C(d)\exp(C(d) \eps^{-2}) \leq \exp(C(d) \eps^{-2}) $.

If $\eps\geq 8$ then \eqref{zaw} gives
\begin{align*}
W^U_d(\alpha_A,\eps)+V^U_d(\be_A,\eps)
&\geq 8 \sup_{(x^2,\ldots,x^d,t) \in U } \E \left| \sum_\ii a_\ii g_{i_1} \prod_{k=2}^d x^k_{i_k} t_{i_{d+1}} \right|
\\
&=\sqrt{\frac{128}{\pi}} \sup_{(x^2,\ldots,x^d,t) \in U } \left(\sum_{i_1} \left(\sum_{i_2,\ldots,i_{d+1}}  a_\ii \prod_{k=2}^d x^k_{i_k} t_{i_{d+1}}  \right)^2 \right)^{1/2}
\\
&\geq \mathrm{diam}\left(U, \rho_A \right).
\end{align*}
So, $N(U,\rho_A,W^U_d(\alpha_A,\eps)+V^U_d(\be_A,\eps))=1 \leq \exp( \eps^{-2})$.
\end{proof}

\begin{rem}
\newcommand{\U}{{(B^n_2)^{d-1}\times T}}
The classical Dudley's bound on suprema of Gaussian processes (see e.g., \cite[
Corollary 5.1.6]{7}) gives
\begin{align*}
\E \sup_{(x^2,\ldots,x^d,t) \in (B^n_2)^{d-1}\times T} \sum_\ii a_\ii g_{i_1} \prod_{k=2}^{d} x^k_{i_k} t_{i_{d+1}}\leq C \int_{0}^{\Delta} \sqrt{\log N(\U,\rho_A,\delta)} d \delta,
\end{align*}
where $\Delta$ is equal to the diameter of the set $(B^n_2)^{d-1}\times T$ in the metric $\rho_A$.
Unfortunately the entropy bound derived in Corollary \ref{gorsz} involves a nonintegrable term $\delta^{-1}$.
The remaining part of the proof of Theorem \ref{maintheorem} is devoted to
improve on Dudley's bound.
\end{rem}

For $\textbf{x},\textbf{y}\in (\R^{n})^{d-1}$ we define a norm $\hat{\alpha}_A$ on $ (\R^{n})^{d-1}=\R^{(d-1)n}$
by the formula
\begin{multline*}
\hat{\alpha}_A((x^2,\ldots,x^d))
:=\sum_{j=2}^d \sum_{\substack{(\pp,\pp ') \in \pp([d]\setminus \{j\}) \\ |\pp|= d-2} }
\lv \sum_{i_j} a_\ii x^j_{i_j} \rv_{\pp ' \sep \pp}
\\
=\sum_{j=2}^d \sum_{ \substack{\pp \in \pp([d]\setminus \{j\}) \\ |\pp|=d-2}}
\lv \sum_{i_j}a_\ii x^j_{i_j} \rv_{\emptyset \sep \pp }
+\sum_{j=2}^d \sum_{j\neq k=1}^d \sum_{ \substack{\pp\in \pp([d]\setminus\{j,k\}) \\ |\pp|=d-2}  }
\lv \sum_{i_j} a_\ii x^j_{i_j} \rv_{\{k\}\sep \pp}.
\end{multline*}

\begin{prep}
\label{prop}
For any $d+1\geq 4$, $\eps>0$ and $U\subset (B^n_2)^{d-1}\times T$,
\begin{align}
N\left(U ,\rho_A, \sum_{k=0}^{d-2} \eps^{d-k} s_k(A) +\eps \sup_{(x^2,\ldots,x^d,t) \in U} \hat{\alpha}_A((x^2,\ldots,x^d)) \right) \leq \exp(C(d) \eps^{-2}). \nonumber
\end{align}

\end{prep}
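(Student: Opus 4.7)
The plan is to derive Proposition \ref{prop} from Corollary \ref{gorsz} by bounding the radius $W^U_d(\alpha_A,\eps) + V^U_d(\beta_A,\eps)$ appearing in the latter by $C(d)R(\eps)$, where
\begin{align*}
R(\eps) := \sum_{k=0}^{d-2} \eps^{d-k} s_k(A) + \eps \sup_{(\xx,t) \in U} \hat{\alpha}_A(\xx)
\end{align*}
is the radius in Proposition \ref{prop}. Since each summand in $R(\eps)$ is homogeneous of positive degree in $\eps$, $R$ satisfies $R(c\eps) \geq c R(\eps)$ for $c \geq 1$, so taking $c = C(d)$ and applying Corollary \ref{gorsz} at scale $\eps/c$ gives the claimed entropy bound after absorbing $c^2$ into the constant in the exponent.

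To estimate the individual terms $V^U_I(\beta_A)$ for $I \subset \{2,\ldots,d\}$ with $|I| = k \geq 1$, I would upper-bound the supremum over $U$ by the supremum over $\|x^j\|_2 \leq 1$ and then, by the very definition of $\nor{A}$, obtain $V^U_I(\beta_A) \leq \|A\|_{\mathcal{P}'|\mathcal{P}}$ where $\mathcal{P}' = \{\{1\}\}\cup\{\{j\}:j\in I\}$ and $\mathcal{P}=\{\{j\}:j\in\{2,\ldots,d\}\setminus I\}$, so that $\eps^{k+1}V^U_I \leq \eps^{d-j}s_j(A)$ with $j = d-1-k \in [0,d-2]$. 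For $W^U_I(\alpha_A)$ with $|I|=k\geq 2$ I would first bound $\E\alpha_A(\cdot)\leq\sqrt{\E\alpha_A(\cdot)^2}$ by Jensen and then compute the second moment of the tetrahedral Gaussian chaos in $(G^j)_{j\in I}$ using the identity $\E G^j_{i_j}G^j_{i_j'}=\delta_{i_j i_j'}$ and independence of the $G^j$'s; this collapses to a sum of squares whose square root, by duality, is precisely $\|A\|_{\emptyset|\mathcal{P}}$ with $\mathcal{P}=\{I\cup\{1\}\}\cup\{\{j\}:j\in\{2,\ldots,d\}\setminus I\}$, giving $\eps^kW^U_I\leq \eps^{d-j}s_j(A)$ with $j=d-k\in[1,d-2]$.

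The two remaining ``boundary'' contributions are $\eps V^U_\emptyset(\beta_A)$ (the $k=0$ term in $V^U_d$) and $\eps\sum_{j=2}^d W^U_{\{j\}}(\alpha_A)$ (the $k=1$ term in $W^U_d$). The generic bounds from the previous paragraph would produce $\eps s_{d-1}(A)$ for each, a term absent from $R(\eps)$; this is exactly the quantity that the auxiliary norm $\hat{\alpha}_A$ is engineered to absorb, exploiting the fact that $(\xx,t)\in U\subset (B_2^n)^{d-1}\times T$ allows one to plug $y^r=x^r$ rather than taking a generic $\sup_{\|y^r\|_2\leq 1}$. Specifically, for $V^U_\emptyset(\beta_A)$ I would use the second sub-sum of $\hat{\alpha}_A$ with $k=1$: picking any $j'\in\{2,\ldots,d\}$ and specialising the auxiliary vectors inside $\|\sum_{i_{j'}}a_\ii x^{j'}_{i_{j'}}\|_{\{1\}|\mathcal{P}}$ to $y^r=x^r$ yields a lower bound equal to the integrand $\E\beta_A(\bigotimes_{r=2}^d x^r)$ that defines $V^U_\emptyset$. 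For $W^U_{\{j\}}(\alpha_A)$ I would analogously use the first sub-sum of $\hat{\alpha}_A$ with a partition whose pair block is $\{1,j\}$ and remaining blocks are singletons, again specialising to $y^r=x^r$; the resulting lower bound dominates the Hilbert--Schmidt expression obtained from the Jensen step. Summing over $j$ yields $\eps V^U_\emptyset+\eps\sum_j W^U_{\{j\}}\leq C(d)\eps\sup_U\hat{\alpha}_A$.

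The main obstacle is the combinatorial matching in the previous paragraph: one must verify that the two families of terms appearing in $\hat{\alpha}_A$ are rich enough, after the specialisation $y^r=x^r$, to dominate \emph{both} boundary quantities simultaneously; any mismatch would either force enlarging $\hat{\alpha}_A$ or leave a spurious $\eps s_{d-1}(A)$ term in the final bound. Once this matching is in place, the remaining steps, summation of the individual bounds and rescaling $\eps\mapsto \eps/C(d)$, are routine.
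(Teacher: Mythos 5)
Your proposal is correct and follows essentially the same route as the paper's proof: Jensen/duality bounds reduce the generic $W^U_I$ and $V^U_I$ terms to the appropriate $s_{d-k}(A)$'s, while the two boundary terms $\eps V^U_\emptyset(\beta_A)$ and $\eps\sum_j W^U_{\{j\}}(\alpha_A)$ are absorbed into $\eps\,\hat{\alpha}_A$ by retaining one coordinate $x^l$ of the actual point of $U$ (which is exactly where $d+1\ge 4$ is needed, cf.\ Remark \ref{niezachodzi}), before invoking Corollary \ref{gorsz}. The combinatorial matching you flag does go through exactly as you describe, and your explicit rescaling $R(c\eps)\ge cR(\eps)$ is just a spelled-out version of the constant-absorption the paper leaves implicit.
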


\begin{proof} We will estimate the quantities $W^U_d(\alpha_A,\eps)$ and $V^U_d(\be_A,\eps)$ appearing in Corollary \ref{gorsz}.

Since $U\subset (B^n_2)^{d-1}\times T$,  Jensen's inequality yields for $I \subset \{2,\ldots,d\}$,
\begin{align}\label{prop1}
W^U_I(\alpha_A)&=\sup_{(x^2,\ldots,x^{d},t) \in U} \E \al{\left(\los{2}{d}\right) \ten t} \\
&\leq \sup_{(x^2,\ldots,x^{d},t) \in U}  \sqrt{\E \sum_{i_1}\left( \sum_{i_2,\ldots,i_{d+1}} a_\ii \prod_{k\in I} g^k_{i_k} \prod_{k \in [d]\setminus (I\cup \{1\})} x^k_{i_k}  t_{i_{d+1}} \right)^2 } \nonumber \\
&=\sup_{(x^2,\ldots,x^{d},t) \in U}  \sqrt{\sum_{i_{I\cup\{1\}}}\left( \sum_{i_{[d+1]\setminus (I \cup \{1\})  }} a_\ii \prod_{k \in [d]\setminus (I\cup \{1\})} x^k_{i_k} t_{i_{d+1}}  \right)^2 }\nonumber\\
& \leq \lv A \rv_{\emptyset \sep I\cup\{1\},\{k\}: k \in [d]\setminus (I \cup \{1\}) }\leq s_{d-|I|}(A). \nonumber
\end{align}
By estimating a little more accurately in the second inequality in \eqref{prop1}
we obtain for $2\leq j \leq d$,

\begin{align}
\label{pop1}
W^U_{\{j\}}(\alpha_A)
&\leq \sup_{(x^2,\ldots,x^{d},t) \in U} \sum_{\substack{2\leq l \leq d\\ l\neq j}}\sup_{(y^2,\ldots,y^d)\in (B^n_2)^{d-1}} \sqrt{\sum_{i_1,i_j}\left( \sum_{i_{[d+1]\setminus \{1,j\}  }} a_\ii x^l_{i_l} \prod_{\substack{2\leq k \leq d\\ k\neq j,l}} y^k_{i_k} t_{i_{d+1}}  \right)^2 }
\\
&\leq \sup_{(x^2,\ldots,x^{d},t) \in U}  \sum_{l=2}^d\sum_{\substack{\pp \in \pp([d]\setminus \{l\}) \\ |\pp|=d-2} } \lv \sum_{i_l} a_\ii x^l_{i_l} \rv_{\emptyset\sep\pp}. \nonumber
\end{align}
%\begin{align}\label{pop1}
%W^U_{\{j\}}(\aaa)\leq \sup_{(x^2,\ldots,x^{d},t) \in U} \sum_{\substack{2\leq l \leq d\\ l\neq j}}\sup_{(y^2,\ldots,y^d)\in (B^n_2)^{d-1}} \sqrt{\sum_{i_1,i_j}\left( \sum_{i_{[d+1]\setminus \{1,j\}  }} a_\ii x^l_{i_l} \prod_{\substack{2\leq k \leq d\\ k\neq j,l}} y^k_{i_k} t_{i_{d+1}}  \right)^2 }\\
%\leq \sup_{(x^2,\ldots,x^{d},t) \in U}  \sum_{l=2}^d\sum_{\substack{\pp \in \pp([d]\setminus \{l\}) \\ |\pp|=d-2} } \lv \sum_{i_l} a_\ii x^l_{i_l} \rv_{\emptyset,\pp}. \nonumber
%\end{align}
Observe that \eqref{pop1} is not true for $d+1=3$ (cf. Remark \ref{niezachodzi}).

Let us now pass to the quantity $V^U_d(\be_A,\eps)$. The definition of $V^U_I$ and the inclusion $U\subset (B^n_2)^{d-1}\times T$ yield
\begin{align}
 V^U_I(\beta_A)&\leq \lv A \rv_{\{1\} \{i\} : \ i \in I  \sep \{k\}: k \in [d]\setminus (I \cup \{1\})  } \leq s_{d-|I|-1}(A)\quad \textrm{for } I\neq \emptyset\label{prop2}
\end{align}
and
\begin{align}\label{ost}
V^U_\emptyset(\beta_A) &  \leq \sup_{(x^2,\ldots,x^{d},t) \in U}\sum_{l=2}^d \sup_{(y^2,\ldots,y^d)\in (B^n_2)^{d-1}} \E \sup_{t' \in T} \sum_{\ii} a_\ii g_{i_1} x^l_{i_l} \prod_{\substack{2\leq k \leq d\\ k\neq l}} y^k_{i_k} t'_{i_{d+1}} \\
&\leq \sup_{(x^2,\ldots,x^{d},t) \in U} \sum_{l=2}^d \lv \sum_{i_j} a_\ii x^l_{i_l} \rv_{\{1\} \sep \{k\}:\ k\in [d]\setminus \{1,l\} }. \nonumber
\end{align}
Inequalities \eqref{prop1}-\eqref{ost} imply that
\begin{align*}
&W^U_d(\alpha_A,\eps)+V^U_d(\be_A,\eps)
\\
&=\sum_{k=2}^{d-1} \eps^k \sum_{I\subset \{2,\ldots,d \}: |I|=k} W^U_I(\alpha_A)+\sum_{k=1}^{d-1} \eps^{k+1} \sum_{I\subset \{2,\ldots,d \}: |I|=k} V^U_I(\be_A) \\
&+\eps\left( \sum_{j=2 }^d W^{U}_{\{j\}}(\alpha_A)+ V^{U}_{\emptyset}(\be_A) \right)
\\
&\leq C(d) \sum_{k=0}^{d-2} \eps^{d-k} s_k(A) + 2\eps \sup_{(x^2,\ldots,x^d,t) \in U} \left(\sum_{l=2}^d \sum_{\substack{(\pp,\pp ') \in \pp([d]\setminus \{l\}) \\ |\pp|= d-2} }
\lv \sum_{i_l} a_\ii x^l_{i_l} \rv_{\pp '\sep \pp} \right).
\end{align*}

Hence the assertion is a simple consequence of Corollary \ref{gorsz}.
\end{proof}

\begin{rem}\label{niezachodzi}
Proposition \ref{prop} is not true for $d+1=3$. The problem arises in \eqref{pop1} -- for $d=2$ there does not exist $\mathcal{P} \in \mathcal{P}([d]\setminus \{l\})$  such that $|\mathcal{P}|=d-2$. This is the main reason why proofs for chaoses of order $d=2$ (cf. \cite{wek}) have a different nature than for higher order chaoses.
\end{rem}

\section{Proof of Theorem \ref{maintheorem} }\label{sek4}

We will prove Theorem \ref{maintheorem} by induction on $d$ (recall that matrix $A$ has order $d+1$). To this end we need to amplify the induction thesis.
For $U\subset (\R^{n})^{d-1} \times \R^m$ we define
$$
F_A(U)=\E \sup_{(x^2,\ldots,x^{d+1}) \in U} \sum_\iid a_\iid g_{i_1} \prod_{k=2}^{d+1} x^k_{i_k}.
$$

\begin{twr}\label{oszglow}
For any $U\subset (B^n_2)^{d-1}\times T$ and any $p\geq 1$
\begin{align}
F_A(U) \leq C(d) \left(\sqrt{p} \Delta_A(U)+ \sum_{k=0}^{d-1} p^{\frac{k+1-d}{2}}s_k(A) \right), \label{costam}
\end{align}
where
$$
\Delta_A(U)
= \sup_{(x^2,\ldots,x^d,t),(y^2,\ldots,y^d,t')\in U}\rho_A((x^2,\ldots,x^d,t),(y^2,\ldots,y^d,t'))
=\mathrm{diam}(A,\rho_{A}).
$$

\end{twr}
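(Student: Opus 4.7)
I would prove Theorem~\ref{oszglow} by induction on $d$ (recall that $A$ has order $d+1$). The base case $d=1$ is essentially vacuous: $F_A(U)$ is then a supremum of the Gaussian process $(\sum_{i_1,i_2}a_{i_1,i_2}g_{i_1}t_{i_2})_{t\in U}$, and $s_0(A)=\E\sup_{t\in T}|\sum a_{i_1,i_2}g_{i_1}t_{i_2}|$ already dominates $F_A(U)$ for any $U\subset T$, so the claimed bound holds with the $\sqrt{p}\,\Delta_A(U)$ term unused.

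For the inductive step the centered Gaussian process $X_u=\sum_\iid a_\iid g_{i_1}\prod_{k=2}^{d+1}x^k_{i_k}$ on $U\subset(B_2^n)^{d-1}\times T$ has intrinsic metric $\rho_A$, so I would attack it via chaining, using the entropy bound from Proposition~\ref{prop}. At the dyadic scale $\eps_n=2^{-n}$ Proposition~\ref{prop} supplies a net $U_n\subset U$ of cardinality $\leq\exp(C(d)\eps_n^{-2})$ whose $\rho_A$-covering radius is
\[
r_n=C(d)\Bigl(\textstyle\sum_{k=0}^{d-2}\eps_n^{d-k}s_k(A)+\eps_n\sup_{U}\hat{\alpha}_A\Bigr).
\]
Pick $N$ with $\eps_N\asymp p^{-1/2}$, so that $|U_N|\leq e^{Cp}$. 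Re-centering at a fixed $u_0\in U_N$, a standard Gaussian maximal inequality gives $\E\sup_{v\in U_N}(X_v-X_{u_0})\leq C\sqrt{p}\,\Delta_A(U)$. The remaining contribution $\E\sup_u|X_u-X_{\pi_N(u)}|$ is handled by telescoping the chaining increments $X_{\pi_n(u)}-X_{\pi_{n-1}(u)}$ over scales $n>N$; a second Gaussian maximal inequality per increment (recalling $|U_n|^2\leq e^{C\eps_n^{-2}}$) bounds this by a quantity of order $\sum_{n>N}r_{n-1}/\eps_n$.

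Substituting the formula for $r_n$, the contribution of the ``regular'' term $\sum_{k=0}^{d-2}\eps_n^{d-k-1}s_k(A)$ gives a decreasing geometric series in $n$ (since $d-k-1\geq 1$), dominated by its first term at $n=N+1$; this sums to $C\sum_{k=0}^{d-2}p^{(k+1-d)/2}s_k(A)$, matching the $k\leq d-2$ terms of the target bound. The other piece $\eps_n\sup_U\hat{\alpha}_A/\eps_n=\sup_U\hat{\alpha}_A$ does not decay with $n$, so naively summing over $n>N$ would produce an infinite $\sup_U\hat{\alpha}_A$ tail; this is precisely the failure of the classical Dudley bound flagged in the remark after Corollary~\ref{gorsz} and is the main technical obstacle. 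I would overcome it by implementing the chaining \emph{generically}: replace the nets $U_n$ by adaptive partitions $\mathcal{A}_n$ of $U$ with $|\mathcal{A}_n|\leq e^{C\eps_n^{-2}}$, so that $\sup_U\hat{\alpha}_A$ in the covering radius is replaced on each cell $A\in\mathcal{A}_n$ by the local $\sup_A\hat{\alpha}_A$. Each summand $\lv\sum_{i_j}a_\ii x^j_{i_j}\rv_{\pp'\sep\pp}$ in $\hat{\alpha}_A$ is a chaos norm of the contracted matrix $B^j(x^j)=\sum_{i_j}a_\ii x^j_{i_j}$ of order $d$ (one smaller than the order of $A$), and taking the supremum over $x^j\in B_2^n$ yields $\sup_U\hat{\alpha}_A\leq C(d)s_{d-1}(A)$. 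The amplification of the induction thesis by the diameter term $\sqrt{p}\,\Delta_A(U)$ is exactly what enables the local application of the induction hypothesis inside each cell of the adaptive partition, so that the aggregated local contributions telescope to the single clean bound $Cs_{d-1}(A)$, filling in the missing $k=d-1$ term of the target.

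Combining the three contributions -- $C\sqrt{p}\,\Delta_A(U)$ from $U_N$, the geometric sum $C\sum_{k=0}^{d-2}p^{(k+1-d)/2}s_k(A)$ from the regular chaining increments, and $Cs_{d-1}(A)=Cp^{0}s_{d-1}(A)$ from the tamed $\hat{\alpha}_A$-tail -- yields the claimed bound and closes the induction. The principal obstacle throughout is the bookkeeping for the adaptive generic chaining: the partitions $\mathcal{A}_n$ must be chosen so that the residual local $\sup\hat{\alpha}_A$ contributions, summed over all cells and scales $n>N$, collapse to the single $s_{d-1}(A)$ term, all while respecting the entropy budget $|\mathcal{A}_n|\leq e^{C\eps_n^{-2}}$.
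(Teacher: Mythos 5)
Your high-level plan -- induction on the order, chaining at scales $\eps_n\asymp 2^{-n}p^{-1/2}$, and taming the non-integrable $\hat{\alpha}_A$ term by making the partitions adaptive -- is the same strategy the paper follows, but two of the steps you defer as bookkeeping are where the proof actually lives, and one of them fails as you have set it up. First, the base case: you start the induction at $d=1$ and then invoke Proposition \ref{prop} in the inductive step, but that proposition is only valid for $d+1\ge 4$; as Remark \ref{niezachodzi} points out, the estimate \eqref{pop1} on $W^U_{\{j\}}(\alpha_A)$ has no analogue when $d=2$ because there is no partition of $[d]\setminus\{l\}$ of cardinality $d-2$. So your induction cannot get from $d=1$ to $d=2$; the paper instead takes the order-$3$ case as the base, imported from \cite{wek}, where the argument is genuinely different.

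Second, the adaptive partitions. Replacing $\sup_U\hat{\alpha}_A$ by a local $\sup$ over each cell gains nothing by itself: $\hat{\alpha}_A$ is a norm evaluated at points, not a diameter, so without recentering every cell can contain points with $\hat{\alpha}_A$ as large as the global supremum. The paper's mechanism is (a) to partition $(B^n_2)^{d-1}$ into cells of small \emph{diameter} in the metric $\hat{\rho}_A$ induced by $\hat{\alpha}_A$, which requires the entropy estimate of Lemma \ref{poprawka}, proved by combining the dual Sudakov inequality (Theorem \ref{thm:dualSud}) with the inductive hypothesis applied to suitably contracted matrices of order $d$; and (b) to \emph{translate} each cell by its center $(\mathbf{y}_i,0)$ and control the resulting change $F_A((\mathbf{y}_i,0)+U_j)-F_A(U_j)$, which is Lemma \ref{przesun} and again uses the inductive hypothesis, this time applied to the contracted matrices $A(\mathbf{y},I)$ of strictly smaller order. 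Your proposal applies the induction hypothesis ``inside each cell,'' but restricting the index set does not lower the order of the chaos -- only contraction against the cell's center does -- and you never describe the translation step or the entropy bound for $\hat{\rho}_A$ that makes the recentered local $\hat{\alpha}_A$-diameters decay geometrically across scales. These two ingredients, assembled through the recursion $c_U(r,l)\le c_U(r-1,l+1)+\dots$ via the maximal inequality of Lemma \ref{rozbij}, are the substance of the proof; without them the asserted collapse of the $\hat{\alpha}_A$-tail to $C\,s_{d-1}(A)$ is not justified.
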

Clearly it is enough to prove Theorem \ref{oszglow} for finite sets $U$.
Observe that
$$
\Delta_A( (B^n_2)^{d-1}\times T )\leq 2\lv A \rv_{\emptyset \sep \{j\}:j \in [d] }  =2 s_d(A),
$$
thus Theorem \ref{oszglow} implies Theorem \ref{maintheorem}.
We will prove \eqref{costam} by induction on $d+1$, but first we will show several consequences of the theorem. In the next three lemmas, we shall assume that Theorem \ref{oszglow} (and thus also Theorem \ref{thm:proc} ) holds for all matrices of order smaller than $d+1$.

\begin{lem}\label{poprawka}
Let $p\geq 1$, $l\geq 0$ and $d+1\geq 4$. Then
$$N\left((B^n_2)^{d-1},\rhp,2^{-l}\sum_{k=0}^{d-1}p^{\frac{k+1-d}{2}}s_k(A)  \right) \leq \exp(C(d)2^{2l}p ),$$
where $\rhp$ is the distance on $(\R^n)^{d-1}$ corresponding to the norm $\haa$.

\end{lem}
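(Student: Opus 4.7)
The plan is to reduce the entropy estimate to a one-dimensional bound via the block structure of $\hat\alpha_A$, then apply a Sudakov-type argument whose Gaussian complexity input is controlled by the inductive hypothesis on Theorem \ref{oszglow}.

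First I would observe that the norm $\hat\alpha_A$ decomposes additively across the $d-1$ blocks, since each term of
\begin{displaymath}
\hat\alpha_A(x^2,\ldots,x^d)=\sum_{j=2}^d\sum_{\substack{(\pp,\pp')\in\pp([d]\setminus\{j\})\\|\pp|=d-2}}\lv\sum_{i_j}a_\ii x^j_{i_j}\rv_{\pp'\sep\pp}
\end{displaymath}
depends on only a single $x^j$. Writing $\hat\alpha_A=\sum_{j=2}^d\hat\alpha_A^{(j)}$, the distance $\rhp$ is a sum of per-block distances, and the standard product estimate
\begin{displaymath}
N\bigl((B^n_2)^{d-1},\rhp,r\bigr)\le\prod_{j=2}^d N\bigl(B^n_2,\hat\alpha_A^{(j)},r/(d-1)\bigr)
\end{displaymath}
reduces the lemma to proving, for each fixed $j$, an analogous bound on $N(B^n_2,\hat\alpha_A^{(j)},r_0)$ with $r_0=c(d)\,2^{-l}R$ and $R:=\sum_{k=0}^{d-1}p^{(k+1-d)/2}s_k(A)$.

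To establish this per-block bound I would apply the dual form of Sudakov's minoration (a consequence of Theorem \ref{norsud} by the standard duality between Gaussian widths and entropy numbers):
\begin{displaymath}
\log N(B^n_2,\alpha,r)\le C\,(\E\alpha(G))^{2}/r^{2}\qquad\text{for any norm }\alpha\text{ on }\R^n,
\end{displaymath}
where $G\sim\mathcal N(0,I_n)$. With $\alpha=\hat\alpha_A^{(j)}$ the task becomes to bound the Gaussian average $\E\hat\alpha_A^{(j)}(G)=\sum_{(\pp,\pp')}\E\lv\sum_{i_j}a_\ii G_{i_j}\rv_{\pp'\sep\pp}$.

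For each summand, since $|\pp|=d-2$ and $\pp\cup\pp'$ partitions $[d]\setminus\{j\}$ (of size $d-1$), the only structural possibilities are $\pp'=\emptyset$ with exactly one part of $\pp$ of size two, or $\pp'=\{\{k\}\}$ for some $k\neq j$ with all parts of $\pp$ singletons. The contracted matrix $(\sum_{i_j}a_\ii G_{i_j})_\ii$ has order $d$; after flattening any size-two block into a single index in $[n^2]$, the inductive hypothesis (Theorem \ref{oszglow} for matrices of order at most $d$) applies in its \emph{full moment form} with the parameter $p$ of the lemma, and yields $\E\lv\sum_{i_j}a_\ii G_{i_j}\rv_{\pp'\sep\pp}\le C(d)\sum_{k=0}^{d-1}p^{(k+2-d)/2}s_k(A)$. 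Summing over the $O(d^2)$ admissible partitions gives $\E\hat\alpha_A^{(j)}(G)\le C(d)\sqrt p\cdot R$. Inserting this into the dual Sudakov bound with $r=r_0$ produces $\log N(B^n_2,\hat\alpha_A^{(j)},r_0)\le C(d)\,2^{2l}\,p$, and the product estimate over $j$ concludes the proof.

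The main obstacle lies in the third step. A naive swap $\E_G\sup\ge\sup\E_G$ only gives $\E\lv\sum_{i_j}a_\ii G_{i_j}\rv_{\pp'\sep\pp}\ge\|A\|_{\pp'\cup\{\{j\}\}\sep\pp}$, which is an inequality in the wrong direction and cannot be reversed without extra input; one must genuinely invoke the inductive hypothesis in its parametric form (not just the $\E\sup$ bound at $p=1$) to produce the $p^{(k+2-d)/2}$-weighting rather than a coarser $\sum_k s_k(A)$. The flattening of a size-two block -- which converts one coordinate into an $[n^2]$-index -- additionally requires that Theorem \ref{oszglow} be read for matrices with mixed index sets, which is implicit but needs to be spelled out.
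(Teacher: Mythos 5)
Your overall strategy coincides with the paper's: bound the Gaussian mean width of $\haa$ by feeding the contracted order-$d$ matrices into the inductive hypothesis, then convert that width into an entropy bound via dual Sudakov and choose the scale $t=(\sqrt{p}\,2^l)^{-1}$. The only structural difference is cosmetic: you cover each block $B_2^n$ separately and multiply the covering numbers, while the paper applies the dual Sudakov inequality once on $(B_2^n)^{d-1}\subseteq\sqrt{d-1}\,B_2^{n(d-1)}$ with the direct-sum norm $\haa$; both work, and your exponent bookkeeping ($p^{(k+2-d)/2}=\sqrt{p}\,p^{(k+1-d)/2}$, hence $\E\haa(G)\le C(d)\sqrt{p}\,R$) is correct.

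There is, however, one genuine gap, and it sits exactly where you locate "the main obstacle" without resolving it. For the terms with $\pp'=\{\{k\}\}$ the quantity to be bounded is
\begin{displaymath}
\E^{g_j}\sup_{x^l\in B_2^n}\E^{g'}\sup_{t\in T}\sum_{\ii}a_{\ii}\,g'_{i_1}g_{i_j}\prod_{l}x^l_{i_l}t_{i_{d+1}},
\end{displaymath}
which is an iterated $\E\sup\E\sup$ involving two independent Gaussian vectors; it is \emph{not} the expected supremum of a single Gaussian process over a product of balls times a fixed set, so Theorem \ref{oszglow} (equivalently Theorem \ref{maintheorem}) does not apply to it verbatim, in any "parametric form". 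The paper's proof bridges this by representing the inner expectation through a deterministic set: one chooses $\mathcal{M}\subset\R^n\otimes\R^m$ with $\E\sup_{t\in T}\sum_{i,j}b_{i,j}g_it_j=\sup_{m\in\mathcal{M}}\sum_{i,j}b_{i,j}m_{i,j}$ for all matrices $b$, turning the whole expression into a single $\E\sup$ over $(B_2^n)^{d-2}\times\widetilde{\mathcal{M}}$ for an order-$d$ matrix $D$, to which the induction applies with $T$ replaced by $\widetilde{\mathcal{M}}$; one must then also check (as in the paper's \eqref{locb}) that the resulting norms $\nor{D}^{\widetilde{\mathcal{M}}}$ are dominated by the corresponding $\nor{A}$, so that the bound closes in terms of $s_k(A)$. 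Your write-up handles the $\pp'=\emptyset$ case correctly (modulo the mixed-index-set reading you already flag), but asserts the $\pp'\neq\emptyset$ case without this representation step, which is the nontrivial move in the argument.
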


\begin{proof}

Note that
$$
\E \ha{G^2,\ldots,G^d}=\sum_{j=2}^d \sum_{\substack{(\pp,\pp ') \in \pp([d]\setminus \{j\})\\|\pp |=d- 2} } \E \lv \sum_{i_j} a_\ii g_{i_j} \rv_{\pp '\sep \pp}.
$$
Up to a permutation of the indexes we have two possibilities
\begin{equation}\label{3.3.1}
 \lv \sum_{i_j} a_\ii g_{i_j} \rv_{\pp '\sep \pp}=\begin{cases}\lv \sum_{i_j}a_\ii g_{i_j} \rv_{\emptyset\sep\{1,2\},\{ \{l\}:\ 3\leq l \leq d,\ l\neq j \}  } &\textrm{or} \\ \lv \sum_{i_j} a_\ii g_{i_j} \rv_{\{1\}\sep \{l\}: 2\leq l \leq d,\ l \neq j }. \end{cases}
\end{equation}

First assume that $ \lv \sum_{i_j} a_\ii g_{i_j} \rv_{\pp '\sep \pp}=\lv \sum_{i_j}a_\ii g_{i_j} \rv_{\emptyset\sep\{1,2\},\{ \{l\}:\ 3\leq l \leq d,\ l\neq j \}  }$. In this case
$$
\lv \sum_{i_j}a_\ii g_{i_j} \rv_{\emptyset\sep \{1,2\},\{ \{l\}:\ 3\leq l \leq d,\ l\neq j \}  }=\lv \sum_{i_1} b_{i_1,\ldots,i_{d}}g_{i_1} \rv_{\emptyset\sep \{2\},\ldots,\{d-1\} }
$$
for an appropriately chosen matrix $B=(b_{i_1,\ldots,i_{d}})$ (we treat a pair of indices $\{1,2\}$ as a single index and renumerate the indices in such a way that $j$,$\{1,2\}$ and $d+1$ would become $1$,$2$ and $d$ respectively).

Clearly,
\begin{align}
\sum_{\substack{(\pp ', \pp) \in \pp([d-1])\\ |\pp|=k}} \lv B \rv_{\pp '\sep \pp}=\sum_{\substack{(\pp ',\pp) \in \mathcal{C}\\ |\pp|=k}} \lv A \rv_{\pp '
\sep \pp}  \leq \sum_{\substack{(\pp ',\pp) \in \pp([d])\\ |\pp|=k}} \lv A \rv_{\pp '\sep \pp}=s_k(A), \label{locb}
\end{align}
where $\mathcal{C}\subset \pp([d])$ is the set of partitions which do not separate $1$ and $2$.

Thus, Theorem \ref{maintheorem} applied to the matrix $B$ of order $d$ yields
\begin{align}\label{3.3.2}
\E \lv \sum_{i_j}a_\ii g_{i_j} \rv_{\emptyset\sep \{1,2\},\{ \{l\}:\ 3\leq l \leq d,\ l\neq j \}  }=\E\lv \sum_{i_1} b_{i_1,\ldots,i_{d}}g_{i_1} \rv_{\emptyset\sep \{2\},\ldots,\{d-1\} } \\
\leq C(d)\sum_{(\pp ', \pp) \in \pp([d-1])}p^{\frac{|\pp |+2-d}{2}}\nor{B}\leq C(d) \sum_{k=0}^{d-1} p^{\frac{k+2-d}{2}} s_k(A). \nonumber
\end{align}
Now assume that $\lv \sum_{i_j} a_\ii g_{i_j} \rv_{\pp '\sep \pp}= \lv \sum_{i_j} a_\ii g_{i_j} \rv_{\{1\}\sep \{l\}: 2\leq l \leq d,\ l \neq j }$ and observe that
\begin{align*}
\E \lv \sum_{i_j} a_\ii g_{i_j} \rv_{\{1\}\sep\{l\}: 2\leq l \leq d,\ l \neq j }&=\E^g \sup_{x^l \in B^n_2,\ 2\leq l \leq d,\ l \neq j } \E^{g'} \sup_{t \in T} \sum_\ii a_\ii g'_{i_1}g_{i_j} \prod_{2\leq l \leq d,\ l \neq j} x^l_{i_l} t_{i_{d+1}}\\
&=\E \sup_{x^l \in B^n_2,\ 2\leq l \leq d,\ l \neq j } \sup_{m\in \mathcal{M}} \sum_\ii a_\ii g_{i_j}  \prod_{2\leq l \leq d,\ l \neq j} x^l_{i_l} m_{i_1,i_{d+1}} \\
&=\E \sup_{x^l \in B^n_2,\ 2\leq l \leq d-1 } \sup_{m \in \widetilde{\mathcal{M}}} \sum_{i_1,\ldots,i_d} d_{i_1,\ldots,i_d} g_{i_1} \prod_{l=2}^{d-1} x^l_{i_l} m_{i_d},
\end{align*}
where $D=(d_{i_1,\ldots,i_d})_{i_1,\ldots,i_d}$ is an appropriately chosen matrix of order $d$, the set $\mathcal{M} \subset \R^n\otimes \R^m $ satisfies
\begin{displaymath}
 \E \sup_{t \in T} \sum_{i,j} b_{i,j} g_i t_j=\sup_{m \in \mathcal{M}} \sum_{i,j} b_{i,j} m_{i,j}\textrm{ for any matrix }  (b_{i,j})_{i\le n, j\leq m},
 \end{displaymath}
and $\widetilde{\mathcal{M}}$ corresponds to $\mathcal{M}$ under a natural identification of $\R^n \otimes \R^m$ with $\R^{nm}$.

Applying Theorem \ref{maintheorem} to the matrix $D$ of order $d$ gives
\begin{align}\label{3.3.3}
\E \lv \sum_{i_j} a_\ii g_{i_j} \rv_{\{1\}\sep \{l\}: 2\leq l \leq d,\ l \neq j }
&=\E \sup_{x^l \in B^n_2,\ 2\leq l \leq d-1 } \sup_{m \in \widetilde{\mathcal{M}}}\sum_{i_1,\ldots,i_d} d_{i_1,\ldots,i_d} g_{i_1} \prod_{l=2}^{d-1} x^l_{i_l} m_{i_d}
\\
&\leq C(d) \sum_{(\pp ',\pp)\in \pp([d-1])} p^{\frac{|\mathcal{P}|+2-d}{2}}\nor{D}^{\widetilde{\mathcal{M}}} \nonumber
\\
&\leq C(d) \sum_{(\pp ',\pp)\in \pp([d])}  p^{\frac{|\mathcal{P}|+2-d}{2}}\nor{A}
\nonumber
\\
&=C(d) \sum_{k=0}^{d-1} p^{\frac{k+2-d}{2}}s_k(A),
\nonumber
\end{align}
where $\nor{D}^{\widetilde{\mathcal{M}}}$ is defined in the same manner as $\nor{A}$
(see \eqref{eq:new-norm-definition}) but the supremum is taken over the set  $\widetilde{\mathcal{M}}$
instead of $T$.
The second inequality in \eqref{3.3.3} can be justified analogously as \eqref{locb}.

Combining \eqref{3.3.1},\eqref{3.3.2}, \eqref{3.3.3} and  the dual Sudakov inequality (Theorem \ref{thm:dualSud},
note that $(B_2^n)^{d-1}\subseteq \sqrt{d-1}B_2^{n(d-1)}$)  we obtain
\begin{multline*}
N\left((B^n_2)^{d-1},\hat{\rho}_A,t \sum_{k=0}^{d-1} p^{\frac{k+2-d}{2}} s_k(A)\right) \\
\leq N\left((B^n_2)^{d-1},\hat{\rho}_A,C(d)^{-1}t\E \haa(G^2,\ldots,G^n) \right) \leq \exp(C(d)t^{-2}).
\end{multline*}
It is now enough to choose $t=(\sqrt{p} 2^l)^{-1}$.
\end{proof}

From now on for $U \subseteq (\R^{n})^{d-1} \times \R^m$ we denote
\begin{displaymath}
  \hat{\alpha}_A(U) = \sup_{(x^2,\ldots,x^d,t) \in U}\ha{(x^2,\ldots,x^d)}.
\end{displaymath}

\begin{lem}
\label{przesun}
Suppose that $d+1\geq 4$, $\textbf{y}=(y^2,\ldots,y^d)\in (B^n_2)^{d-1}$ and $U \subset (B^n_2)^{d-1} \times T$. Then for any $p\geq 1$ and $l\geq 0$, we can find a decomposition
$$
U=\bigcup_{j=1}^N U_j, \ N\leq \exp(C(d)2^{2l}p)
$$
such that for each $j\leq N$,
\begin{align}
\label{6}
F_A((\textbf{y},0)+U_j)
\leq F_A(U_j)+C(d)\left(\ha{\textbf{y}}+ \hat{\alpha}_A(U) +2^{-l}\sum_{k=0}^{d-2} p^{\frac{k+1-d}{2}}s_k(A) \right)
\end{align}
and
\begin{align}
\label{7}
\Delta_A(U_j)\leq 2^{-l} p^{-1/2} \hat{\alpha}_A(U)+2^{-2l}\sum_{k=0}^{d-2} p^{\frac{k-d}{2}}s_k(A).
\end{align}
\end{lem}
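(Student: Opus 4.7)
The plan is to produce the decomposition by combining a $\rho_A$-covering from Proposition \ref{prop} (yielding \eqref{7}) with a $\rhp$-refinement from Lemma \ref{poprawka} (needed for \eqref{6}), and then to control the shift via the binomial expansion of $\prod(x^k+y^k) - \prod x^k$ together with a chaining argument on each piece.

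\emph{Decomposition and \eqref{7}.} I would apply Proposition \ref{prop} with $\varepsilon = 2^{-l} p^{-1/2}$. Since $\varepsilon^{d-k} = 2^{-l(d-k)} p^{-(d-k)/2} \le 2^{-2l} p^{-(d-k)/2}$ for $k \le d-2$ (as $d-k \ge 2$), this covers $U$ by at most $\exp(C(d) 2^{2l} p)$ closed $\rho_A$-balls whose radii are dominated by the right-hand side of \eqref{7}. I would then refine this cover by Lemma \ref{poprawka} applied to the projection of $U$ onto $(B^n_2)^{d-1}$: this multiplies the cardinality by at most $\exp(C(d) 2^{2l} p)$, hence keeps the total at $\exp(C(d) 2^{2l} p)$, and guarantees that each resulting piece $U_j$ also has $\rhp$-diameter of order $2^{-l}\sum_{k=0}^{d-1} p^{(k+1-d)/2} s_k(A)$.

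\emph{Shift estimate \eqref{6}.} Sub-additivity of the supremum gives
\begin{displaymath}
F_A((\textbf{y},0) + U_j) \le F_A(U_j) + \E \sup_{u \in U_j}\bigl(G_{(\textbf{y},0)+u} - G_u\bigr),
\end{displaymath}
where $G_u = \sum_\ii a_\ii g_{i_1} \prod_{k=2}^d x^k_{i_k} t_{i_{d+1}}$ for $u = (x^2,\ldots,x^d,t)$. Using $\prod_{k=2}^d (x^k+y^k)_{i_k} - \prod_{k=2}^d x^k_{i_k} = \sum_{\emptyset \ne I \subseteq \{2,\ldots,d\}} \prod_{k \in I} y^k_{i_k} \prod_{k \notin I} x^k_{i_k}$, the residual splits into $2^{d-1}-1$ pieces. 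For each nonempty $I$ I would select $j^* = \min I$ and enlarge the supremum using $U_j \subseteq (B^n_2)^{d-1} \times T$ together with $y^k \in B^n_2$ for $k \in I \setminus \{j^*\}$, producing a bound comparable to $\lv \sum_{i_{j^*}} a_\ii y^{j^*}_{i_{j^*}} \rv_{\{1\} \sep \{k\}: k \in \{2,\ldots,d\}\setminus\{j^*\}}$, which is a specific summand of $\ha{\textbf{y}}$. The discrepancy between the naive $\E \sup_{x,t}$ produced by the enlargement and the norm $\sup_x \E \sup_t$ defining $\ha{\textbf{y}}$ is bridged by a Dudley-type chaining argument on $U_j$, which leverages the small $\rho_A$- and $\rhp$-diameters established above; a heuristic computation $\sqrt{\log N}\cdot \Delta_A(U_j) \sim 2^l\sqrt{p}\cdot (2^{-l}p^{-1/2}\ha{U}+2^{-2l}\sum p^{(k-d)/2}s_k(A)) = \ha{U}+2^{-l}\sum p^{(k+1-d)/2}s_k(A)$ confirms that the chaining cost matches exactly the $\ha{U}$ and error contributions on the right-hand side of \eqref{6}.

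\emph{Main obstacle.} The principal technical difficulty is the chaining step: converting the extrinsic $\E \sup_{x,t}$ (arising from the naive enlargement) into the intrinsic norm $\sup_x \E \sup_t$ featured in $\haa$ requires careful control of the increment process of $G_{(\textbf{y},0)+u} - G_u$ on $U_j$ in a suitable intrinsic metric, and the two covering metrics obtained above are both needed to capture the correct entropy at every scale. The precise form of the error term $2^{-l}\sum_{k=0}^{d-2} p^{(k+1-d)/2} s_k(A)$ in \eqref{6} is dictated by the balance between the cardinality $\exp(C(d)2^{2l}p)$ and the $\rho_A$-diameter in \eqref{7}.
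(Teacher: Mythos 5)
Your skeleton is right in two respects: the shift is indeed split via the expansion $\prod_k(x^k+y^k)-\prod_k x^k=\sum_{\emptyset\neq I}\prod_{k\in I}y^k\prod_{k\notin I}x^k$, the full-shift term $I=\{2,\ldots,d\}$ does reduce to a summand of $\ha{\y}$ by enlarging the remaining ball variables, and Proposition \ref{prop} with $\eps=2^{-l}p^{-1/2}$ is exactly how \eqref{7} is obtained. But there is a genuine gap in how you handle the intermediate sets $\emptyset\neq I\subsetneq\{2,\ldots,d\}$. For such $I$ the residual process $\ff$ still has free variables $x^j$, $j\notin I$, ranging over $U_j$, so "enlarging the supremum" produces $\E\sup_{x,t}$ of a genuine multi-indexed Gaussian process --- precisely the kind of quantity the whole theorem is trying to bound --- and you defer the reduction to $\sup_x\E\sup_t$ to an unexecuted "Dudley-type chaining argument." This cannot work as stated: the entropy bound of Corollary \ref{gorsz} contains a nonintegrable $\delta^{-1}$ term, so Dudley's integral diverges here (this is the remark following Corollary \ref{gorsz}, and it is the reason the paper builds the entire inductive machinery instead of chaining). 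The paper's actual mechanism is the induction hypothesis: $\ff(S)$ is the supremum functional $F_{A(\y,I)}$ of a matrix $A(\y,I)$ of strictly smaller order $d-|I|+1<d+1$, to which Theorem \ref{oszglow} (assumed for lower orders) applies with parameter $q=2^{2l}p$, yielding $\ff(S)\leq C(q^{1/2}\sr(S)+\ha{\y}+\sum_k q^{(k+1-d)/2}s_k(A))$. Your single-scale heuristic $\sqrt{\log N}\cdot\Delta_A(U_j)$ reproduces the arithmetic of the $q^{1/2}\sr$ term only for $I=\emptyset$, but is not a proof for any $I$.

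The second, related gap is in the covering. Your decomposition controls only $\Delta_A(U_j)=\Delta^{\y,\emptyset}_A(U_j)$ and the $\rhp$-diameter (via Lemma \ref{poprawka}, which the paper does not use in this lemma --- it belongs to Lemma \ref{final}). Neither of these metrics dominates the diameters $\sr(U_j)$ for intermediate $I$, which involve contractions of $A$ against $\prod_{k\in I}y^k_{i_k}$ and are what the induction step actually consumes. The paper must therefore apply Proposition \ref{prop} separately to each matrix $A(\y,I)$ with $|I|<d-2$ (hence the $2^{d-1}-d$ applications), and --- since Proposition \ref{prop} fails for matrices of order $3$ --- run a separate argument through Corollary \ref{gorsz} with ad hoc bounds on $W^{\widetilde U}_2$ and $V^{\widetilde U}_2$ for the case $|I|=d-2$, before intersecting all the resulting partitions. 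Both the per-$I$ coverings and the low-order exceptional case are missing from your plan.
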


\begin{proof}
Fix $\textbf{y}\in (B^n_2)^{d-1}$ and $U \subset (B^n_2)^{d-1} \times T$.
For $I\subset\{2,\ldots,d\}$, $\xx=(x^2,\ldots,x^d,t),\xxx=(\tilde{x}^2,\ldots,\tilde{x}^d,t') \in  (\R^{n})^{d-1} \times \R^m$ and $S\subset  (\R^{n})^{d-1} \times \R^m$, we define
\[
\rh(\xx,\xxx):=\sqrt{\sum_{i_1} \left(\sum_{i_2,\ldots,i_{d+1}} a_\ii \prod_{k \in I} y^k_{i_k} \left(t_{i_{d+1}}\prod_{ \stackrel{2\leq j\leq d}{j \notin I}} x^j_{i_j}-t'_{i_{d+1}}\prod_{ \stackrel{2\leq j\leq d}{j \notin I}} \tilde{x}^j_{i_j} \right) \right)^2 },
\]
\[
\sr(S):=\sup \left\{\rh(\xx,\xxx):\xx,\xxx \in S \right\}
\]
and
\[
\ff(S):=\E \sup_{(x^2,\ldots,x^d,t) \in S} \sum_\ii a_\ii g_{i_1}\prod_{k \in I} y^k_{i_k} \left(\prod_{ \stackrel{2\leq j\leq d}{j \notin I}} x^j_{i_j} \right) t_{i_{d+1}}.
\]

If $I=\{2,\ldots,d\}$ then for $S\subset (B^n_2)^{d-1}\times T$ we have
\begin{align}\label{3.4.1}
F^{\textbf{y},\{2,\ldots,d\}}_A(S)&\leq \E \sup_{t \in T} \sum_\ii a_\ii g_{i_1}\prod_{k =2}^d y^k_{i_k} t_{i_{d+1}}\\
&\leq \sup_{(x^2,\ldots,x^{d-1}) \in (B_2^n)^{d-2}} \E \sup_{t \in T} \sum_\ii a_\ii g_{i_1}\left(\prod_{ j=2}^{d-1} x^j_{i_j} \right) y^d_{i_d} t_{i_{d+1}} \nonumber\\
&= \lv\sum_{i_d} a_\ii y^d_{i_d} \rv_{\{1\}\sep\{k\} \ : \ k=2,\ldots,d-1 }   \leq \ha{\textbf{y}}. \nonumber
\end{align}

If $I\neq \emptyset,\{2,\ldots,d\}$ then Theorem \ref{oszglow} applied to the matrix
$$
A(\textbf{y},I):=\left( \sum_{i_I} a_\ii \prod_{k \in I} y^k_{i_k} \right)_{\ii_{[d+1]\setminus I}}
$$
of order $d-|I|+1<d+1$ gives for any $S\subset (B^n_2)^{d-1}\times T$ and $q\geq 1$,
%\begin{align*}
%\ff(S)\leq C(d-|I|) \Bigg(q^{1/2}\sr(S)+\sum_{J \subset [d]\setminus I} \sum_{\pp \in \pp([d]\setminus (I\cup J))} \nor{A(\textbf{y},I)}\1_{|\pp|+1-d+|I|=0}\\
%+ \sum_{J \subset [d]\setminus I} \sum_{\pp \in \pp([d]\setminus (I\cup J))} q^{\frac{|\pp|+1-d+|I}{2}}\nor{A(\textbf{y},I)}\1_{|\pp|+1-d+|I|<0} \Bigg).
%\end{align*}
\begin{align*}
\ff(S)\leq C(d-|I|) \left(q^{1/2}\sr{(S)}+\sum_{k=0}^{d-|I|-1} q^{\frac{k+1-d+|I|}{2}}s_k(A(\y,I))  \right).
\end{align*}

For any $2\leq k\leq d$,  $y^k \in B^n_2$ thus $s_k(A(\y,I))\leq s_{k+|I|}(A)$ for $k<d-|I|-1$ and $s_{d-|I|-1}(A(\y,I))\leq \ha{\y}$.
%$$\sum_{J \subset [d]\setminus I} \sum_{\pp \in \pp([d]\setminus (I\cup J))} \nor{A(\textbf{y},I)}\1_{|\pp|+1-d+|I|=0}\leq \ha{\textbf{y}}$$
%and (we treat $\pp$ nad $I$ as a partition $\pp\cup_{i \in I} \{i\}$)
%$$ \sum_{J \subset [d]\setminus I} \sum_{\pp \in \pp([d]\setminus (I\cup J))} q^{\frac{|\pp|+1-d+|I}{2}}\nor{A(\textbf{y},I)}\1_{|\pp|+1-d+|I|<0}\leq  \sum_{J \subset [d]\setminus I} \sum_{\pp \in \pp([d]\setminus (I\cup J))} q^{\frac{|\pp|+1-d}{2}}\nor{A}\1_{|\pp|+1-d<0}.$$
Hence,
\begin{align}
\ff(S)\leq C(d-|I|) \left(q^{1/2}\sr(S)+\ha{\textbf{y}}+\sum_{k=0}^{d-2} q^{\frac{k+1-d}{2}}s_k(A)\right). \label{3.4.2}
\end{align}
By the triangle inequality,
\begin{align*}
F_A((y^2,\ldots,y^d,0)+S)-F_A(S)\leq \sum_{\emptyset\neq I\subset \{2,\ldots,d\}} \ff(S).%\label{3.4.3}
\end{align*}
Combining \eqref{3.4.1} and \eqref{3.4.2} we obtain for $S\subset (B^n_2)^{d-1}\times T$ and $q\geq 1$,
\begin{multline}
F_A((y^2,\ldots,y^d,0)+S)\\
\leq F_A(S)+C(d)\left(\ha{\textbf{y}}+\sum_{\emptyset\neq I\subsetneq \{2,\ldots,d\}}  q^{1/2}\sr(S)+\sum_{k=0}^{d-2} q^{\frac{k+1-d}{2}}s_k(A) \right). \label{3.4.4}
\end{multline}

Fix $I\subsetneq  \{2,\ldots,d\}$, $|I| < d-2$ (we do not exclude $I=\emptyset$). Taking supremum over $\textbf{y} \in (B^n_2)^{d-1}$ we conclude that

\begin{align*}
\sup_{(x^2,\ldots,x^d,t) \in U}\hat{\alpha}_{A(\y,I)}(\{x^k\}\ : \ k\in \{2,\ldots,d\}\setminus I)\leq \sup_{(x^2,\ldots,x^d,t) \in U}\haa((x^2,\ldots,x^d)).
\end{align*}

%\begin{align*}
%\sum_{\stackrel{j=2}{j \notin I}}^d W^U_{\{j\}}(A(\y,I))+V^U_\emptyset(\beta,A(\y,I))= \sum_{\stackrel{j=2}{j \notin I}}^d \sup_{(x^2,\ldots,x^d,t) \in U} \sqrt{\sum_{i_1,i_j}\left(\sum_{i_{\{[d+1]\setminus\{1,j\}}\}} a_\ii \prod_{k \in I} y^k_{i_k} \prod_{k \in [d] \setminus (\{1,j\}\cup I)} x^k_{i_k} t_{i_{d+1}} \right)^2}\\
%+ \sup_{(x^2,\ldots,x^d,t) \in U}  \E \sup_{t' \in T} \sum_{\ii} a_\ii g_{i_1}  \prod_{k \in I} y^k_{i_k} \prod_{k \in [d] \setminus (\{1,j\}\cup I)} x^k_{i_k} t'_{i_{d+1}}\leq \sup_{(x^2,\ldots,x^d,t) \in U} \ha{(x^2,\ldots,x^d,t)}.
%\end{align*}

Recall also that $s_k(A(\y,I))\leq s_{k+|I|}(A)$, thus we may apply $2^{d-1}-d$ times Proposition \ref{prop} with $\varepsilon=2^{-l} p^{-1/2}$ and find a decomposition $U=\bigcup_{j=1}^{N_1} U_j'$, $N_1\leq \exp(C(d)2^{2l}p)$ such that for each $j$ and $I\subset \{2,\ldots,d\}$ with $|I| < d-2$,
\begin{align}\label{3.4.5}
\sr(U_j')\leq 2^{-l}p^{-1/2} \ha{U}+2^{-2l} \sum_{k=0}^{d-2} p^{\frac{k-d}{2}}s_k(A).
\end{align}

If $|I| = d-2$ then the distance $\rho_A^{\mathbf{y},I}$ corresponds to a norm $\alpha_{A(\mathbf{y},I)}$ on $\R^{nm}$ given by
\begin{displaymath}
  \alpha_{A(\mathbf{y},I)}(\mathbf{x}) = \sqrt{\sum_{i_1} \left(\sum_{i_2,\ldots,i_{d+1}} a_\ii x_{i_{\{j,d+1\}}} \prod_{k \in I}y_{i_k}^k \right)^2 },
\end{displaymath}
where $j$ is defined by the condition $\{1,j\} = [d]\setminus I$ (cf. \eqref{eq:distance-definition}, \eqref{eq:norm-definition}  and observe that
$A(\mathbf{y},I)$ is an $n\times m$ matrix). We define also (as in \eqref{eq:definition-beta})
$$
\beta_{A(\mathbf{y},I)}(\mathbf{x})=\E \sup_{t \in T} \sum_{\ii} a_\ii g_{i_1} x_{i_j} \prod_{l \in I} y^l_{i_l} t_{i_{d+1}}.
$$

Recall the definitions \eqref{loc}  and \eqref{eq:definition-V} and note that (denoting by $\widetilde{U}$ the projection of $U$ onto the $j$-th and $(d+1)$-th coordinate)

\begin{align}\label{eq:klopotliwe-I}
  W^{\widetilde{U}}_{2}( \alpha_{A(\mathbf{y},I)(\mathbf{x})}, \varepsilon) & = \varepsilon \sup_{(x^2,\ldots,x^d,t)\in U} \E \sqrt{\sum_{i_1} \left(\sum_{i_2,\ldots,i_{d+1}} a_\ii g_{i_j}t_{i_{d+1}} \prod_{k \in I} y_{i_k}^k \right)^2 }\\
  & \le \eps \sup_{(x^2,\ldots,x^d,t) \in U} \sqrt{\sum_{i_1,i_j} \left(\sum_{\ii_{I}} a_\ii \prod_{k\in I} y^k_{i_k} t_{i_{d+1}}  \right)^2} \leq  \eps \hat{\alpha}(\mathbf{y}). \nonumber
\end{align}
where we again used that  $y_k \in B_2^n$, $U \subset (B_2^n)^{d-1}\times T$.

We also have
\begin{align*}
  V_{\{j\}}^{\widetilde{U}}(\beta_{A(\mathbf{y},I)}) = \E \sup_{t \in T}\sum_\ii a_\ii g_{i_1}^1 g_{i_j}^2 t_{i_k} \prod_{k\in I}y_{i_k}^k \le s_{d-2}(A)
\end{align*}
and
\begin{align*}
V_\emptyset^{\widetilde{U}}(\beta_{A(\mathbf{y},I)}) = \sup_{(x^2,\ldots,x^d,t)\in U} \E \sup_{t' \in T} \sum_\ii a_\ii g_{i_1} x_{i_j}^j t_{i_{d+1}}'\prod_{k\in I}y_{i_k}^k \le  \hat{\alpha}(\mathbf{y}).
\end{align*}

Thus
\begin{displaymath}
V^{\widetilde{U}}_2(\beta_{A(\mathbf{y},I)},\varepsilon) \le \varepsilon \hat{\alpha}(\mathbf{y}) + \varepsilon^2 s_{d-2}(A).
\end{displaymath}

Taking $\varepsilon = 2^{-l-1}p^{-1/2}$ and combining the above estimate with \eqref{eq:klopotliwe-I} and Corollary \ref{gorsz} (applied $d-1$ times) we obtain a partition $U = \bigcup_{j=1}^{N_2} U_j''$ with $N_2 \le \exp(C(d)2^{2l}p)$ and
\begin{align}\label{eq:klopotliwe-zbiory-dodatkowe-rozbicie}
  \sr(U_j'')\le 2^{-l}p^{-1/2} \hat{\alpha}_A(\mathbf{y}) + 2^{-2l}p^{-1} s_{d-2}(A)
\end{align}
for any $I \subset \{2,\ldots,d\}$ with $|I| = d-2$ and $j \le N_2$.

Intersecting the partition $(U_i')_{i \leq N_1}$ (which fullfills \eqref{3.4.5}) with $(U''_j)_{j\leq N_2}$ we obtain a partition $U = \bigcup_{i=1}^N U_i$ with $N \le N_1N_2 \le \exp(C(d)2^{2l}p)$ and such that for every $i\le N$ there exist $j \le N_1$ and $l \le N_2$ such that $U_i \subset U_j'\cap U_l''$.

Inequality \eqref{6} follows by \eqref{3.4.4} with $q=2^{2l}p$, \eqref{3.4.5} and \eqref{eq:klopotliwe-zbiory-dodatkowe-rozbicie}. Observe that \eqref{7} follows by \eqref{3.4.5} for $I=\emptyset$.
\end{proof}

\begin{lem}\label{final}
Suppose that $U$ is a finite subset of $(B^n_2)^{d-1}\times T$, with $|U|\geq 2$ and $U-U\subset (B^n_2)^{d-1} \times (T-T)$. Then for any $p\geq 1, l \ge 0$ there exist finite sets $U_i\subset  (B^n_2)^{d-1} \times T$ and $(\y_i,t_i) \in U$, $i=1,\ldots,N$ such that
\begin{enumerate}[(i)]
\item $2\leq N \leq \exp(C(d)2^{2l}p)$,
\item $U=\bigcup_{i=1}^N ((\y_i,0)+U_i), (U_i-U_i) \subset U-U,|U_i|\leq |U|-1$,
\item $\Delta_A(U_i)\leq 2^{-2l}\sum_{k=0}^{d-1}p^{\frac{k-d}{2}}s_k(A)  $,
\item $\ha{U_i}\leq 2^{-l} \sum_{k=0}^{d-1}p^{\frac{k+1-d}{2}}s_k(A)$,
\item $F_A((\y_i,0)+U_i)\leq F_A(U_i)+C(d)\left(\ha{U}+ 2^{-l} \sum_{k=0}^{d-1}p^{\frac{k+1-d}{2}}s_k(A)\right)$.
\end{enumerate}

\end{lem}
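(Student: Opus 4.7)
The plan is a two-stage partition. Write $M:=\sum_{k=0}^{d-1}p^{(k+1-d)/2}s_k(A)$. First, I would apply Lemma \ref{poprawka} with parameter $l+c_1$ (for a universal constant $c_1$ chosen below) to cover $(B_2^n)^{d-1}$ by at most $\exp(C(d)2^{2l}p)$ sets of $\rhp$-diameter at most $2^{-l-c_1}M$. Disjointifying this cover and intersecting with the projection $\pi(U)$ of $U$ onto $(B_2^n)^{d-1}$ yields a partition $\pi(U)=\bigsqcup_i V_i$. For each nonempty $V_i$, pick a centre $\y_i^*\in V_i$ together with some $t_i^*\in T$ such that $(\y_i^*,t_i^*)\in U$, and set
$$\tilde V_i:=\{(\y-\y_i^*,\,t):(\y,t)\in U,\ \y\in V_i\}.$$
The hypothesis $U-U\subset (B_2^n)^{d-1}\times(T-T)$ guarantees $\tilde V_i\subset(B_2^n)^{d-1}\times T$ and $\tilde V_i-\tilde V_i\subset U-U$, while by construction $\ha{\tilde V_i}\le\mathrm{diam}_{\rhp}(V_i)\le 2^{-l-c_1}M$, which already gives property (iv).

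In the second stage I apply Lemma \ref{przesun} to each $\tilde V_i$ with shift $\y=\y_i^*$ and parameter $l+c_2$ (a second universal constant), producing a further partition $\tilde V_i=\bigsqcup_j W_{i,j}$ into at most $\exp(C(d)2^{2l}p)$ pieces satisfying the two conclusions of that lemma. Relabelling $U_{i,j}:=W_{i,j}$, $\y_{i,j}:=\y_i^*$, $t_{i,j}:=t_i^*$ yields the decomposition $U=\bigcup_{i,j}((\y_{i,j},0)+U_{i,j})$ with total count $N\le\exp(C(d)2^{2l}p)$.

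The verification of the numerical bounds is then direct substitution. For (iii), the two terms in the $\Delta_A$-bound of Lemma \ref{przesun}, after inserting $\ha{\tilde V_i}\le 2^{-l-c_1}M$, combine into
$$\bigl(2^{-c_1-c_2}+2^{-2c_2}\bigr)\cdot 2^{-2l}\sum_{k=0}^{d-1}p^{(k-d)/2}s_k(A),$$
which falls below the target once $c_1,c_2$ are fixed large enough; the crucial point is that the factor $p^{-1/2}$ from Lemma \ref{przesun}, multiplied by the $p^{1/2}s_{d-1}(A)$ term inside $M$, supplies exactly the $k=d-1$ summand that is absent in Lemma \ref{przesun} on its own. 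For (v), the estimates $\ha{\y_i^*}\le\ha{U}$ (since $\y_i^*\in\pi(U)$) and $\ha{\tilde V_i}\le 2^{-l}M$ absorb the three error terms of Lemma \ref{przesun} into $C(d)(\ha{U}+2^{-l}M)$. The inclusion $U_i-U_i\subset U-U$ in (ii) follows from $U_i\subset\tilde V_i$ and the construction of $\tilde V_i$. Finally, if the above procedure produces only one nonempty piece, I would split it by hand into a singleton and its complement, both of which inherit (iii)--(v) from the parent via the monotonicity $F_A(S')\le F_A(S)$ for $S'\subset S$; the hypothesis $|U|\ge 2$ makes this always possible and supplies $N\ge 2$ and $|U_i|\le|U|-1$.

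The main obstacle is the delicate accounting between the two stages: one must verify that the $\rhp$-diameter reduction of the first stage and the $p^{-1/2}$ gain of the second stage combine to reconstruct precisely the $k=d-1$ terms missing from Lemma \ref{przesun}, without any slack that would spoil (iii). Once the universal constants $c_1,c_2$ are chosen (depending only on numerical factors), the count $\exp(C(d)2^{2l}p)$ is preserved by absorbing $2^{2c_i}$ into $C(d)$, and no further subtlety remains.
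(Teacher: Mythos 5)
Your proposal follows essentially the same route as the paper: a first-stage partition of $(B^n_2)^{d-1}$ via Lemma \ref{poprawka} controlling the $\hat{\rho}_A$-diameter, followed by a second-stage application of Lemma \ref{przesun} to each shifted piece, with exactly the same accounting (in particular the key observation that the $p^{-1/2}\,\hat{\alpha}_A(U_i)$ term in \eqref{7} reconstructs the missing $k=d-1$ summand of (iii)); the paper simply takes $l$ and $l+1$ where you take $l+c_1$ and $l+c_2$. One small remark: for the forced splitting that guarantees $N\ge 2$ and $|U_i|\le |U|-1$, property (v) is inherited by subsets not because $F_A$ is monotone (that inequality points the wrong way) but because the proof of \eqref{6} in Lemma \ref{przesun} bounds the error solely through diameters and hence applies verbatim to any subset of a given piece -- a point the paper itself dispatches with ``without loss of generality.''
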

\begin{proof}

By Lemma \ref{poprawka}  we get
$$
(B^n_2)^{d-1}=\bigcup_{i=1}^{N_1} B_i,\ N_1\leq \exp(C(d)2^{2l}p),
$$
where the diameter of the sets $B_i$ in the norm $\hat{\alpha}$ satisfies
$$
{\rm diam}(B_i,\hat{\alpha}_A) \le 2^{-l}\sum_{k=0}^{d-1}p^{\frac{k+1-d}{2}}s_k(A).
$$
Let $U_i=U\cap (B_i\times T)$. Selecting arbitrary $(\mathbf{y}_i,t_i) \in U_i$ (we can assume that these sets are nonempty) and using Lemma \ref{przesun}  (with $l+1$ instead of $l$) we decompose $U_i-(\y_i,0)$ into $\bigcup_{j=1}^{N_2} U_{ij} $ in such a way that $N_2\leq \exp(C(d)2^{2l}p)$,
\begin{align*}
F_A((\y_i,0)+U_{ij})
&\leq F_A(U_{ij})+C(d)\Bigg(\ha{\textbf{y}_i}+\ha{U_i-(\y_i,0)}+2^{-l}\sum_{k=0}^{d-2}p^{\frac{k+1-d}{2}}s_k(A) \Bigg)
\\
&\leq F_A(U_{ij})+C(d)\Bigg(\ha{\textbf{y}_i}+{\rm diam}(B_i,\hat{\alpha}_A) +2^{-l}\sum_{k=0}^{d-1}p^{\frac{k+1-d}{2}}s_k(A)\Bigg)
\\
&\leq  F_A(U_{ij})+C(d)\Bigg(\ha{U}+2^{-l}\sum_{k=0}^{d-1}p^{\frac{k+1-d}{2}}s_k(A) \Bigg)
\end{align*}
and
\begin{align*}
\Delta_A(U_{ij})\leq 2^{-l-1} p^{-1/2} \ha{U_i-(\y_i,0)}+2^{-2l-2}\sum_{k=0}^{d-2}p^{\frac{k-d}{2}}s_k(A)
\leq 2^{-2l}\sum_{k=0}^{d-1}p^{\frac{k-d}{2}}s_k(A)  .
\end{align*}
We take the decomposition $U=\bigcup_{i,j} ((\y_i,0)+U_{ij}).$
We have $N=N_1N_2\leq \exp(C(d)2^{2l}p)$. Without loss  of generality we can assume $N\geq 2$ and $|U_{i,j}|\leq |U|-1$. Obviously, $U_{ij}-U_{ij}\subset U_i-U_i \subset U-U$ and $\ha{U_{ij}}\leq \ha{U_i-(\y_i,0)}\leq 2^{-l} \sum_{k=0}^{d-1}p^{\frac{k+1-d}{2}}s_k(A) $. A relabeling of the obtained decomposition concludes the proof.
\end{proof}
\begin{proof}[Proof of Theorem \ref{oszglow}]

In the case of $d+1=3$ Theorem \ref{oszglow} is proved in \cite{wek} (see Remark 37 therein).

Assuming \eqref{costam}  to hold  for all matrices of order $\{3,4,\ldots,d\}$, we will prove it for matrices of order $d+1\geq 4$. Let $U\subset (\R^n)^{d-1} \times \R^m$ and let us put $\Delta_0=\Delta_A(U)$, $\dd_0=\ha{(B^n_2)^{d-1}\times T}\leq C(d) s_{d-1}(A)$,
\begin{align*}
\Delta_l&:=2^{2-2l} \sum_{k=0}^{d-1}p^{\frac{k-d}{2}}s_k(A), \ \dd_l:=2^{1-l} \sum_{k=0}^{d-1}p^{\frac{k+1-d}{2}}s_k(A) \textrm{ for } l\geq 1.
\end{align*}

Suppose first that $U\subset (\frac{1}{2} (B^n_2)^{d-1})\times T$ and define
\begin{align*}
c_U(r,l):=\sup \Big\{F_A(S)\colon\ &S\subset (B^n_2)^{d-1}\times T, S-S \subset U-U,
\\
&|S|\leq r,\Delta_A(S)\leq \Delta_l,\ha{S}\leq \dd_l \Big\}.
\end{align*}
Note that any subset $S\subset U$ satisfies $\Delta_A(S)\leq \Delta_0$ and $\ha{S}\leq \dd_0$, therefore,
\begin{align} \label{k.0}
c_U(r,0)\geq \sup \{F_A(S):S\subset U,|S|\leq r \}.
\end{align}
 We will now show that for $r\geq 2$,
\begin{align} \label{k.1}
c_U(r,l)\leq c_U(r-1,l+1)+C(d)\left(\dd_l+2^l\sqrt{p}\Delta_l+2^{-l}\sum_{k=0}^{d-1}p^{\frac{k+1-d}{2}}s_k(A)   \right).
\end{align}
Indeed, let us take $S\subset (B^n_2)^{d-1}\times T$ as in the definition of $c_U(r,l)$. Then by Lemma \ref{final} we may find a decomposition $S=\bigcup_{i=1}^N ((\y_i,0)+S_i)$ satisfying $(i)-(v)$
with $U$, $U_i$ replaced by $S$, $S_i$. Hence, by Lemma \ref{rozbij},  we have
\begin{align}
\label{k.2}
F_A(S)
&\leq C\sqrt{\log N} \Delta_A(S)+\max_i F_A((\y_i,0)+S_i)
\\
\nonumber
&\leq C(d) \left(2^l\sqrt{p}\Delta_l+\ha{S}+2^{-l}\sum_{k=0}^{d-1}p^{\frac{k+1-d}{2}}s_k(A) \right)+\max_iF_A(S_i).
\end{align}

We have $\Delta_A(S_i)\leq \Delta_{l+1}$, $\ha{S_i} \leq \hat{\Delta}_{l+1},\  S_i-S_i\subset S-S \subset U-U$ and $|S_i|\leq |S|-1\leq r-1$, thus $\max_i F_A(S_i)\leq c_U(r-1,l+1)$ and \eqref{k.2} yields \eqref{k.1}.
Since $c_U(1,l)=0$, \eqref{k.1} yields
\begin{align*}
c_U(r,0)\leq C(d)\sum_{l=0}^\infty \left(\dd_l+ 2^l \sqrt{p} \Delta_l+2^{-l}\sum_{k=0}^{d-1}p^{\frac{k+1-d}{2}}s_k(A)  \right).
\end{align*}
For $U\subset (\frac{1}{2} (B^n_2)^{d-1})\times T$, we have by \eqref{k.0}
\begin{align*}
F_A(U)
&=\sup \{F_A(S): S\subset U, |S|< \infty \} \leq \sup_r c_U(r,0)
\\
&\leq C(d) \left(\sqrt{p} \Delta_A(U)+\sum_{k=0}^{d-1}p^{\frac{k+1-d}{2}}s_k(A) \right).
\end{align*}
Finally, if $U\subset  (B^n_2)^{d-1}\times T$, then $U' := \{(\mathbf{y}/2,t)\colon (\mathbf{y},t) \in U\} \subset (\frac{1}{2}  (B^n_2)^{d-1})\times T$ and $\Delta_A(U')=2^{1-d} \Delta_A(U)$, hence,
\begin{align*}
F_A(U)=2^{d-1}F_A(U')\leq C(d)\left(\sqrt{p} \Delta_A(U)+\sum_{k=0}^{d-1}p^{\frac{k+1-d}{2}}s_k(A)  \right).
\end{align*}

\end{proof}

\section{Proofs of main results}\label{sek5}
\label{sec:proofs}
We return to the notation used Section \ref{dwa2}. In particular in this section the multi-index $\ii$ takes values in $[n]^d$
(instead of $[n]^{d}\times[m]$ as we had in the two previous sections) and all summations over $\ii$ should be understood as summations over $[n]^d$.

\subsection{Proofs of Theorems \ref{thm:gl} and \ref{thm:og}}

\begin{proof}[Proof of Theorem \ref{thm:gl}]
We start with the lower bound. Fix $J\subset [d]$, $\pp \in \pp([d]\setminus J)$ and observe that
\begin{align*}
 \lv  \sum_\ii a_\ii \prod_{k=1}^d g^k_{i_k}  \rv_p & \geq \left(\E^{(G^j):j\in J} \sup_{\substack{\varphi \in F^*\\ \lv \varphi \rv\leq 1}} \E^{(G^j):j\in [d]\setminus J} \left|\varphi\left(\sum_\ii a_\ii \prod_{k=1}^d g^k_{i_k}  \right)\right|^p  \right)^{1/p}
\\
&\geq c(d) \left( \E^{(g^j):j\in J}  p^{\frac{p|\pp|}{2}}
\left| \! \left| \! \left|\left( \sum_{i_J} a_\ii \prod_{j \in J} g^j_{i_j}\right)_{i_{[d]\setminus J}} \right| \! \right| \! \right|_{\mathcal{P}}^p  \right)^{1/p}\\
&\geq c(d) p^{\frac{|\pp|}{2}} \E \left| \! \left| \! \left|\left( \sum_{i_J} a_\ii \prod_{j \in J} g^j_{i_j}\right)_{i_{[d]\setminus J}} \right| \! \right| \! \right|_{\mathcal{P}}  = c(d) p^{\frac{|\pp|}{2}} \altnor{ A },
\end{align*}
where $F^*$ is the dual space and in the second inequality we used Theorem \ref{momrealgaus}.

The upper bound will be proved by an induction on $d$. For $d+1=3$ it is showed in \cite{wek}. Suppose that $d+1\geq 4$ and the estimate holds for matrices of order $\{2,3,\ldots,d\}$. By the induction assumption, we have
\begin{align}
\lv  \sum_\ii a_\ii \prod_{k=1}^d g^k_{i_k}  \rv_p \leq  C(d)\sum_{(\pp,\pp ') \in \pp([d-1])} p^{\frac{|\pp|}{2}} \lv \lv\sum_{i_{d}} a_\ii g_{i_{d}}\rv_{\pp '\sep\pp} \rv_p. \label{4.1.0}
\end{align}
Since $\nor{\cdot}$ is a norm Lemma \ref{kon} yields
\begin{align}
\lv \lv\sum_{i_{d}} a_\ii g_{i_{d}}\rv_{\pp '\sep\pp} \rv_p \leq C \E \lv\sum_{i_{d}} a_\ii g_{i_{d}}\rv_{\pp '\sep\pp} +C \sqrt{p} \lv A \rv_{\pp '\sep \pp \cup \{d\} }. \label{aal}
\end{align}

Choose $\pp=(I_1\ldots,I_k) ,\pp '=(J_1,\ldots,J_m)$ and denote  $J=\bigcup \pp '$. By the definition of $\nor{A}$ we have
\begin{align}
&\lv\sum_{i_{d}} a_\ii g_{i_{d}}\rv_{\pp '\sep\pp}
\label{tosup}
\\
\nonumber
&=\sup \left\{ \E^{(G^1,\ldots,G^m)} \lv \sum_\ii \ai \x{k} \prod_{l=1}^m g^{l}_{i_{J_l}} g^d_{i_d} \rv \ \Big{|}\ \forall_{j=1\ldots,k} \sum_{i_{I_j}} \left(x^{(j)}_{i_{I_j}}\right)^2=1 \right\}
\\
\nonumber
&=\sup \left\{ \left|\!\left|\!\left|\left(\sum_{i_{[d]\setminus J}}\ai \x{k} g^d_{i_d}\right)_{i_J}
\right|\!\right|\!\right|  \ \Big{|}\ \forall_{j=1\ldots,k} \sum_{i_{I_j}} \left(x^{(j)}_{i_{I_j}}\right)^2=1 \right\},
\end{align}
where  $G^l=(g_{i_{J_l}})_{i_{J_l}}$ and $\robn{\ \cdot \ }$ is a norm on $F^{ n^{|J|}}$ given by
$$\robn{(a_{i_{J}})_{i_J}}=\E \lv \sum_{i_J} a_{i_J} \prod_{l=1}^m g^{l}_{i_{J_l}} \rv.$$

Theorem \ref{thm:proc} implies that
\begin{align}
\nonumber
\E &\sup \left\{  \left|\!\left|\!\left|\left(\sum_{i_{[d]\setminus J}}\ai \x{k} g^d_{i_d}\right)_{i_J}\right|\!\right|\!\right|   \  \Big{|}\ \forall_{j=1\ldots,k} \sum_{i_{I_j}} \left(x^{(j)}_{i_{I_j}}\right)^2=1 \right\}
\\
\nonumber
&\leq C(k)\sum_{(\rr ', \rr ) \in \pp([d]\setminus J)} p^{\frac{|\rr |-k}{2}} \robn{A}_{\rr '\sep\rr}
= C(k) \sum_{(\rr ', \rr ) \in \pp([d]\setminus J)}  p^{\frac{|\rr |-k}{2}} \lv A \rv_{\rr '\cup \pp '\sep\rr}
\\
\nonumber
&\leq C(k) \sum_{(\rr ', \rr) \in \pp([d])} p^{\frac{|\rr |-k}{2}} \lv A \rv_{\rr '\sep\rr},
\end{align}
where $\robn{A}_{\rr '\sep\rr}$ is defined as $\lv A \rv_{\rr '\sep\rr}$ but under the expectation occurs the norm $\robn{\cdot}$.

The above and \eqref{tosup} yield
\begin{equation}
\E \lv\sum_{i_{d}} a_\ii g_{i_{d}}\rv_{\pp '\sep\pp}
\leq C(k) \sum_{(\rr ', \rr) \in \pp([d])} p^{\frac{|\rr |-k}{2}} \lv A \rv_{\rr '\sep\rr}. \label{kon2}
\end{equation}
Since $| \pp |=k$ the theorem follows from \eqref{4.1.0},\eqref{aal} and \eqref{kon2}.
\end{proof}

\begin{proof}[Proof of Theorem \ref{thm:og}]

Let $S=\lv \sum a_\ii \pg \rv$. Chebyshev's inequality and Theorem \ref{thm:gl} yield for $p > 0$,
\begin{align}
\Pro \left(S\geq C(d) \sum_{(\pp,\pp ') \in \pp([d])}  p^{|\pp | /2} \nor{A} \right)&\leq e^{1-p}.
\end{align}
Now we substitute
\[
t= C(d)\sum_{\pp ' \in \pp([d])}\|A\|_{\pp' \sep \emptyset}
+C(d)\sum_{\substack{(\pp,\pp ') \in \pp([d])\\ |\pp|\geq 1}}  p^{|\pp | /2} \nor{A}:=t_1+t_2
\]
and observe that if $t_1<t_2$ then
$$
p\geq
\frac{1}{C(d)} \min_{\substack{(\pp,\pp ') \in \pp([d])\\ |\pp|>0}} \left(\frac{t}{\nor{A}} \right)^{2/|\pp|} .
$$
The first inequality of the theorem follows then by adjusting the constants.

On the other hand by the Paley-Zygmund inequality we get for $p \ge 2$,
\begin{align*}
\Pro\left(S\geq C^{-1}(d)\sum_{J\in [d]} \sum_{\pp\in \pp(J)} p^{|\pp|/2}\altnor{A}\right)
&\geq \Pro\left(S^p\geq \frac{1}{2^p} \E S^p \right)
\\
&\geq \left(1-\frac{1}{2^p}\right)^2 \frac{(\E S^p)^2}{\E S^{2p}}\geq e^{-C(d)p},
\end{align*}
where in the last inequality we used Theorem \ref{gaushiper}. The inequality follows by  a similar substitution as
for the upper bound.
\end{proof}

\subsection{Proof of Proposition \ref{prop:splitting-derivatives} and Theorem \ref{thm:general-polynomials}}

Let us first note that Proposition \ref{prop:splitting-derivatives} reduces \eqref{eq:general-poly-1} of Theorem \ref{thm:general-polynomials} to the lower estimate given in Theorem \ref{thm:gl}, while \eqref{eq:general-poly-3} is reduced to Corollary \ref{cor:special-spaces}. The tail bounds \eqref{eq:general-poly-2} and \eqref{eq:general-poly-4} can be then obtained by Chebyshev's and Paley-Zygmund inequalities as in the proof of Theorem \ref{thm:og}. The rest of this section will be therefore devoted to the proof of Proposition \ref{prop:splitting-derivatives}.

The overall strategy of the proof is similar to the one used in \cite{AW} to obtain the real valued case of Theorem \ref{thm:general-polynomials}. It relies on a reduction of inequalities for general polynomials of degree $D$ to estimates for decoupled chaoses of degree $d=1,\ldots,D$. To this end we will approximate general polynomials by tetrahedral ones and split the latter into homogeneous parts of different degrees, which can be decoupled. The splitting may at first appear crude but it turns out that up to constants depending on $D$ one can in fact invert the triangle inequality, which is formalized in the following result due to Kwapie\'n (see \cite[Lemma 2]{kwa}). Recall that a multivariate polynomial is called tetrahedral, if it is affine  in each variable.

\begin{twr}\label{thm_Kwapien}
%\begin{theorem}[Kwapie\'n, \cite[Lemma 2]{KwaDec}]\label{thm_Kwapien}
If $X = (X_1,\ldots,X_n)$ where $X_i$ are independent symmetric random variables, $Q$ is a multivariate tetrahedral polynomial of degree $D$ with coefficients in a Banach space $E$ and $Q_d$ is its homogeneous part of degree $d$, then for any symmetric convex function $\Phi \colon E \to \R_+$ and any $d \in \{0,1, \ldots, D\}$,
\begin{displaymath}
\E\Phi(Q_d(X)) \le \E\Phi(C_D Q(X)). %\;\textrm{for}\;d=0,1,\ldots,D.
\end{displaymath}
\end{twr}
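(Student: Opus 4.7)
The plan is to reduce the statement to tetrahedral polynomials in Rademacher variables and then isolate the $d$-homogeneous component via Vandermonde interpolation applied to the Bonami--Beckner semigroup. Because each $X_i$ is symmetric we have $(X_i)_{i\le n}\stackrel{d}{=}(\varepsilon_i|X_i|)_{i\le n}$ with $\varepsilon_1,\ldots,\varepsilon_n$ i.i.d.\ Rademacher variables independent of $|X|$; conditionally on $|X|$ the tetrahedrality of $Q$ gives
\[
P(\varepsilon):=Q(\varepsilon\cdot|X|)=\sum_{|S|\le D}b_S\prod_{i\in S}\varepsilon_i,\qquad b_S=a_S\prod_{i\in S}|X_i|,
\]
a tetrahedral (multilinear) polynomial in $\varepsilon$ whose homogeneous component of degree $d$ is exactly $Q_d(\varepsilon\cdot|X|)$. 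It therefore suffices to prove the inequality when $X$ is a Rademacher vector.

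For $t\in[-1,1]$ introduce the noise operator
\[
T_tP(\varepsilon):=\sum_S t^{|S|}b_S\prod_{i\in S}\varepsilon_i=\sum_{d'=0}^D t^{d'}P_{d'}(\varepsilon).
\]
A direct computation shows $T_tP(\varepsilon)=\E[P(\xi\cdot\varepsilon)\mid\varepsilon]$, where the $\xi_i$ are independent $\pm1$ random variables with $\E\xi_i=t$, so in particular $\xi\cdot\varepsilon\stackrel{d}{=}\varepsilon$. Picking any $D+1$ distinct nodes $t_0,\ldots,t_D\in[-1,1]$ and inverting the resulting Vandermonde system produces scalars $\alpha_0,\ldots,\alpha_D$ depending only on $d$ and $D$, with $\max_k|\alpha_k|\le C(D)$, satisfying
\[
P_d=\sum_{k=0}^D\alpha_k\,T_{t_k}P.
\]

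The final step combines convexity and contraction. Using convexity of $\Phi$ with uniform weights $1/(D+1)$ and then the symmetry of $\Phi$ to drop the signs of $\alpha_k$ gives the pointwise bound $\Phi(P_d)\le(D+1)^{-1}\sum_{k=0}^D\Phi(C(D)T_{t_k}P)$. The Markov representation of $T_{t_k}$ and Jensen's inequality applied to the convex symmetric map $y\mapsto\Phi(C(D)y)$ yield $\E\Phi(C(D)T_{t_k}P)\le\E\Phi(C(D)P)$ for every $k$, so that $\E\Phi(Q_d(X))=\E\Phi(P_d)\le\E\Phi(C_DQ(X))$ with $C_D:=C(D)$. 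The main obstacle, and also the essence of the argument, is contractivity of $T_t$ on every symmetric convex functional: Vandermonde interpolation alone would leave inside $\Phi$ a linear combination of noise-operated copies of $P$, and only the conditional-expectation representation of $T_t$ allows one to absorb each such copy back into $P$ at the price of a constant depending only on $D$. Tetrahedrality is crucial both for the reduction step (it ensures $P(\varepsilon)$ is multilinear in $\varepsilon$ so its degree-$d$ part equals $Q_d(\varepsilon|X|)$) and for the Vandermonde representation (whose size is governed by $D$ rather than by the dimension).
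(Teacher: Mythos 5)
Your proof is correct. The paper does not prove this statement itself --- it is quoted as a known result of Kwapie\'n (Lemma~2 of the cited decoupling paper) --- and your argument (reduction to Rademacher signs conditionally on $|X|$, the representation $T_tP(\varepsilon)=\E[P(\xi\cdot\varepsilon)\mid\varepsilon]$ with biased signs so that $T_t$ is a conditional expectation of an identically distributed copy, Vandermonde extraction of the degree-$d$ part, and Jensen plus the fact that symmetry and convexity give $\Phi(\lambda y)\le\Phi(y)$ for $|\lambda|\le 1$) is essentially Kwapie\'n's original proof.
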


It will be convenient to have the polynomial $f$ represented as a combination of multivariate Hermite polynomials:
\begin{equation}\label{eq:f-as-Hermite}
  f(x_1, \ldots, x_n) = \sum_{d=0}^D \sum_{\rd \in \Delta_d^n} a_\rd h_{d_1}(x_1) \cdots h_{d_n}(x_n),
\end{equation}
where
\[
  \Delta_d^n = \{ \rd = (d_1, \ldots, d_n) \colon \forall_{k \in [n]}\ d_k \ge 0 \text{ and } d_1 + \cdots + d_n = d \}
\]
and $h_m(x) = (-1)^m e^{x^2/2} \frac{d^m}{dx^m} e^{-x^2/2}$ is the $m$-th Hermite polynomial. Recall that Hermite polynomials are orthogonal with respect to the standard Gaussian measure, in particular if $g$ i a standard Gaussian variable, then for $m \ge 1$, $\E h_m(g) = 0$ (we will use this property several times without explicitly referring to it).

In what follows, we will use the following notation. For a set $I$, by $I\uu{k}$ we will denote the set of all one-to-one sequences of length $k$ with values in $I$.
For an $F$-valued $d$-indexed matrix $A = (a_{i_1,\ldots,i_d})_{i_1,\ldots,i_d\le n}$ and $x \in \R^{n^d} \simeq (\R^n)^{\otimes d}$ we will denote
\begin{displaymath}
  \langle A,x\rangle = \sum_{i_1,\ldots,i_d} a_{i_1,\ldots,i_d} x_{i_1,\ldots,i_d}.
\end{displaymath}

Let $(W_t)_{t \in [0,1]}$ be a standard Brownian motion. Consider standard Gaussian random variables $g = W_1$ and, for any positive integer $N$,
\[
  g_{j,N} = \sqrt{N} (W_{\frac{j}{N}} - W_{\frac{j-1}{N}}), \quad j = 1, \ldots, N.
\]
For any $d \ge 0$, we have the following representation of $h_d(g) = h_d(W_1)$ as a multiple stochastic integral
(see~\cite[Example 7.12 and Theorem 3.21]{JansonGHS}),
\[
  h_d(g) = d! \int_0^1 \! \int_0^{t_d} \! \cdots \! \int_0^{t_2} \, dW_{t_1} \cdots dW_{t_{d-1}} dW_{t_d}.
\]
Approximating the multiple stochastic integral leads to
\begin{equation}\label{eq:Hermite-as-tetrahedral-polynomial}
  \begin{split}
  h_d(g) &= d! \lim_{N \to \infty} N^{-d/2} \sum_{1 \le j_1 < \cdots < j_d \le N} g_{j_1, N} \cdots g_{j_d, N} \\[1ex]
         &= \lim_{N \to \infty} N^{-d/2} \sum_{j \in [N]\uu{d}} g_{j_1, N} \cdots g_{j_d, N},
\end{split}
\end{equation}
where the limit is in $L^2(\Omega)$ (see \cite[Theorem 7.3. and formula (7.9)]{JansonGHS}) and actually the convergence holds in any $L^p$ (see~\cite[Theorem 3.50]{JansonGHS}).

Now, consider $n$ independent copies $(W_t\ub{i})_{t \in [0,1]}$ of the Brownian motion ($i=1, \ldots, n$) together with the corresponding Gaussian random variables: $g\ub{i} = W_1\ub{i}$ and, for $N \ge 1$,
\[
  g_{j, N}\ub{i} = \sqrt{N} (W_{\frac{j}{N}}\ub{i} - W_{\frac{j-1}{N}}\ub{i}), \quad j = 1, \ldots, N.
\]
Let also
\[
  G\ub{n, N} = (g_{1,N}\ub{1}, \ldots, g_{N,N}\ub{1}, \ g_{1,N}\ub{2}, \ldots, g_{N,N}\ub{2}, \ \ldots,\  g_{1,N}\ub{n}, \ldots, g_{N,N}\ub{n}) = (g_{j,N}\ub{i})_{(i,j)\in [n]\times [N]}
\]
be a Gaussian vector with $n \times N$ coordinates. We identify here the set $[nN]$ with $[n]\times [N]$ via the bijection $(i,j) \leftrightarrow (i-1)N+j$. We will also identify the sets $([n]\times [N])^d$ and $[n]^d\times [N]^d$ in a natural way. For $d \ge 0$ and $\rd \in \Delta_d^n$, let
\[
  I_{\rd} = \big\{ i \in [n]^d \colon \forall_{l \in [n]} \ \# i^{-1}(\{l\}) = d_l \big\},
\]
and define a $d$-indexed matrix $B_{\rd}\ub{N}$ of $n^d$ blocks each of size $N^d$ as follows: for $i \in [n]^d$ and $j \in [N]^d$,
\[
  \big(B_{\rd}\ub{N}\big)_{(i, j)} = \begin{cases}
    \frac{d_1! \cdots d_n!}{d!} N^{-d/2} & \text{if $i \in I_{\rd}$ and $(i, j) := \big((i_1, j_1), \ldots, (i_d, j_d)\big) \in ([n] \times [N])\uu{d},$}
    \\[1ex]
    0 & \text{otherwise.}
  \end{cases}
\]

\begin{proof}[Proof of Proposition \ref{prop:splitting-derivatives}]
Assume that $f$ is of the form~\eqref{eq:f-as-Hermite},
By \cite[Lemma 4.3]{AW}, for any $p > 0$,

\[
  \big\langle B_{\rd}\ub{N}, (G\ub{n,N})^{\otimes d} \big\rangle \stackrel{N \to \infty}{\longrightarrow} h_{d_1}(g\ub{1}) \cdots h_{d_n}(g\ub{n}) \quad \text{in $L^p(\Omega)$},
\]
which together with the triangle inequality implies that
\[
  \lim_{N \to \infty} \Big\|\sum_{d=1}^D \Big\langle \sum_{\rd \in \Delta_d^n} a_\rd B_\rd\ub{N}, \big(G\ub{n,N}\big)^{\otimes d} \Big\rangle \Big\|_p = \big\|f(G) - \E f(G)\big\|_p
\]
for any $p > 0$, where $G = (g\ub{1}, \ldots, g\ub{n} )$ and we interpret multiplication of an element  of $F$ and a real valued $d$ indexed matrix in a natural way.
Thus, by Theorem \ref{thm_Kwapien} and the triangle inequality we obtain
\begin{multline*}
C_D^{-1}  {\lim}_{N\to \infty} \sum_{d=1}^D \Big\| \Big\langle \sum_{\rd \in \Delta_d^n} a_\rd B_\rd\ub{N}, \big(G\ub{n,N}\big)^{\otimes d} \Big\rangle \Big\|_p\\
  \le \|f(G)-\E f(G)\|_p \\
  \le  {\lim}_{N\to \infty}  \sum_{d=1}^D \Big\| \Big\langle \sum_{\rd \in \Delta_d^n} a_\rd B_\rd\ub{N}, \big(G\ub{n,N}\big)^{\otimes d} \Big\rangle \Big\|_p
\end{multline*}
(recall that the matrices $B_\rd\ub{N}$ have zeros on generalized diagonals and so do their linear combinations).

Denote by $G\ub{n,N,1}, \ldots, G\ub{n,N,d}$ independent copies of $G\ub{n,N}$.

%\todo[inline]{RA: Powinnismy chyba dopisac jakis decoupling w dodatku, nie wiem tylko w jakiej ogolnosci, najprosciej oczywiscie przekleic z pracy o Rademacherach cale sformulowanie dla $U$-statystyk, ale moze to jednak za duzo

%RA: dopisalem decopling dla momentow i form wieloliniowych, dajac odnosnik do uogolnien (wielomiany tetrahedralne i U-statystyki)}
By decoupling inequalities of Theorem \ref{thm:decoupling} we have
\begin{align}\label{eq:deoupling-the-blown-up-thing}
\Big\| \Big\langle \sum_{\rd \in \Delta_d^n} a_\rd B_\rd\ub{N}, \big(G\ub{n,N}\big)^{\otimes d} \Big\rangle \Big\|_p \sim^d \Big\| \Big\langle \sum_{\rd \in \Delta_d^n} a_\rd B_\rd\ub{N}, G\ub{n,N,1}\otimes \cdots \otimes  G\ub{n,N,d} \Big\rangle \Big\|_p.
\end{align}

To finish the proof it is therefore enough to show that for any $d\le D$,
\begin{multline}\label{eq:description-via-diff}
  \lim_{N \to \infty} \Big\| \Big\langle \sum_{\rd \in \Delta_d^n} a_\rd B_\rd\ub{N}, G\ub{n,N,1}\otimes \cdots \otimes  G\ub{n,N,d} \Big\rangle \Big\|_p
  =   \frac{1}{d!} \| \langle A_d,G_1\otimes\cdots\otimes G_d\rangle\|_p,
\end{multline}
where $G_1,\ldots,G_D$ are independent copies of $G$.

Fix $d \ge 1$. For any $\rd \in \Delta_d^n$ define a symmetric $d$-indexed matrix $(b_\rd)_{i \in [n]^d}$ as
\[
  (b_\rd)_i = \begin{cases}
    \frac{d_1! \cdots d_n!}{d!} & \text{if $i \in I_\rd,$} \\
    0 & \text{otherwise.}
  \end{cases}
\]
and a symmetric $d$-indexed matrix $(\tilde{B}_\rd\ub{N})_{(i, j) \in ([n] \times [N])^d}$ as
\[
  (\tilde{B}_\rd\ub{N})_{(i, j)} = N^{-d/2} (b_\rd)_i \quad \text{for all $i \in [n]^d$ and $j \in [N]^d.$}
\]
Using the convolution properties of Gaussian distributions one easily obtains
\begin{equation}\label{eq:blown-matrices}
  \Big\|\Big\langle  \sum_{\rd \in \Delta_d^n} a_\rd \tilde{B}_\rd\ub{N},  G\ub{n,N,1}\otimes \cdots \otimes  G\ub{n,N,d} \rangle \Big\|_p = \Big\| \Big\langle \sum_{\rd \in \Delta_d^n} a_\rd  (b_\rd)_{i \in [n]^d}, G_1\otimes \cdots \otimes G_d\Big\rangle \Big\|_p
\end{equation}

On the other hand, for any $\rd \in \Delta_d^n$, the matrices $\tilde{B}_\rd\ub{N}$ and $B_\rd\ub{N}$ differ at no more than $|I_\rd | \cdot |([N]^d \setminus [N]\uu{d})|$ entries. Thus
\begin{align*}
  & \Big\| a_\rd \Big\langle \tilde{B}_\rd\ub{N} - B_\rd\ub{N},G\ub{n,N,1}\otimes \cdots \otimes  G\ub{n,N,d}\Big \rangle \Big\|_p \\
  & \le C(d)p^{\frac{d}{2}} \|a_\rd\| \cdot \Big\|\Big\langle \tilde{B}_\rd\ub{N} - B_\rd\ub{N},G\ub{n,N,1}\otimes \cdots \otimes  G\ub{n,N,d}\Big \rangle \Big\|_2 \\
  &\le  p^{\frac{d}{2}} \|a_\rd\|\cdot \sqrt{ |I_\rd| \Big(\frac{d_1! \cdots d_n!}{d!}\Big)^2 N^{-d} \Big(N^d -\frac{N!}{(N-d)!}\Big) }\longrightarrow 0
\end{align*}
as $N \to \infty$, where in the first inequality we used Theorem \ref{gaushiper}.

Together with the triangle inequality and \eqref{eq:blown-matrices} this gives
\begin{equation}\label{eq:B-and-b}
\lim_{N \to \infty} \Big\| \Big\langle \sum_{\rd \in \Delta_d^n} a_\rd B_\rd\ub{N}, G\ub{n,N,1}\otimes \cdots \otimes  G\ub{n,N,d} \Big\rangle \Big\|_p
=  \Big\| \Big\langle \sum_{\rd \in \Delta_d^n} a_\rd  (b_\rd)_{i \in [n]^d}, G_1\otimes \cdots \otimes G_d\Big\rangle \Big\|_p.
\end{equation}

Finally, we have
\begin{equation}\label{eq:Df-and-b}
  \E \D^d f(G) = d! \sum_{\rd \in \Delta_d^n} a_\rd  (b_\rd)_{i \in [n]^d}.
\end{equation}
Indeed, using the identity on Hermite polynomials, $\frac{d}{dx} h_k(x) = k h_{k-1}(x)$ ($k \ge 1$), we obtain $\E \frac{d^l}{dx^l} h_k(g) = k! \textbf{1}_{k=l}$ for $k,l\ge 0$,
%, where $f\ub{l}$ stands for the $l$-th derivative of $f$,
and thus, for any $d, l \le D$ and $\rd \in \Delta_l^n$,
\[
  \big(\E \D^d h_{d_1}(g\ub{1}) \cdots h_{d_n}(g\ub{n})\big)_i = d! (b_\rd)_i \textbf{1}_{d=l} \quad \text{for each $i \in [n]^d$}.
\]
Now \eqref{eq:Df-and-b} follows by linearity. Combining it with~\eqref{eq:B-and-b} yields~\eqref{eq:description-via-diff} and ends the proof.
\end{proof}

\subsection{ Proof of a bound for exponential chaoses}
\begin{proof}[Proof of Proposition \ref{wyk}]

Lemma \ref{lem95} implies
\begin{align}
\lv \sum_\ii a_\ii \prod_{k=1}^d E_{i_k}^{k} \rv_p \sim^d \lv \sum_{i_1,\ldots,i_{2d}} \hat{a}_{i_1,\ldots,i_{2d}} \prod_{k=1}^{2d} g_{i_k}^{k}  \rv_p, \label{loc1}
\end{align}
where
$$
\hat{a}_{i_1,\ldots,i_{2d}}:=a_{\id}\1_{\{i_1=i_{d+1},\ldots,i_d=i_{2d}\}}.
$$
Let
$\hat{A}=(\hat{a}_{i_1,\ldots,i_{2d}})_{i_1,\ldots,i_{2d}}.$
Theorem \ref{Lq} and \eqref{loc1} yield
\begin{align}
\label{loc2}
\frac{1}{C(d)}q^{1/2- d} \sum_{J\subset [2d]} \sum_{\pp\in \pp([J])} p^{\frac{|\pp|}{2}}\altnor{\hat{A}}^{L_q}
&\leq \lv \sum_\ii a_\ii \prod_{k=1}^d E_{i_k}^{k} \rv_p \\
&\leq C(d) q^{2d-\frac{1}{2}} \sum_{J\subset [2d]} \sum_{\pp\in \pp([J])} p^{\frac{|\pp|}{2}}\altnor{\hat{A}}^{L_q}. \nonumber
\end{align}

We will now express $\sum_{J\subset [2d]} \sum_{\pp\in \pp([J])} p^{\frac{|\pp|}{2}}\altnor{\hat{A}}^{L_q}$ in terms of the matrix $A$. To this end we need to introduce new notation. Consider a finite sequence $\mathcal{M} = (J,I_1,\ldots,I_k)$ of subsets of $[d]$, such that $J\cup I_1\cup\ldots\cup I_k= [d]$, $I_1,\ldots,I_k \neq \emptyset$ and each number $m\in [d]$ belongs to at most two of the sets $J,I_1,\ldots,I_k$. Denote the family of all such sequences by $\mm([d])$. For $\mathcal{M}=(J,I_1,\ldots,I_k)$ set $|\mm| = k+1$ and
$$
\robnn{A}^{L_q}_{\mm}:= \sup \left\{\lv  \sqrt{\sum_{i_{J}} \left(\sum_{i_{[d] \setminus J}} a_\ii \prod_{r=1}^k x^r_{i_{I_r}} \right)^2} \rv_{L_q}  \ \Big{|} \ \forall_{r\leq k} \sum_{i_{I_r}} \left(  x^r_{i_{I_r}} \right)^2 \leq 1\right\},
$$
where we do not exclude that $J=\emptyset$. By a straightforward verification
\begin{align}
\sum_{J\subset [2d]} \sum_{\pp\in \pp([J])} p^{\frac{|\pp|}{2}}\altnor{\hat{A}}^{L_q}\sim^d\sum_{\mm \in \mm([d])} p^{\frac{|\mm|-1}{2}}\robnn{A}^{L_q}_{\mm}. \label{loc22}
\end{align}
To finish the proof it is enough to show that
\begin{equation}
\sum_{\mm \in \mm([d])} p^{\frac{|\mm|-1}{2}}\robnn{A}^{L_q}_{\mm}
\sim^d \sum_{\mm \in \ccc} p^{\frac{|\mm|-1}{2}}\robnn{A}^{L_q}_{\mm},\label{suma}
\end{equation}
where
\begin{align*}
\ccc=\left\{\mm=(J,I_1,\ldots,I_k) \in \mm([d]) \ \Big{|} \ J\cap \left(\bigcup_{l=1}^k I_l\right)=\emptyset,\ \right.
\\
\left. \forall_{l,m \leq k}\   I_m \cap I_l \neq \emptyset \Rightarrow (|I_l|=|I_m|=1,\ I_l=I_m)  \vphantom{\left(\bigcup_{l=1}^k I_l\right)} \right\}.
\end{align*}

Indeed assume that \eqref{suma} holds and choose $\mm=(J,I_1,\ldots,I_k) \in \ccc$.
Consider $I=\{i \ | \ \exists_{l<m\leq k} \ \{i\}=I_l=I_m \}$. Then  $J\cap I=\emptyset$ and we have
\begin{align}
\label{al}
&\left(\robnn{A}^{L_q}_{\mm }\right)^q=\sup \left\{ \int_X \left(\sum_{i_J}  \left(\sum_{i_{J^c}} a_\id  \prod_{l \in I} y^l_{i_l} x^l_{i_l}   \prod_{\substack{l\leq k \\ I_l\cap I = \emptyset}} x^l_{i_{I_l}}  \right)^2  \right)^{q/2}  d\mu(x) \ \Big{|} \right.
\\
&\phantom{aaaaaaaaaaaaaa}\left.   \forall_{1\leq l\leq k} \sum_{i_{I_l}} (x^l_{i_{I_l}})^2\leq 1,\
\forall_{l \in I}\ \sum_{i_l} (y^l_{i_l})^2\leq 1    \right\}\nonumber\\
&=\max_{i_I} \sup \left\{ \int_X \left(\sum_{i_J}  \left(\sum_{i_{J^c\setminus I}} a_\id    \prod_{\substack{l\leq k \\ I_l\cap I = \emptyset}} x^l_{i_{I_l}}  \right)^2  \right)^{q/2}  d\mu(x) \ \Big{|} \  \forall_{\substack{1\leq l\leq k \\ I_l \cap I=\emptyset }}\ \sum_{i_{I_l}} (x^l_{i_{I_l}})^2\leq 1    \right\}\nonumber
\\
&=\max_{i_I}\left(\robn{(a_{i_1,\ldots,i_d})_{i_{I^c}}}^{L_q}_{\{I_l \ : \ I_l\cap I = \emptyset\}}\right)^q=: \max_{i_I}\left(\robn{(a_{i_1,\ldots,i_d})_{i_{I^c}}}^{L_q}_{\pp}\right)^q,\nonumber
\end{align}

where in the second equality we used the fact that $(y^l_{i_l} x^l_{i_l} )_{i_l}\in B^n_1$ together with convexity and homogeneity of the norm
\begin{equation}
\|(f_{i_J})_{i_J}\|_{L_q(\ell_2)} = \left(\int_X \left(\sum_{i_J} f_{i_J}^2\right)^{q/2}\right)^{1/q}. \nonumber
\end{equation}

%\end{displaymath}
%that for any  $f_1,\ldots,f_n \in L^q(X,d\mu)$ the function
%$$[0,+\infty)^n \ni v \rightarrow \int_X \left( \sum_i f^2_i(x) v_i\right)^{q/2} d\mu(x)  $$
%is convex (we recall that $q\geq 2$). Since $(|\mm|-1)/2=|I|+|\pp|/2$, \eqref{al} implies
%$$\sum_{\mm \in \ccc} p^{\frac{|\mm|-1}{2}}\robnn{A}^{L_q}_{\mm}= \sum_{J \subset [d], I \subset [d]\setminus J} \sum_{\pp \in \pp([d]\setminus (I\cup J))} p^{|I|+|\pp|/2} %\max_{i_{I}} \robn{(a_{\id})_{i_{I^c}}}^{L^q}_{\pp}.    $$
By combining the above with \eqref{loc2}-\eqref{suma} we conclude the assertion of the proposition.

The proof is completed by showing that
$$\sum_{\mm \in \mm([d])} p^{\frac{|\mm|-1}{2}}\robnn{A}^{L_q}_{\mm}\leq C(d) \sum_{\mm \in \ccc} p^{\frac{|\mm|-1}{2}}\robnn{A}^{L_q}_{\mm} $$
 (the second inequality in \eqref{suma} is trivial), which will be done in two steps. Let us fix $\mm=(J,I_1,\ldots,I_k) \in \mm([d])$.

\begin{enumerate}
\item Assume first that $J\cap (\bigcup_{i=1}^k I_i ) \neq \emptyset$.
Without loss of generality  we can assume that $1 \in J\cap I_1$. Denote $\hat{I}_1=I_1\setminus \{1\}$ and for any matrix
$(x_{i_I}^1)_{i_I}$ such that $\sum_{i_{I_1}} (x_{i_{I_1}}^1)^2\le 1$, set
$(b_{i_1})_{i_1}:=(\sqrt{\sum_{i_{I_1\setminus \{1\}}} (x^1_{i_{I_1}})^2})_{i_1}$. Clearly,
$$
\left( \left(b_{i_1} \right)^2 \right)_{i_1} \in B^n_1\quad \textrm{ and }\quad \sum_{i_{I_1 \setminus \{1\}}} \left(\frac{x^1_{i_{I_1}}}{b_{i_1}} \right)^2 \leq 1.
$$
 Observe that  for any  $f_1,\ldots,f_n \in L^q(X,d\mu)$ the function
$$
[0,+\infty)^n \ni v \rightarrow \int_X \left( \sum_i f^2_i(x) v_i\right)^{q/2} d\mu(x)
$$
is convex (recall that $q\geq 2$). Therefore, we have
\begin{align*}
&\left(\robnn{A}^{L_q}_{\mm}\right)^{q}=  \sup \left\{ \int_X \left(\sum_{i_J} \left(b_{i_1}\right)^2 \left(\sum_{i_{J^c}} a_\id \frac{x^1_{i_{I_1}}}{b_{i_1}}    \prod_{l=2}^k x^l_{i_{I_l}}  \right)^2  \right)^{q/2}  d\mu(x) \ \Big{|} \right.
\\
&\phantom{aaaaaaaaaaaaaaaa}\left. \  \forall_{1\leq l\leq k} \sum_{i_{I_l}} (x^l_{i_{I_l}})^2\leq 1    \right\} \\
&\le\max_{i_1}\sup \left\{  \int_X \left(\sum_{i_{J\setminus\{1\}}} \left(\sum_{i_{J^c}} a_\id \frac{x^1_{i_{I_1}}}{b_{i_1}}    \prod_{l=2}^k x^l_{i_{I_l}}  \right)^2  \right)^{q/2}  d\mu(x) \ \Big{|} \right.
\\
&\phantom{aaaaaaaaaaaaaaaa}\left. \  \forall_{1\leq l\leq k} \sum_{i_{I_l}} (x^l_{i_{I_l}})^2\leq 1    \right\} \\
%&\phantom{aaaaaaaaaaaaaaaa}\left. \  \forall_{1\leq l\leq k} \sum_{i_{I_l}} (x^l_{i_{I_l}})^2\leq 1    \right\} \\
%&\leq \sup \left\{ \int_X \left(\sum_{i_J} b^2_{i_1} \left(\sum_{i_{J^c}} a_\id y_{i_{\hat{I}_1} }   \prod_{l=2}^k x^l_{i_{I_l}}  \right)^2  \right)^{q/2}  d\mu(x) \ \mid %\right.\\
%&\phantom{aaaaaaaaaaaaaaaa} \left. \sum_{i_{\hat{I}_1}} (y_{i_{\hat{I}_1}})^2\leq 1,\ \forall_{1\leq l\leq k} \sum_{i_{I_l}} (x^l_{i_{I_l}})^2\leq 1    \right\}
&\leq \max_{i_1} \sup \left\{ \int_X \left(\sum_{i_{J\setminus \{1\}}} \left(\sum_{i_{J^c}} a_\id y_{i_{\hat{I}_1} }   \prod_{l=2}^k x^l_{i_{I_l}}  \right)^2  \right)^{q/2}  d\mu(x) \ \Big{|} \right.
\\
&\phantom{aaaaaaaaaaaaaaaa}\left. \sum_{i_{\hat{I}_1}} (y_{i_{\hat{I}_1}})^2\leq 1,\ \forall_{1\leq l\leq k} \sum_{i_{I_l}} (x^l_{i_{I_l}})^2\leq 1    \right\}.
\end{align*}
If $\hat{I}_1\neq \emptyset$ let $\mathcal{M}' = (J\setminus\{1\},\{1\},\{1\},\hat{I}_1,I_2,\ldots,I_k)$, otherwise
set $\mathcal{M}' = (J\setminus\{1\},\{1\},\{1\},I_2,\ldots,I_k)$.

By the same argument as was used for the second equality in \eqref{al} we obtain that the right-hand side above equals $\robnn{A}^{L_q}_{\mm '}$, which gives

\begin{displaymath}
\robnn{A}^{L_q}_{\mm}\le \robnn{A}^{L_q}_{\mm '}.
\end{displaymath}

%Since the proper function is convex (cf. the earlier part of the proof) we have
%\begin{align*}
%&\left(\robnn{A}^{L_q}_{\mm}\right)^{q}\\
%&\leq \max_{i_1} \sup \left\{ \int_X \left(\sum_{i_{J\setminus \{1\}}} \left(\sum_{i_{J^c}} a_\id y_{i_{\hat{I}_1} }   \prod_{l=2}^k x^k_{i_{I_k}}  \right)^2  \right)^{q/2}  %d\mu(x) \ \mid \ \sum_{i_{\hat{I}_1}} (y_{i_{\hat{I}_1}})^2\leq 1,\ \forall_{1\leq l\leq k} \sum_{i_{I_l}} (x^l_{i_{I_l}})^2\leq 1    \right\} \\
%&= \left(\robnn{A}^{L_q}_{(J\setminus \{1\},\{1\},\{1\},I_1\setminus \{1\},I_2,\ldots,I_k)}\right)^{q}   =: \left(\robnn{A}^{L_q}_{\mm '}\right)^{q},
%\end{align*}
Observe that
$$
p^{(|\mm|-1)/2}\robnn{A}^{L_q}_{\mm} \leq p^{(|\mm|-1)/2}\robnn{A}^{L_q}_{\mm '}
\leq p^{(|\mm '|-1)/2}\robnn{A}^{L_q}_{\mm '}.
$$
By iterating this argument we obtain that $p^{(|\mm|-1)/2}\robnn{A}^{L_q}_{\mm}\le p^{(|\mm''|-1)/2}\robnn{A}^{L_q}_{\mm''}$ for some $\mm'' = (J'',I_1'',\ldots,I_m'')$ such that $J''\cap (\bigcup_{l=1}^m I_l'')=\emptyset$.

\item Assume that for some $l,m\leq k$ $I_l \cap I_m \neq \emptyset$ and $|I_l| \geq 2$ or $|I_m|\geq 2$. \\
Without loss of the generality $1 \in I_1 \cap I_2$ and $|I_1|\geq 2$. Clearly,
\begin{align*}
&\left(\robnn{A}^{L_q}_{\mm}\right)^{q}=  \sup \left\{ \int_X \left(\sum_{i_J} \left(\sum_{i_{J^c}} a_\id b_{i_1}c_{i_1}  \frac{x^1_{i_{I_1}}}{b_{i_1}} \frac{x^2_{i_{I_2}}}{c_{i_1}}  \prod_{l=3}^k x^l_{i_{I_l}}  \right)^2  \right)^{q/2}  d\mu(x) \ \Big{|} \right.
\\
&\phantom{aaaaaaaaaaaaaaaaaaaaaaa} \left.  \forall_{1\leq l\leq k} \sum_{i_{I_l}} (x^l_{i_{I_l}})^2\leq 1    \right\},
\end{align*}
where  $(b_{i_1})_{i_1}:=(\sqrt{\sum_{i_{I_1\setminus \{1\}}} (x^1_{i_{I_1}})^2})_{i_1},\ (c_{i_1})_{i_1}:=(\sqrt{\sum_{i_{I_2\setminus \{1\}}} (x^2_{i_{I_2}})^2})_{i_1} \in B^n_2$.
Since  $(b_{i_1}c_{i_1})_{i_1}\in B_1^n$,
$$
\forall_{i_1}\sum_{i_{I_1\setminus \{1\}}}  \left(\frac{x^1_{i_{I_1}}}{b_{i_1}} \right)^2 \leq 1,\
\sum_{i_{I_2\setminus \{1\}}}  \left(\frac{x^2_{i_{I_2}}}{ c_{i_1}} \right)^2 \leq 1,
$$
and for any $(f_{ij})_{ij}$ in $L^q(X,d\mu)$, the function
$$
\R^n \ni v \rightarrow \int_X \left( \sum_i \left(\sum_j v_j f_{ij}(x) \right)^2 \right)^{q/2} d\mu(x)
$$
is convex, we obtain similarly as in step 1,
$$
p^{(|\mm|-1)/2}\robnn{A}^{L_q}_{\mm}\leq p^{(|\mm|-1)/2}\ \robnn{A}^{L_q}_{\mm '}
\leq p^{(|\mm '|-1)/2}\ \robnn{A}^{L_q}_{\mm '}
$$
where $\mm' :=(J,\{1\},\{1\},I_1\setminus\{1\},I_2 \setminus \{1\},I_3,\ldots,I_k)$ if $I_2\setminus\{1\}\neq \emptyset$ and $\mm' :=(J,\{1\},\{1\},I_1\setminus\{1\},I_3,\ldots,I_k)$ otherwise.
An iteration of this argument shows that indeed one can assume that $\mathcal{M}$ satisfies the implication $I_m \cap I_l \neq \emptyset \Rightarrow (|I_l|=|I_m|=1,\ I_l=I_m)$.
\end{enumerate}

Combining Steps $1$ and $2$ we obtain that for any $\mm \in \mm([d])$ there exists $\mm ' \in \ccc$ such that   $p^{(|\mm|-1)/2}\robnn{A}^{L_q}_{\mm} \leq p^{(|\mm '|-1)/2}\ \robnn{A}^{L_q}_{\mm '}$ which yields \eqref{suma}.
\end{proof}

\appendix

\section{}
In this section we gather technical facts that are used in the proof.

\begin{twr}[Hypercontractivity of Gaussian chaoses]\label{gaushiper}
Let
$$S=a+\sum_{i_1} a_{i_1} g_{i_1}+\sum_{i_1,i_2} a_{i_1,i_2} g_{i_1} g_{i_2}+\ldots+\sum_{i_1,\ldots,i_d} a_{i_1,\ldots,i_d} g_{i_1} \cdots g_{i_d},$$
be a non-homogeneous Gaussian chaos of order $d$ with values in a Banach space $(F,\lv \cdot \rv)$. Then for any $1\leq p<q<\infty$, we have
$$\left( \E \lv S \rv^q \right)^{1/q} \leq C(d) \left(\frac{q}{p} \right)^{d/2} \left(\E \lv S \rv^p \right)^{1/p}.$$
\end{twr}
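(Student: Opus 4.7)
Proof proposal:

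The plan is to combine the Wiener--Hermite chaos decomposition with Borell's hypercontractivity in the Banach-valued setting, and then to cover the case $p<2$ by a Paley--Zygmund argument. I would start by expanding $S$ into its Wiener chaos components $S=\sum_{k=0}^d S_k$, where $S_k$ is the projection of $S$ onto the $k$-th Hermite chaos (well defined in $L^2(F)$ since $S$ has degree at most $d$). The Ornstein--Uhlenbeck semigroup $T_t$ acts on the $k$-th chaos as $T_t S_k=e^{-kt}S_k$. Because $T_t$ is a Markov kernel obtained by integration against the Mehler density, the pointwise bound $\lv T_t f\rv\le T_t(\lv f\rv)$ holds for any $F$-valued $f$, and combining it with the scalar Nelson hypercontractivity applied to $u=\lv f\rv$ (which gives $\E |T_t u|^q\le (\E u^p)^{q/p}$ whenever $e^{-2t}\le (p-1)/(q-1)$) yields the Banach-valued hypercontractivity $\lv T_t f\rv_q\le \lv f\rv_p$ under the same condition on $p,q,t$. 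Taking $f=S_k$ and writing $S_k=e^{kt}T_t S_k$ gives Borell's bound
$$
\lv S_k\rv_q \le \Big(\frac{q-1}{p-1}\Big)^{k/2}\lv S_k\rv_p,\qquad 1<p\le q<\infty.
$$

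Next I would bound $\lv S_k\rv_p$ by $\lv S\rv_p$ using the tetrahedralization procedure from the proof of Proposition \ref{prop:splitting-derivatives}: each Hermite monomial $h_{d_1}(g\ub{1})\cdots h_{d_n}(g\ub{n})$ appearing in $S_k$ can be approximated in $L^p$ by tetrahedral polynomials in a larger family of independent Gaussians via the Brownian motion construction, and then Kwapie\'n's inequality (Theorem \ref{thm_Kwapien}) bounds the $L^p$-norm of the degree-$k$ homogeneous part of the approximating tetrahedral polynomial by $C_d$ times the $L^p$-norm of the full approximation. Passing to the limit yields $\lv S_k\rv_p\le C(d)\lv S\rv_p$. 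Summing the Borell bound over $k=0,\ldots,d$ via the triangle inequality, and using the elementary estimate $(q-1)/(p-1)\le 2q/p$ valid for $p\ge 2$, proves the theorem in the range $p\ge 2$.

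For $1\le p<2$ I would reduce to the case $p=2$ by a Paley--Zygmund argument. Applying the $p\ge 2$ case with $p=2,q=4$ gives $\lv S\rv_4\le C(d)\lv S\rv_2$, and the Paley--Zygmund inequality for the nonnegative variable $\lv S\rv^2$ then yields
$$
\Pro(\lv S\rv\ge \theta\lv S\rv_2)\ge (1-\theta^2)^2 \frac{\lv S\rv_2^4}{\lv S\rv_4^4}\ge c(d)(1-\theta^2)^2.
$$
Integrating this tail bound (or using the elementary inequality $\E X\ge \theta \lv S\rv_2 \Pro(X\ge \theta \lv S\rv_2)$) and optimizing in $\theta\in(0,1)$ gives $\lv S\rv_2\le C(d)\lv S\rv_1\le C(d)\lv S\rv_p$. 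Combined with the inequality from the first step applied with $p=2$ and arbitrary $q$, and the bound $q/p\ge q/2$ for $p\le 2$, this completes the proof. The main technical obstacle is the Kwapie\'n-based step bounding $\lv S_k\rv_p$ by $\lv S\rv_p$; however, this is essentially contained in the proof of Proposition \ref{prop:splitting-derivatives} and only requires careful tetrahedral approximation of the Hermite monomials followed by a direct application of Theorem \ref{thm_Kwapien}.
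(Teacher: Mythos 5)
Your argument is correct, but it is far more self-contained than what the paper does: the paper's entire proof is a citation of \cite[Theorem 3.2.10]{7} (the Banach-valued hypercontractivity estimate $\lv S\rv_q\le ((q-1)/(p-1))^{d/2}\lv S\rv_p$ for $1<p<q$) followed by a H\"older interpolation such as $\lv S\rv_2\le \lv S\rv_1^{1/4}\lv S\rv_3^{3/4}$ to descend below $p=2$. You instead rederive the cited ingredient from scratch: the Mehler-kernel/Jensen argument giving $\lv T_tf\rv\le T_t(\lv f\rv)$ plus Nelson's scalar inequality correctly yields Borell's bound on each chaos component, your estimate $(q-1)/(p-1)\le 2q/p$ for $p\ge2$ is right, and the Paley--Zygmund step is a standard and valid substitute for the three-point H\"older interpolation (modulo the trivial remark that for $q<2$ one just uses monotonicity of moments, which you omit but which costs nothing). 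The one place where your route is heavier than necessary is the bound $\lv S_k\rv_p\le C(d)\lv S\rv_p$: the tetrahedral approximation plus Kwapie\'n's Theorem \ref{thm_Kwapien} does work (it is exactly the mechanism of Proposition \ref{prop:splitting-derivatives}, with the minor bookkeeping that the approximation converges to $S-\E S$ rather than $S$, so the constant term must be handled separately via $\lv\E S\rv\le\lv S\rv_p$), but a shorter argument writes the projection $S_k$ as a finite linear combination $\sum_j c_j T_{t_j}S$ by inverting a Vandermonde system in the eigenvalues $e^{-kt_j}$, using only that each $T_{t_j}$ is an $L^p$-contraction. In summary: the proposal is a valid, essentially complete proof that trades the paper's one-line citation for a longer but self-contained derivation.
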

\begin{proof}
It is an immediate consequence of \cite[Theorem 3.2.10]{7} and H{\"o}lder's inequality.
\end{proof}

\begin{twr}[Sudakov minoration {\cite{Su}}]\label{norsud}
For any set $T\subset \R^n$ and $\eps>0$ we have
$$
\eps \sqrt{\ln N(T,d_2,\eps )} \leq C \E \sup_{t \in T} \sum_i t_i g_i,
$$
where $d_2$ is the Euclidean distance.
\end{twr}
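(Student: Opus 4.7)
The plan is to combine a packing--covering duality with the Sudakov--Fernique Gaussian comparison inequality applied to a well-separated finite subset of $T$. I would first let $M$ denote the maximal cardinality of an $\eps$-separated subset of $T$ with respect to the Euclidean distance $d_2$. Since any such maximal configuration is automatically an $\eps$-net of $T$, one has $M \ge N(T, d_2, \eps)$. Fix an $\eps$-separated family $\{t_1, \ldots, t_M\} \subset T$ realizing $M$. Because $\sup_{t \in T} \sum_i t_i g_i \ge \max_{j \le M} \sum_i (t_j)_i g_i$ pointwise, it will suffice to bound the expected maximum over this finite set from below by a quantity of order $\eps \sqrt{\log M}$.

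Next I would introduce the centered Gaussian process $X_t := \sum_i t_i g_i$, which satisfies $\E(X_s - X_t)^2 = \lv s - t \rv_2^2$, so that $\E(X_{t_i} - X_{t_j})^2 \ge \eps^2$ whenever $i \ne j$. Let $\gamma_1, \ldots, \gamma_M$ be i.i.d.\ standard Gaussians and set $Y_j := (\eps/\sqrt{2})\gamma_j$; then $\E(Y_i - Y_j)^2 = \eps^2$ for $i \ne j$, matching the lower bound on the increments of $X$. The Sudakov--Fernique comparison inequality (the process with the larger pairwise increment variance has the larger expected maximum) will then yield
\[
\E \max_{j \le M} X_{t_j} \;\ge\; \E \max_{j \le M} Y_j \;=\; \frac{\eps}{\sqrt{2}}\, \E \max_{j \le M} \gamma_j.
\]

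Finally, I would invoke the classical estimate $\E \max_{j \le M} \gamma_j \ge c\sqrt{\log M}$ for $M \ge 2$, which follows from the identity $\Pro(\max_j \gamma_j \le t) = \Pro(\gamma_1 \le t)^M$ by choosing $t$ of order $\sqrt{\log M}$ and using the standard Gaussian tail. Combining the three steps produces $\E \sup_{t \in T} \sum_i t_i g_i \ge c' \eps \sqrt{\log N(T, d_2, \eps)}$ for a universal $c'$, which is the stated inequality. The corner case $N(T, d_2, \eps) \le 1$ is trivial since then the left-hand side vanishes. The only genuinely nontrivial ingredient is the Sudakov--Fernique comparison lemma, but that is a standard consequence of Gaussian interpolation; everything else reduces to elementary packing and one-dimensional Gaussian tail estimates.
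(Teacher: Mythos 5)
The paper does not prove this statement at all: it is quoted as a classical fact from Sudakov's paper \cite{Su} and placed in the appendix as a black box, so there is no internal argument to compare against. Your proposal is the standard textbook proof and it is correct: take a maximal $\eps$-separated subset $\{t_1,\dots,t_M\}$, note $M$ dominates the covering number, minorize the Gaussian process $X_t=\sum_i t_ig_i$ over this finite set by the independent comparison process $Y_j=(\eps/\sqrt{2})\gamma_j$ via Sudakov--Fernique (the increments of $X$ dominate those of $Y$ by separation), and finish with $\E\max_{j\le M}\gamma_j\ge c\sqrt{\log M}$. Each step is sound, including the degenerate case $N\le 1$ where the left-hand side is nonpositive while the right-hand side is nonnegative. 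The only bookkeeping point worth flagging is that this paper defines $N(S,\rho,\eps)$ via balls of \emph{diameter} $\eps$ rather than radius $\eps$, so a maximal $\eps$-separated set gives $M\ge N(T,d_2,2\eps)$ under that convention; running your argument with $\eps/2$-separation (or simply adjusting $C$) absorbs this factor of $2$ and yields the inequality exactly as stated.
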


\begin{twr}[Dual Sudakov minoration {\cite[formula (3.15)]{15}}]
\label{thm:dualSud}
Let $\alpha$ be a norm on $\R^n$ and $\rho_\alpha(x,y)=\alpha(x-y)$ for $x,y\in \R^n$.
Then
\[
\eps\sqrt{\log N(B_2^n,\rho_\alpha,\eps)}\leq C\E\alpha(G_n)\quad \mbox{for }\eps>0.
\]
\end{twr}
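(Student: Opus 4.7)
The plan is to combine a standard Gaussian packing argument with a Hahn--Banach bound on the $\rho_\alpha$-diameter of $B_2^n$. Let $K:=\{x\in\R^n\colon\alpha(x)\le 1\}$ be the unit $\alpha$-ball, write $M:=\E\alpha(G_n)$, and let $P_\eta$ denote the maximal cardinality of an $\eta$-separated subset of $B_2^n$ in $\rho_\alpha$ (pairwise $\rho_\alpha$-distance strictly greater than $\eta$). A maximal $\eps/2$-separated family is automatically an $\eps/2$-net and so yields a covering of $B_2^n$ by diameter-$\eps$ balls, giving $N(B_2^n,\rho_\alpha,\eps)\le P_{\eps/2}$; it therefore suffices to bound $P_\eta$.

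First I would control the diameter. With $b:=\sup_{\|x\|_2\le 1}\alpha(x)$ one has $\mathrm{diam}_{\rho_\alpha}(B_2^n)\le 2b$. By Hahn--Banach there exists a linear functional $\varphi$ whose $\alpha$-dual norm is at most $1$ and whose Euclidean norm equals $b$; since $|\varphi(G_n)|\le \alpha(G_n)$, one gets $M\ge \E|\varphi(G_n)|=b\sqrt{2/\pi}$, i.e.\ $b\le \sqrt{\pi/2}\,M$. Hence $N(B_2^n,\rho_\alpha,\eps)=1$ as soon as $\eps\ge \sqrt{2\pi}\,M$, and it is enough to treat $\eps\le \sqrt{2\pi}\,M$.

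For the packing bound I would fix a maximal $\eta$-separated family $x_1,\dots,x_P\in B_2^n$, so that the translates $x_i+\tfrac{\eta}{2}K$ are pairwise disjoint, and rescale by a parameter $t>0$ (to be chosen). Applying the symmetric Gaussian translation inequality
\[
\Pro(G_n\in y+A)\ge e^{-\|y\|_2^2/2}\,\Pro(G_n\in A),\qquad A=-A\subset\R^n,
\]
(which follows from $\Pro(G_n\in y+A)=\Pro(G_n\in -y+A)$ combined with the AM-GM bound $\tfrac12\bigl(e^{-\|y+z\|^2/2}+e^{-\|y-z\|^2/2}\bigr)\ge e^{-\|y\|^2/2}e^{-\|z\|^2/2}$) to $y=tx_i$ and $A=\tfrac{t\eta}{2}K$, together with $\|tx_i\|_2\le t$ and disjointness, forces
\[
P\cdot e^{-t^2/2}\,\Pro\!\bigl(\alpha(G_n)\le t\eta/2\bigr)\le 1.
\]
Taking $t=4M/\eta$ and invoking Markov's inequality yields $\Pro(\alpha(G_n)\le 2M)\ge 1/2$, whence $P\le 2\exp(8M^2/\eta^2)$.

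Combining both steps, in the nontrivial range $\eps\le \sqrt{2\pi}\,M$ one has $M/\eps\ge 1/\sqrt{2\pi}$, so the additive constant in $\log N(B_2^n,\rho_\alpha,\eps)\le \log P_{\eps/2}\le \log 2+32(M/\eps)^2$ can be absorbed into the main term, giving $\eps\sqrt{\log N}\le C\,M$. The main subtlety is precisely this endpoint: the packing/translation estimate alone produces an irreducible additive $\log 2$ in $\log N$ that would spoil the target inequality as $\eps$ approaches $\mathrm{diam}_{\rho_\alpha}(B_2^n)$, and the Hahn--Banach comparison $b\le \sqrt{\pi/2}\,M$ is the ingredient that rules out that regime.
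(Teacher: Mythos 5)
Your argument is correct and complete: the reduction of covering to packing, the Hahn--Banach control of the $\rho_\alpha$-diameter of $B_2^n$ via $b\le\sqrt{\pi/2}\,\E\alpha(G_n)$ (which is indeed the step needed to absorb the additive $\log 2$ near the endpoint), and the symmetric Gaussian translation inequality combined with Markov's inequality all check out. The paper offers no proof of this theorem, only a citation to Ledoux--Talagrand, and what you have written is essentially the standard Pajor--Tomczak-Jaegermann argument given there, so your reconstruction matches the cited source rather than diverging from anything in the paper.
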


\begin{lem}\cite[Lemma 3]{Latgaus}\label{rozbij}
Let $(G_t)_{t \in T}$ be a centered Gaussian process and $T = \bigcup_{l=1}^m T_l$, $m \geq 1$. Then
$$
\E \sup_{t \in T} G_t
\leq \max_{l \leq m} \E \sup_{t \in T_l} G_t + C\sqrt{\ln(m)} \sup_{t,t' \in T} \sqrt{\E (G_t-G_{t'})^2}.
$$
\end{lem}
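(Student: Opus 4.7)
The plan is to reduce the assertion to Borell's Gaussian concentration inequality combined with the standard sub-Gaussian maximal bound. First I would normalize the process: fix any $t_0 \in T$ and replace $G_t$ by $G'_t := G_t - G_{t_0}$. This preserves centeredness, the decomposition $T = \bigcup_l T_l$, and the increment standard deviations $\sqrt{\E(G_t - G_{t'})^2}$, while shifting both $\sup_{t \in T}G_t$ and each $\sup_{t \in T_l}G_t$ by $G_{t_0}$, a mean-zero random variable that disappears in expectation. The payoff is that $\sqrt{\E(G'_t)^2} \le \Delta$ for every $t\in T$, where $\Delta := \sup_{t,t'\in T}\sqrt{\E(G_t - G_{t'})^2}$. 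One may also assume, by a standard approximation, that $T$ is finite, so that measurability of suprema is not an issue.

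Setting $Z_l := \sup_{t \in T_l}G_t$ and $m_l := \E Z_l$, subadditivity of the maximum gives
\[
\E\sup_{t\in T}G_t = \E\max_{l\le m}Z_l \le \max_{l\le m}m_l + \E\max_{l\le m}(Z_l - m_l).
\]
To control the second term, I would invoke Borell's inequality: after representing $G'_t = \sum_i a_i(t) g_i$ for i.i.d.\ standard Gaussians $g_i$, the map $g \mapsto \sup_{t\in T_l}\sum_i a_i(t)g_i$ is Lipschitz with constant $\sup_{t\in T_l}\sqrt{\E (G'_t)^2} \le \Delta$. Borell's theorem then yields that $Z_l - m_l$ is sub-Gaussian with variance proxy $\Delta^2$, i.e.\ $\E e^{\lambda(Z_l - m_l)} \le e^{\lambda^2\Delta^2/2}$ for every $\lambda\in\R$.

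It remains to apply the classical maximal inequality for sub-Gaussian variables: from $e^{\lambda\, \E\max_{l}(Z_l - m_l)} \le \E e^{\lambda\max_{l}(Z_l - m_l)} \le \sum_{l\le m}\E e^{\lambda (Z_l-m_l)} \le m e^{\lambda^2\Delta^2/2}$, optimizing in $\lambda>0$ gives $\E\max_{l\le m}(Z_l - m_l) \le \Delta\sqrt{2\ln m}$ (note that no independence of the $Z_l$'s is used). Substituting into the display above yields the announced estimate. The only step requiring a moment of thought is the reduction that produces the uniform bound $\sqrt{\E(G'_t)^2} \le \Delta$, which is what allows Borell's inequality to be applied with the diameter of $T$ rather than with the larger quantity $\sup_t \sqrt{\E G_t^2}$; everything else is routine.
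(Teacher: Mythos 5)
Your argument is correct: the centering at $t_0$ legitimately reduces the variance proxy to the diameter $\Delta$, Borell's (Gaussian concentration) inequality gives the sub-Gaussian moment generating function bound for each $Z_l-m_l$, and the union-type maximal inequality yields $\Delta\sqrt{2\ln m}$, so the lemma holds with $C=\sqrt{2}$. The paper does not reprove this lemma but simply cites \cite[Lemma~3]{Latgaus}, and your proof is essentially the standard concentration argument used there, so there is nothing to object to.
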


\begin{lem}\cite[Lemma 4]{Latgaus}\label{kon}
Let $G$ be a Gaussian variable in a normed space $(F, \lv \ \cdot \  \rv)$. Then for any $p\geq 2$,
$$
\frac{1}{C} \left( \lv G \rv_1+\sqrt{p}\sup_{\substack{\varphi \in F^*\\ \lv \varphi\rv_* \leq 1}} \E | \varphi(G) | \right)
\leq \lv G  \rv_p
\leq \lv G \rv_1+C\sqrt{p} \sup_{\substack{\varphi \in F^*\\ \lv \varphi\rv_* \leq 1}} \E | \varphi(G) |,
$$
where $(F^*,\lv \cdot \rv_*)$ is the dual space to $(F,\lv \cdot \rv)$.
\end{lem}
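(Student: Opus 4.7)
The plan is to combine a standard series representation of $G$ with Gaussian concentration of measure, and to complement this upper estimate with a one-dimensional moment comparison applied to the pairings $\varphi(G)$. First, one may assume (possibly after almost sure approximation) that $G=\sum_i g_i x_i$ for a sequence $(x_i)\subset F$, where $(g_i)$ are i.i.d.\ standard Gaussians. Set
\[
\sigma:=\sup_{\lv\varphi\rv_*\leq 1}\sqrt{\E\varphi(G)^2}.
\]
Since $\varphi(G)$ is a centered real Gaussian, $\E|\varphi(G)|=\sqrt{2/\pi}\,(\E\varphi(G)^2)^{1/2}$, hence $\sup_{\lv\varphi\rv_*\leq 1}\E|\varphi(G)|\sim\sigma$. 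It therefore suffices to prove $\lv G\rv_p\sim \lv G\rv_1+\sqrt{p}\,\sigma$ for $p\geq 2$.

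For the upper bound, note that the map $(g_i)_i\mapsto \lv\sum_i g_i x_i\rv$ is $\sigma$-Lipschitz from $\ell_2$ to $\R$: for any square-summable $(g_i),(h_i)$ the triangle inequality and duality give
\[
\bigl|\lv\textstyle\sum_i g_i x_i\rv-\lv\textstyle\sum_i h_i x_i\rv\bigr|\leq \lv\textstyle\sum_i(g_i-h_i)x_i\rv=\sup_{\lv\varphi\rv_*\leq 1}\sum_i(g_i-h_i)\varphi(x_i),
\]
and Cauchy–Schwarz combined with $\sum_i\varphi(x_i)^2=\E\varphi(G)^2\leq\sigma^2$ bounds the right-hand side by $\sigma\|(g_i-h_i)\|_2$. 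By Gaussian concentration of measure one obtains
\[
\Pro\bigl(|\lv G\rv-\E\lv G\rv|\geq t\bigr)\leq 2\exp(-t^2/(2\sigma^2)),
\]
and integrating the tails in the formula $\lv G\rv_p^p=\int_0^\infty pt^{p-1}\Pro(\lv G\rv\geq t)\,dt$ produces the desired estimate $\lv G\rv_p\leq \lv G\rv_1+C\sqrt{p}\,\sigma$.

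For the lower bound, Jensen's inequality immediately yields $\lv G\rv_p\geq \lv G\rv_1$. Moreover, for any $\varphi\in F^*$ with $\lv\varphi\rv_*\leq 1$ the pointwise inequality $\lv G\rv\geq |\varphi(G)|$ implies
\[
\lv G\rv_p\geq \|\varphi(G)\|_p=\|g\|_p\,(\E\varphi(G)^2)^{1/2}\geq c\sqrt{p}\,(\E\varphi(G)^2)^{1/2},
\]
where $g$ is a standard real Gaussian and $c>0$ is absolute (since $\|g\|_p\geq c\sqrt{p}$ for $p\geq 2$). Taking the supremum over $\varphi$ gives $\lv G\rv_p\geq c\sqrt{p}\,\sigma$; averaging with $\lv G\rv_p\geq \lv G\rv_1$ yields the claimed lower estimate up to an absolute constant.

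No serious obstacle arises—this is the classical combination of Borell–Sudakov–Tsirelson concentration with a one-dimensional moment comparison. The only technical care needed is the passage through the series representation of $G$, which is justified by truncation: for $G_n:=\sum_{i\leq n} g_i x_i$ the Lipschitz constant computation is finite-dimensional and the concentration bound is stable under taking $n\to\infty$, since $\lv G_n\rv\to\lv G\rv$ in every $L^p$.
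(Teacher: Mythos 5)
Your argument is correct and is essentially the standard proof of this fact (the paper itself only cites it as Lemma~4 of \cite{Latgaus}, whose proof is exactly this combination of the Borell--Tsirelson--Ibragimov--Sudakov concentration inequality for the $\sigma$-Lipschitz functional $\|\cdot\|$ with the one-dimensional estimate $\|\varphi(G)\|_p \ge c\sqrt{p}\,(\E\varphi(G)^2)^{1/2}$ and the elementary comparison $\E|\varphi(G)| = \sqrt{2/\pi}\,(\E\varphi(G)^2)^{1/2}$). The only informality is that the tail-integration formula should be applied to the deviation $\|G\|-\E\|G\|$ rather than to $\|G\|$ itself, but this does not affect the conclusion.
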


\begin{twr}\cite[Theorem 1]{Latgaus}\label{momrealgaus}
For any real-valued matrix $(a_{i_1,\ldots,i_d})_{i_1,\ldots,i_d}$ and $p \geq 2$, we have
$$\lv \sum_{\ii} a_{\i} \prod_{j=1}^d g^j_{i_j} \rv_p \sim ^d \sum_{\stackrel{\mathcal{P} \in \mathcal{P}([d])}{\mathcal{P}=(I_1,\ldots,I_k)}} p^{|P|/2} \sup \left\{ \sum_{\ii} a_{\ii} \prod_{j=1}^k x^j_{i_{I_j}} \ \mid \ \lv (x^k_{i_{I_k}})_{i_{I_k}} \rv_2 \leq 1 \right\}.$$
\end{twr}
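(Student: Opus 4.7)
Theorem~\ref{momrealgaus} is Lata\l a's moment bound \cite[Theorem~1]{Latgaus} for scalar-valued tetrahedral Gaussian chaoses. The plan is to prove the two inequalities separately, with induction on $d$ at the heart of both arguments.

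For the lower bound I would fix a partition $\pp=(I_1,\ldots,I_k)\in\pp([d])$ and near-maximizing unit tensors $x^j=(x^j_{i_{I_j}})$. Writing $Y=\sum_\ii a_\ii \prod_{j=1}^d g^j_{i_j}$, the goal is $\|Y\|_p\ge c(d)p^{k/2}|\sum_\ii a_\ii \prod_{j=1}^k x^j_{i_{I_j}}|$. I would group the Gaussian variables $(g^l)_{l\in I_j}$ by blocks of the partition and condition successively: at each stage the conditional object is a Gaussian chaos of lower order whose $L_p$-norm can be controlled via Lemma~\ref{kon} combined with Theorem~\ref{gaushiper} (hypercontractivity), each step gaining a factor $\sqrt{p}$ while losing only a constant depending on $d$. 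After $k$ such steps one is left with the deterministic quantity $|\sum_\ii a_\ii\prod_j x^j_{i_{I_j}}|$; taking suprema over $x^j$ and summing over partitions yields the lower bound.

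For the upper bound I would induct on $d$, with base case $d=1$ given by the classical Gaussian moment formula $\|\sum_i a_i g_i\|_p\sim\sqrt p\,(\sum_i a_i^2)^{1/2}$. For the inductive step, write $Y=\sum_{i_d}g^d_{i_d}T_{i_d}$ with $T_{i_d}=\sum_{i_1,\ldots,i_{d-1}}a_\ii\prod_{j=1}^{d-1}g^j_{i_j}$, and apply Lemma~\ref{kon} conditionally on $(g^1,\ldots,g^{d-1})$ to obtain
\[
\|Y\|_p\le C\,\Bigl\|\sqrt{\textstyle\sum_{i_d}T_{i_d}^2}\,\Bigr\|_p+C\sqrt p\,\sup_{\|x\|_2\le 1}\Bigl\|\sum_{i_d}x_{i_d}T_{i_d}\Bigr\|_p.
\]
The first summand is the $L_p$-norm of a scalar chaos of order $d-1$ whose coefficient matrix is obtained by contracting the index $i_d$ with itself, and the inductive hypothesis applied to it reproduces precisely those partitions of $[d]$ in which $d$ lies in a block of size at least two. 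The second summand, after applying the induction uniformly in $x$, reproduces the partitions in which $\{d\}$ is a singleton block, with the extra factor $\sqrt p$ supplying the additional $p^{1/2}$ inside $p^{|\pp|/2}$.

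The main obstacle is the handling of the supremum over $x\in B_2^n$ in the second summand: one must apply the inductive bound uniformly in $x$ and then recognize the combined suprema (over $x$ and over the test tensors inside $\|A\|_\pp$) as exactly the partition norm corresponding to adjoining $\{d\}$ to some partition $\pp$ of $[d-1]$. In the scalar case this is immediate because each $\|A\|_\pp$ is itself defined as a supremum over unit balls, so absorbing $x_{i_d}$ into the fresh singleton is automatic---it is precisely this feature which breaks down in the Banach-valued setting and motivates the chaining machinery of Sections~\ref{sek3}--\ref{sek4}. Verifying that every partition of $[d]$ arises from exactly one of the two summands, under the natural bijection with pairs (partition of $[d-1]$, placement of $d$), completes the combinatorial bookkeeping.
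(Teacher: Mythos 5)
First, a point of reference: the paper does not prove Theorem \ref{momrealgaus} at all --- it is imported verbatim from \cite[Theorem 1]{Latgaus}, and the entire machinery of Sections \ref{sek3}--\ref{sek4} of the present paper is a Banach-valued generalization of Lata\l{}a's (genuinely difficult) proof of precisely this statement. So the benchmark for your sketch is Lata\l{}a's argument, and your sketch does not reproduce its essential content. Your lower-bound outline is fine: that direction really is obtained by conditioning on the blocks of $\pp$ one at a time, gaining a factor $\sqrt{p}$ per block from the one-dimensional Gaussian moment estimate (plus Jensen/Cauchy--Schwarz), and it is also how the paper deduces the lower bound of Theorem \ref{thm:gl}.

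The gap is in the upper bound. In your inductive step the second summand $\sqrt{p}\,\sup_{\|x\|_2\le 1}\lv\sum_{i_d}x_{i_d}T_{i_d}\rv_p$ is handled correctly: the supremum over $x$ commutes with the deterministic suprema defining the partition norms, so it produces exactly the partitions in which $\{d\}$ is a singleton. But the first summand $\lv(\sum_{i_d}T_{i_d}^2)^{1/2}\rv_p$ is \emph{not} the $L_p$-norm of a scalar chaos of order $d-1$. It is the $L_p$-norm of the Euclidean norm of the $\ell_2^n$-valued chaos $(T_{i_d})_{i_d}$ of order $d-1$ (equivalently, the square root of an undecoupled, non-tetrahedral chaos of order $2(d-1)$), and the induction hypothesis --- a statement about real-valued chaoses --- simply does not apply to it. For $d=2$ this term happens to be the norm of a Gaussian \emph{vector}, so Lemma \ref{kon} closes the argument (this is the classical Hanson--Wright proof); for $d\ge 3$, bounding it by the deterministic quantities $\sup\{\sum_\ii a_\ii\prod_j x^j_{i_{I_j}}\}$ is exactly the open difficulty. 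Relatedly, your closing remark that in the scalar case the supremum-absorption is ``automatic'' and that the chaining machinery is only needed in the Banach-valued setting is backwards: Lata\l{}a's proof must control expected suprema of Gaussian processes over products of Euclidean balls, namely $\E\sup_{(x^2,\ldots,x^d)\in(B_2^n)^{d-1}}\bigl|\sum_\ii a_\ii g_{i_1}\prod_{k\ge 2}x^k_{i_k}\bigr|$, by the norms $\robn{A}_{\pp}$, and this requires the multi-scale partitioning and entropy estimates that Sections \ref{sek3}--\ref{sek4} generalize (Theorem \ref{thm:proc} with $F=\R$); it cannot be obtained from the triangle inequality or from hypercontractivity. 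Without an argument for that term, your induction does not close.
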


\begin{cor}\label{lgaus}
Assume that for any $ i_1,\ldots,i_d,\ a_{i_1,\ldots,i_d} \in \R$. Then for all $p\geq 1$
$$
\frac{1}{C(d)} \sqrt{p}\sqrt{\sum_{i_1,\ldots,i_d} a^2_{i_1,\ldots,i_d}}
\leq \lv \sum_{i_1,\ldots,i_d} a_{i_1,\ldots,i_d} \pg \rv_p
\leq C(d) p^{d/2} \sqrt{\sum_{i_1,\ldots,i_d} a^2_{i_1,\ldots,i_d}}.
$$
\end{cor}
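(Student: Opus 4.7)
The plan is to deduce Corollary \ref{lgaus} as a direct consequence of Theorem \ref{momrealgaus} (stated for $p\ge 2$), by majorizing each of the suprema appearing on the right-hand side by $\sqrt{\sum_\ii a_\ii^2}$ via Cauchy--Schwarz, and then handling the range $1\le p<2$ separately by hypercontractivity.

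First I would observe that for any partition $\mathcal{P}=(I_1,\ldots,I_k)\in \mathcal{P}([d])$, an iterated Cauchy--Schwarz inequality over the blocks $I_1,\ldots,I_k$ yields
\[
\sup\Big\{\sum_{\ii} a_\ii \prod_{j=1}^k x^j_{i_{I_j}}\ \Big|\ \sum_{i_{I_j}} (x^j_{i_{I_j}})^2 \le 1\text{ for every }j\Big\}\ \le\ \sqrt{\sum_\ii a_\ii^2},
\]
with equality when $k=1$ (attained by $x^1 = a/\|a\|_2$, viewing $a$ as a vector indexed by $\ii$). Plugging this estimate into Theorem \ref{momrealgaus} and using that $|\mathcal{P}|\le d$ for every $\mathcal{P}\in \mathcal{P}([d])$ and $p\ge 1$, so that $p^{|\mathcal{P}|/2}\le p^{d/2}$, while the total number of partitions of $[d]$ depends only on $d$, one obtains the upper bound for $p\ge 2$. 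The matching lower bound for $p\ge 2$ follows by keeping only the term corresponding to the single-block partition $\{[d]\}$ in Theorem \ref{momrealgaus}: it contributes $\sqrt{p}\,\sqrt{\sum_\ii a_\ii^2}$.

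To extend to $1\le p<2$, I would use the identity $\|S\|_2^2 = \sum_\ii a_\ii^2$, which is immediate from the orthogonality of the monomials $g^1_{i_1}\cdots g^d_{i_d}$ in $L^2$ (independence across the decoupled coordinates reduces the computation to a product). The upper bound is then trivial: for $p\in[1,2)$,
\[
\|S\|_p\le \|S\|_2=\sqrt{\sum_\ii a_\ii^2}\le p^{d/2}\sqrt{\sum_\ii a_\ii^2}.
\]
For the lower bound in the same range, Theorem \ref{gaushiper} (hypercontractivity) gives $\|S\|_2\le C(d)\|S\|_p$, so
\[
\sqrt{p}\,\sqrt{\sum_\ii a_\ii^2}\le \sqrt{2}\,\|S\|_2\le C(d)\,\|S\|_p.
\]

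There is no real obstacle: once Theorem \ref{momrealgaus} is granted, the Cauchy--Schwarz majorization is immediate, the dominating partition for the upper bound is the one into singletons, and the extremal partition for the lower bound is the trivial one. The only mildly delicate point is the handling of $p\in[1,2)$, for which hypercontractivity is sufficient.
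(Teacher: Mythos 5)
Your proposal is correct and follows essentially the same route as the paper, which simply cites Theorems \ref{gaushiper} and \ref{momrealgaus}; you have supplied the details (the Cauchy--Schwarz majorization of the partition suprema, the single-block term for the lower bound, and hypercontractivity plus the $L^2$ identity for $p\in[1,2)$) that the paper leaves implicit.
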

\begin{proof}
It is an easy consequence of Theorems \ref{gaushiper} and \ref{momrealgaus}.
\end{proof}

\begin{lem}\cite[Lemma 9.5]{wyk} \label{lem95}
Let $Y^{(1)}_i$ be independent standard symmetric exponential variables (variables with density $2^{-1}\exp(-|t|)$) and $Y^{(2)}_i=g^2_i$, $Y^{(3)}_i= g_ig'_i$, where
$g_i,g'_i$ are i.i.d. $\mathcal{N}(0,1)$ variables and $\eps_i$ – i.i.d. Rademacher variables independent of $(Y^{(1)}),\ (Y^{(2)}),\ (Y^{(3)})$. Then for any Banach space $(F,\lv \cdot \rv)$ and any vectors $v_1,\ldots,v_n \in F$ the quantities
$$
\E \lv \sum_i v_i \eps_i Y^{(j)}_i \rv, \quad j=1,2,3,
$$
are comparable up to universal multiplicative factors.
\end{lem}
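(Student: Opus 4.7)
The plan is to reduce the three comparisons to identities that realize $Y\ub{1}_i$ as a function of independent standard Gaussians. Two such representations suffice:
\begin{enumerate}
\item[(i)] $Y\ub{1} \stackrel{d}{=} g^1 g^2 + g^3 g^4$, where $g^1,g^2,g^3,g^4$ are independent standard normals. Indeed, writing $h_1 = \tfrac{1}{\sqrt{2}}(g^1 + g^2)$, $h_2 = \tfrac{1}{\sqrt{2}}(g^1-g^2)$ (and analogously $h_3, h_4$ from $g^3,g^4$), one has $g^1g^2 = \tfrac12(h_1^2-h_2^2)$, so $g^1g^2+g^3g^4 = \tfrac12(h_1^2+h_3^2) - \tfrac12(h_2^2+h_4^2) \stackrel{d}{=} E-E' \stackrel{d}{=} Y\ub{1}$, where $E,E'$ are independent standard exponentials.
\item[(ii)] $Y\ub{1} \stackrel{d}{=} \varepsilon \cdot \tfrac12(g^2+(g')^2)$, immediate from the fact that $(g^2+(g')^2)/2$ is a standard exponential and $\varepsilon$ is an independent Rademacher.
\end{enumerate}

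Write $A := \E\lv \sum_i v_i Y\ub{1}_i\rv$, $B := \E\lv\sum_i v_i \varepsilon_i g_i^2\rv$, $C := \E\lv\sum_i v_i g_i g'_i\rv$. Applying (i) coordinatewise to independent copies yields $\sum_i v_i Y\ub{1}_i \stackrel{d}{=} X + X'$, where $X, X'$ are independent copies of $\sum_i v_i g_i g'_i$, both mean zero. The triangle inequality gives $A \le 2C$, while Jensen's inequality applied conditionally on $X$ (inserting the independent mean-zero $X'$ inside the norm) gives $C = \E\lv X\rv \le \E\lv X+X'\rv = A$. Similarly, (ii) coordinatewise yields $\sum_i v_i Y\ub{1}_i \stackrel{d}{=} \tfrac12 \sum_i v_i \varepsilon_i g_i^2 + \tfrac12 \sum_i v_i \varepsilon_i (g'_i)^2$; since the two summands are identically distributed, the triangle inequality produces $A \le B$.

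The remaining inequality $B \le cA$ is the hard one, and I would prove it by symmetrization. Let $Z_i := \varepsilon_i g_i^2$ and let $Z'_i := \varepsilon'_i (g'_i)^2$ be an independent copy. Since $\E Z_i = 0$, Jensen yields $B \le \E\lv\sum_i v_i(Z_i - Z'_i)\rv$, which by the symmetry $Z_i \stackrel{d}{=} -Z_i$ equals $\E\lv\sum_i v_i(Z_i + Z'_i)\rv$. The key observation is that conditioning on $\eta_i := \varepsilon_i\varepsilon'_i$ identifies $Z_i+Z'_i$ with a mixture: when $\eta_i = +1$, one has $Z_i + Z'_i = \varepsilon_i(g_i^2+(g'_i)^2) \stackrel{d}{=} 2 Y\ub{1}_i$ by (ii); when $\eta_i = -1$, one has $Z_i + Z'_i = \varepsilon_i(g_i^2-(g'_i)^2) \stackrel{d}{=} 2 g_i g'_i$, using that $g_ig'_i \stackrel{d}{=} \tfrac12(g^2-(g')^2)$ and that $g_ig'_i$ is symmetric. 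Introducing fresh independent copies $W\ub{1}_i \sim Y\ub{1}_i$ and $W\ub{3}_i \sim g_i g'_i$, globally independent of the $\eta_j$'s, these conditional identities assemble into
\[
(Z_i+Z'_i)_i \stackrel{d}{=} \bigl((1+\eta_i) W\ub{1}_i + (1-\eta_i) W\ub{3}_i\bigr)_i.
\]
Splitting with the triangle inequality and using the symmetry identities $(\eta_i W\ub{1}_i)_i \stackrel{d}{=} (W\ub{1}_i)_i$ and $(\eta_i W\ub{3}_i)_i \stackrel{d}{=} (W\ub{3}_i)_i$ produces $B \le 2A + 2C$; combining with $C \le A$ already shown yields $B \le 4A$.

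The main obstacle is the last step, specifically verifying that the mixture representation of $(Z_i+Z'_i)_i$ assembles correctly into a joint distributional equality across all coordinates $i$. This reduces to observing that the families $\{(\varepsilon_i, \varepsilon'_i, g_i, g'_i)\}_i$ are independent across $i$, which in turn implies the products $\eta_i$ are independent Rademachers and the mixture components can be realized by a single global coupling. All remaining manipulations are routine applications of Jensen's inequality, the triangle inequality, and symmetrization.
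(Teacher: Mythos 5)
Your proof is correct, and it takes a genuinely different route from the paper, which does not prove the lemma at all: it simply cites \cite[Lemma 9.5]{wyk} and remarks that the proof given there, based on the contraction principle, carries over verbatim from the real-valued to the Banach-valued setting. Your argument instead rests on explicit distributional identities for the Laplace law --- $Y^{(1)}\stackrel{d}{=}g^1g^2+g^3g^4$ and $Y^{(1)}\stackrel{d}{=}\eps\cdot\tfrac12(g^2+(g')^2)$, both of which check out via the rotation $h_{1,2}=(g^1\pm g^2)/\sqrt2$ and the fact that a chi-squared with two degrees of freedom is an exponential of mean $2$ --- combined with the triangle inequality, Jensen's inequality for removing an independent mean-zero summand, and a symmetrization step. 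All the individual steps are sound: $C\le A\le 2C$ follows from identity (i); $A\le B$ from identity (ii); and the harder bound $B\le 2A+2C\le 4A$ from symmetrizing $\eps_ig_i^2$ and splitting on the sign of $\eta_i=\eps_i\eps_i'$, where the mixture representation across coordinates is legitimate precisely because the blocks $(\eps_i,\eps_i',g_i,g_i')$ are independent over $i$, so conditional independence given $(\eta_i)_i$ is preserved on both sides of the claimed joint equality in distribution. What your approach buys is a short, self-contained, purely elementary proof with explicit constants ($C\le A\le 2C$ and $A\le B\le 4A$), valid in any Banach space without invoking the contraction principle; what the cited proof buys is a template that extends to the more general log-concave chaoses treated in \cite{wyk}, of which this lemma is a special case.
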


We remark that the above lemma is formulated in \cite{wyk} in the real valued case, however the proof presented there (based on the contraction principle) works in arbitrary Banach spaces.

We will also need decoupling inequalities for tetrahedral homogeneous polynomials. Such inequalities were introduced for the first time in \cite{McConnell-Taqqu} for real valued multi-linear forms and since then have been strengthened and generalized by many authors (see the monograph \cite{7}). The following theorem is a special case of results from \cite{kwa} (treating also general tetrahedral polynomials) and \cite{delaPena-decoupling, pen} (treating general $U$-statistics).

\begin{twr}\label{thm:decoupling}  Let $X = (X_1,\ldots,X_n)$ be a sequence of independent random variables and let $X^l = (X^l_1,\ldots,X^l_n)$, $l=1,\ldots,d$,  be i.i.d. copies of $X$. Consider a $d$-indexed symmetric matrix $(a_{i_1,\ldots,i_d})_{i_1,\ldots,i_d=1}^n$, with coefficients from a Banach space $F$. Assume that $a_{i_1,\ldots,i_d} = 0$ whenever there exist $1 \le k< m \le d$ such that $i_k = i_m$. Then for any $p \ge 1$,
\begin{displaymath}
  \Big\|\sum_{i_1,\ldots,i_d = 1}^n a_{i_1,\ldots,i_d}X_{i_1}\cdots X_{i_d}\Big\|_p \sim_{d} \Big\|\sum_{i_1,\ldots,i_d = 1}^n a_{i_1,\ldots,i_d} X^{1}_{i_1}\cdots X^{d}_{i_d}\Big\|_p.
\end{displaymath}
\end{twr}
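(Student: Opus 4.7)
The plan is to prove the two-sided equivalence by a randomization argument that reduces the problem to applications of Jensen's inequality and the Kahane contraction principle; this is the standard approach of Kwapie\'n \cite{kwa} (see also \cite{delaPena-decoupling, pen}), and the extension from the real-valued to the Banach-valued setting causes no additional difficulty, because the tools involved -- Jensen's inequality, conditioning, and the contraction principle -- apply to arbitrary symmetric convex functions on a Banach space.

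The first step is to introduce a uniform random partition of $[n]$ into $d$ groups. Let $\delta_1,\ldots,\delta_n$ be i.i.d.\ uniform on $\{1,\ldots,d\}$, independent of $X,X^1,\ldots,X^d$; set $\delta^k_i := \mathbf{1}_{\{\delta_i = k\}}$ and $I_k := \{i:\delta_i = k\}$, and consider the thinned chaoses
\begin{align*}
T := \sum_{i_1,\ldots,i_d} a_{i_1,\ldots,i_d}\, \delta^1_{i_1}\cdots\delta^d_{i_d}\, X_{i_1}\cdots X_{i_d}, \qquad T' := \sum_{i_1,\ldots,i_d} a_{i_1,\ldots,i_d}\, \delta^1_{i_1}\cdots\delta^d_{i_d}\, X^1_{i_1}\cdots X^d_{i_d}.
\end{align*}
Since $\delta^k_i \delta^{k'}_i = 0$ for $k\neq k'$, the expectation $\E_\delta[\delta^1_{i_1}\cdots\delta^d_{i_d}]$ equals $d^{-d}$ when $i_1,\ldots,i_d$ are pairwise distinct and $0$ otherwise; combined with the zero-diagonal hypothesis this gives $\E_\delta T = d^{-d}S$ and $\E_\delta T' = d^{-d}S'$, where $S$ and $S'$ denote the coupled and decoupled sums. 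Jensen's inequality therefore yields $\|S\|_p\le d^d\|T\|_p$ and $\|S'\|_p\le d^d\|T'\|_p$. Moreover $T\stackrel{d}{=}T'$: conditionally on $\delta$ the sum $T$ only uses $(X_i)_{i\in I_k}$ in slot $k$, and since the $I_k$'s are pairwise disjoint and $(X_i)$ is i.i.d., replacing these $d$ independent blocks by the corresponding blocks of $(X^k_i)$ does not alter the joint conditional law. Hence $\|T\|_p = \|T'\|_p$.

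It remains to bound $\|T'\|_p$ in terms of $\|S'\|_p$ and $\|T\|_p$ in terms of $\|S\|_p$. For the first bound I would condition on $\delta$ and on $(X^j)_{j\ne k}$ and note that $T'$ becomes a linear combination of the independent symmetric family $(X^k_i)_i$ with $F$-valued coefficients scaled by $\delta^k_{i_k}\in[0,1]$; the Kahane contraction principle removes this scaling, and iterating the argument over $k=1,\ldots,d$ gives $\|T'\|_p \le \|S'\|_p$, hence $\|S\|_p \le d^d \|S'\|_p$. The reverse inequality $\|S'\|_p \le C(d)\|S\|_p$ -- classically the harder ``decoupling'' direction -- is the main obstacle, since in $S$ the same variable $X_i$ appears in several slots and slot-by-slot contraction cannot be applied directly. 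The standard remedy, which I would follow, is to symmetrize $X_i\stackrel{d}{=}\eps_iX_i$ with an independent Rademacher sequence $(\eps_i)$; the zero-diagonal hypothesis guarantees that every surviving monomial of the resulting chaos involves $d$ distinct $\eps_{i_k}$'s, so conditioning on $(|X_i|)_i$ and iteratively applying the contraction principle in the sign variables produces $\|T\|_p \le C(d)\|S\|_p$, whence $\|S'\|_p \le d^d \|T'\|_p = d^d \|T\|_p \le C(d)\|S\|_p$.
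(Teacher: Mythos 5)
First, a point of reference: the paper does not prove this theorem at all --- it is quoted as known, as a special case of Kwapie\'n \cite{kwa} and de la Pe\~na--Montgomery-Smith \cite{delaPena-decoupling,pen} --- so your attempt has to be judged on its own. Your randomization skeleton (uniform random assignment $\delta$ of indices to slots, $\E_\delta T = d^{-d}S$, $\E_\delta T'=d^{-d}S'$, Jensen, and the distributional identity $T\stackrel{d}{=}T'$ from disjointness of the blocks) is sound, and it correctly yields the direction $\|S\|_p\le d^d\|S'\|_p$, modulo the fact that you repeatedly use symmetry of the $X_i$'s, which the statement does not assume (it does hold in the paper's only application, to Gaussian vectors, and can be arranged by a routine symmetrization, but you should say so).

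The genuine gap is in the hard direction, which you have compressed into the single claim $\|T\|_p\le C(d)\|S\|_p$, justified by ``symmetrize and iteratively apply the contraction principle in the sign variables.'' Conditionally on $(|X_i|)_i$ and $\delta$, $T$ is the coupled Rademacher chaos $S$ with its coefficients multiplied by $m_{\mathbf i}=\delta^1_{i_1}\cdots\delta^d_{i_d}$, and this multiplier is \emph{slot-dependent}: a different sequence $\delta^k$ acts in each slot. The contraction principle for a coupled tetrahedral chaos only removes multipliers of the form $\prod_k\mu_{i_k}$ with a \emph{single} sequence $\mu\in[-1,1]^n$ (write $\mu_i=\E\theta_i$ with $\theta_i\in\{-1,1\}$, use distinctness of the indices to get $\prod_k\mu_{i_k}=\E\prod_k\theta_{i_k}$, then Jensen and $(\theta_i\eps_i)_i\stackrel{d}{=}(\eps_i)_i$). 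A slot-by-slot iteration, which works for $T'$ because the slots carry independent randomness, is impossible for $T$: every $\eps_i$ appears in every slot, so the vectors multiplying $\eps_i$ in the slot being contracted are not fixed under any admissible conditioning. Removing a slot-dependent multiplier from a coupled chaos is precisely the content of the decoupling inequality, so as written this step is circular. It can be repaired, and this is the missing key idea: since the matrix $a$ is symmetric and the blocks $I_k=\{i:\delta_i=k\}$ are pairwise disjoint, the polarization (inclusion--exclusion) identity $\sum_{\sigma}\prod_{k=1}^d\mathbf 1_{I_{\sigma(k)}}(i_k)=\sum_{S\subset[d]}(-1)^{d-|S|}\prod_{k=1}^d\mathbf 1_{\bigcup_{l\in S}I_l}(i_k)$ (the sum over all permutations $\sigma$ of $[d]$) expresses $d!\,T$ as an alternating sum of $2^d$ chaoses, each carrying a single-sequence multiplier $\prod_k\mathbf 1_{K}(i_k)$ and hence each a genuine contraction of $S$; this gives $\|T\|_p\le\tfrac{2^d}{d!}\|S\|_p$ and closes your argument. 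Without that identity (or an equivalent device, such as Kwapie\'n's representation of the decoupled sum as a conditional expectation of a coupled sum in random mixtures $Z_i$ of the copies $X^1_i,\ldots,X^d_i$), the hard direction is not proved.
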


\end{document}